\title{Chow groups with twisted coefficients  }
\author{Burt Totaro}
\date{  }
\def\Z{\text{\bf Z}}
\def\Q{\text{\bf Q}}
\def\C{\text{\bf C}}
\def\P{\text{\bf P}}
\def\F{\text{\bf F}}
\def\inj{\hookrightarrow}
\def\imp{\Rightarrow}
\def\etale{\'etale }
\DeclareMathOperator{\et}{et}
\DeclareMathOperator{\Hom}{Hom}
\DeclareMathOperator{\Spec}{Spec}
\DeclareMathOperator{\ev}{ev}
\DeclareMathOperator{\tr}{tr}
\DeclareMathOperator{\res}{res}
\DeclareMathOperator{\im}{im}
\DeclareMathOperator{\Ind}{Ind}
\DeclareMathOperator{\codim}{codim}
\DeclareMathOperator{\Sm}{Sm}
\DeclareMathOperator{\T}{T}
\DeclareMathOperator{\M}{M}
\DeclareMathOperator{\SH}{SH}
\DeclareMathOperator{\HH}{H}
\DeclareMathOperator{\h}{\mathcal{H}}
\DeclareMathOperator{\map}{map}
\DeclareMathOperator{\fr}{fr}
\DeclareMathOperator{\Gal}{Gal}
\DeclareMathOperator{\nr}{nr}
\DeclareMathOperator{\Tor}{Tor}
\DeclareMathOperator{\Zar}{Zar}
\DeclareMathOperator{\sing}{sing}
\DeclareMathOperator{\Sh}{Sh}
\DeclareMathOperator{\coker}{coker}
\begin{document}
\maketitle
\newtheorem{theorem}{Theorem}[section]
\newtheorem{corollary}[theorem]{Corollary}
\newtheorem{lemma}[theorem]{Lemma}

\theoremstyle{definition}
\newtheorem{definition}[theorem]{Definition}
\newtheorem{example}[theorem]{Example}
\newtheorem{question}[theorem]{Question}
\newtheorem{conjecture}[theorem]{Conjecture}

\theoremstyle{remark}
\newtheorem{remark}[theorem]{Remark}

It is natural to ask whether we can define
the Chow group of algebraic cycles
with twisted coefficients, as we can do
for cohomology. Rost
gave such a definition. Namely, he defined Chow groups
with coefficients in a locally constant \etale sheaf $E$, assuming
that $E$ is torsion of exponent invertible in the base field $k$
\cite[Remarks 1.11 and 2.5]{Rost}. We generalize the definition
so that $E$ need not be torsion (and, in characteristic $p>0$,
$p$ need not act invertibly on $E$).
It was not obvious how to make such a definition,
because Chow groups
are defined in terms of the Zariski topology,
not the \etale topology. For the constant sheaf $\Z_X$,
the twisted Chow group $CH_i(X,\Z_X)$ is the usual Chow group $CH_iX$.

Chow groups with twisted coefficients
have hardly been studied at all,
although Rost's more general notion of cycle modules has been influential.
The main goal of this paper is to present some methods
for computing twisted Chow groups, in the hope of making these
groups more useful in practice. This continues the grand theme
of importing ideas from homotopy theory into algebraic geometry,
in order to understand torsion phenomena.

In particular, twisted Chow groups
are directly related to Serre's notion
of ``negligible cohomology'' for finite groups
\cite[section 26]{GMS}. This paper
was prompted by a remarkable computation of negligible cohomology
by Merkurjev and Scavia \cite{MS}, which we generalize in terms
of twisted Chow groups (Theorem \ref{transfer}).
In short, twisted Chow groups are always generated by the Chow
groups of suitable covering spaces. (For now, we are not giving
new calculations of the usual Chow groups.) Twisted Chow groups
tensored with the rationals can easily be computed in terms of
the Chow groups of suitable covering spaces (Lemma \ref{rational}),
and so their main novelty is integral
or modulo a prime number.

Guillot, Di Lorenzo, and Pirisi defined equivariant Chow groups
with coefficients in any cycle module \cite{Guillot, DLP}.
In particular, that gives
a definition of equivariant Chow groups with coefficients,
$CH^*_G(X,M)$ for a $\Z G$-module
$M$ (section \ref{equivariant}).
An interesting special case is when $X$ is a point; then we get
a definition of twisted Chow groups of the classifying space
of $G$, $CH^*(BG_k,M)$. (These groups
map to the cohomology of $G$ with coefficients
in $M$.) We prove some general bounds for generators of $CH^*(BG_k,M)$
(Theorems \ref{euler} and \ref{degreebound}).
We give complete calculations of these invariants (for all $G$-modules)
when $G$ is cyclic, quaternion, or the group $\Z/2\times\Z/2$
(Theorems \ref{cyclic}, \ref{quaternion}, \ref{klein}). The results
are related to group cohomology, but with
some striking differences (Remarks \ref{nobound}
and \ref{kleinremark}).

Heller, Voineagu, and \O stv\ae r have defined
twisted motivic cohomology \cite{HVO}. We reformulate
the definition and provide some computational tools,
such as relating twisted motivic
cohomology with twisted higher Chow groups
(section \ref{motivic}). Surprisingly,
there is a surjection from $H^{2i}_{\M}(X,E(i))$ to the twisted
Chow group $CH^i(X,E)$, but it is not always an isomorphism
(Theorems \ref{surjective} and \ref{counterexample}).
In the example, the monodromy group of $E$ is $\Z/2\times \Z/2$.
The example depends on relating twisted Chow groups
with the theory of algebraic tori, such as the notion
of coflasque resolutions.

I believe that twisted Chow groups
and twisted motivic cohomology
are both worth investigating.
Each theory has its own advantages (Remark \ref{comparison}).
More generally, we conjecture a definition of Chow groups twisted
by any birational sheaf with transfers, in the sense
of Kahn--Sujatha (Conjecture \ref{cyclemodule}).

This work was supported by NSF grant DMS-2054553 and
Simons Foundation grant SFI-MPS-SFM-00005512.
Thanks to Alexander Merkurjev and Federico Scavia
for useful conversations.

\section{Definition}
\label{definition}

We start by recalling Rost's definition of Chow groups
with twisted coefficients in the torsion case. The definition mixes
the Zariski and \etale topologies in a nontrivial way.

\begin{definition}
\label{twisted}
Let $X$ be a separated scheme of finite type
over a field $k$. Let $E$ be a locally constant \etale sheaf
on $X$ which is killed by a positive integer $r$ that is
invertible in $k$. Then $CH_i(X,E)$ is defined to be the cokernel
of the residue map on Galois cohomology groups:
$$\oplus_{x\in X_{(i+1)}} H^1(k(x),E(1))
\to \oplus_{x\in X_{(i)}} H^0(k(x),E).$$
Here $E(1)$ means the Galois module $E\otimes_{\Z/r}\mu_r$, where
$\mu_r$ denotes the $r$th roots of unity in a separable closure of $k$.
Also, $X_{(i)}$
means the set of points of the scheme $X$ whose closure has dimension $i$.
\end{definition}

For $E=(\Z/r)_X$, we have $H^1(k(x),E(1))\cong H^1(k(x),\mu_r)\cong
k(x)^*/(k(x)^*)^r$, and so $CH_i(X,\Z/r)$ is the usual Chow group
modulo $r$, $CH_i(X)/r$. {\it Throughout the paper, cohomology of fields
with no topology specified
will denote Galois cohomology (or equivalently, \etale cohomology).}
Schemes of finite type over a field will be assumed to be separated.

Even though the definition involves \etale cohomology of fields,
the twisted Chow groups of $X$
cannot be viewed as \etale cohomology of $X$,
regardless of the choice of coefficients.
In particular, for $X$ smooth over $k$, there is a natural homomorphism
from $CH^i(X,E)$ to \etale motivic cohomology $H^{2i}_{\et}(X,E(i))$
(Theorem \ref{etalecycle}), and this is an isomorphism rationally,
but not integrally. For example,
when $k$ is the complex numbers $\C$,
\etale motivic cohomology $H^{2i}_{\et}(X,\Z/r(i))$ can
be identified with ordinary cohomology, $H^{2i}(X(\C),\Z/r)$. That is
usually quite different from $CH^i(X,\Z/r)=CH^i(X)/r$.

In this paper, we generalize the definition of twisted Chow groups to any
locally constant \etale sheaf $E$ of abelian groups, not necessarily torsion,
as follows.

\begin{definition}
Let $X$ be a separated scheme of finite type
over a field $k$. Let $E$ be a locally constant \etale sheaf
on $X$. Then $CH_i(X,E)$ is defined to be the cokernel
of the residue map on Galois cohomology groups:
$$\oplus_{x\in X_{(i+1)}} H^1(k(x),E(1))
\to \oplus_{x\in X_{(i)}} H^0(k(x),E).$$
Here we interpret $\Z(1)$
as $G_m[-1]$, a shift of the multiplicative group
in the derived category of \etale sheaves over a field,
as in Voevodsky's theory of motivic cohomology
\cite[Theorem 4.1]{MVW}. Define
$E(1)$ as the derived tensor product
$E\otimes_{\Z}^L G_m[-1]$.
\end{definition}

This agrees with Rost's definition (Definition \ref{twisted})
when $E$ is torsion
of exponent invertible in $k$. Also, when $E$ is the constant \etale sheaf
$\Z_X$, $CH_i(X,\Z_X)$ is the usual Chow group $CH_iX$. That makes it quite
natural to consider non-torsion coefficients.
See also Conjecture \ref{cyclemodule} for a more general
attempt to define Chow groups twisted by any birational motivic sheaf.

\begin{remark}
Define an \etale sheaf $E$ of abelian groups on a scheme $X$ to be {\it naively
locally constant }if there is
an \etale covering $\{ X_{\alpha}\to X\}$ on which $E$
is constant. For convenience, define an \etale sheaf
to be {\it locally constant }if it is a direct limit of naively locally
constant sheaves. For $X$ connected with a choice of a geometric base point,
locally constant
sheaves in this sense correspond to discrete abelian groups $M$
with a continuous action
of $\pi_1^{\et}X$, whereas naively locally constant sheaves correspond
to the special case where $M$ is fixed by
some open subgroup of $\pi_1^{\et}X$.
\end{remark}

Rost's arguments imply essentially all the formal properties
of Chow groups with twisted coefficients, although we need
an extra argument (Corollary \ref{residue}) to avoid inverting
the exponential characteristic $e$ of $k$. (By definition, $e=1$
if $k$ has characteristic zero, and $e=p$ if $k$ has characteristic
$p>0$.)
Let us see how this works. The first step is to observe
that a locally constant \etale sheaf $E$
on a scheme $X$ of finite
type over $k$ determines
a cycle module $H^*[E]$ on $X$. To describe what this means, first define
a {\it field over }$X$ to be
a field $F$ with
a morphism $\Spec F\to X$ such that $F$ is finitely generated
over $k$. Then a {\it cycle module }$M$
on $X$ is a $\Z$-graded abelian group $M(F)$
associated to every field over $X$,
along with various operations on these groups. In our case,
given a locally constant \etale sheaf $E$ on $X$, we define
$H^*[E](F)$ as the \etale cohomology groups:
$$H^*[E](F)=\oplus_{j\geq 0} H^j(F,E(j)).$$
Here $\Z(j)$ denotes the \'etale sheafification of Voevodsky's
motivic cohomology complex, and $E(j)=E\otimes_{\Z}^L \Z(j)$. In particular,
if $E$ is torsion of exponent $r$ invertible in $k$, then
we have the more elementary description $E(j)\cong E\otimes_{\Z/r}
\mu_r^{\otimes j}$ in $D(X_{\et})$, giving the description
of the cycle module $H^*[E]$ from Rost's paper
\cite[Remark 1.11]{Rost}.

For clarity, we recall the operations required of a cycle module.
First, for each inclusion $\varphi\colon F_1\to F_2$ of fields over $X$,
we are given a homomorphism $\varphi_*\colon M(F_1)\to M(F_2)$ of degree 0
(a ``pullback'' homomorphism, in geometric language). For each
finite extension $\varphi\colon F_1\to F_2$ of fields over $X$, we have
a ``transfer'' or ``pushforward''
homomorphism $\varphi^*\colon M(F_2)\to M(F_1)$ of degree 0.
For each field $F$ over $X$,
the group $M(F)$ is a graded left module over the Milnor
$K$-theory ring $K_*^MF$. Finally, suppose that a field $F$ has a
``valuation over $X$'', meaning a discrete valuation $v$
together with a morphism
$\Spec O_v\to X$ such that $O_v$ is the local ring of a normal variety
over $X$ at a point of codimension 1.
Then we are given a ``residue'' homomorphism $\partial_v\colon
M(F)\to M(k(v))$ of degree $-1$, where $k(v)$ is the residue
field of $v$. We omit the relations that these operations are required
to satisfy, for $M$ to be called
a cycle module \cite[definitions 1.2 and 2.1]{Rost}.

We need the \etale sheaf $E$ on $X$ to be locally constant
in order to define the residue homomorphisms on $H^*[E]$.
Namely,
for a discrete valuation $v$ over $X$ on a field $F$ with residue field
$k(v)$, $E$ pulls back
to a locally constant \etale sheaf on $\Spec O_v$, and that is the situation
in which we have a residue homomorphism on \etale motivic cohomology:
$$\partial_v\colon H^b(F,E(b))\to H^{b-1}(k(v),E(b-1)).$$
This is easier to construct if $E$ is a $\Z[1/e]$-module; then
it comes from an isomorphism in the derived category of \etale
sheaves on $\Spec k(v)$:
$$\Z[1/e](b-1)_{k(v)}[-2]\cong i^{!}\Z[1/e](b)_{O_v}$$
for any $b\geq 1$, where $i\colon \Spec k(v)\to \Spec O_v$
is the inclusion and $i^!$ is the exceptional inverse image functor
\cite[proof of Proposition 7.1.10]{CDetale}. This fits into the localization
sequence for \etale motivic cohomology due to Cisinski and D\'eglise:
for a regular closed subscheme $Y$ of codimension $r$ in
a regular excellent scheme $X$, we have a long exact sequence
\cite[Theorem 5.6.2 and Proposition 7.1.6]{CDetale}:
\begin{multline*}
H^{i-2r}_{\et}(Y,\Z[1/e](j-r))\to H^i_{\et}(X,\Z[1/e](j))
\to H^i_{\et}(X-Y,\Z[1/e](j))\\
\to H^{i-2r+1}_{\et}(Y,\Z[1/e](j-r)).
\end{multline*}
Here $\Z[1/e](i)$ is the \etale sheafification of the usual motivic cohomology
complex (with $e$ inverted)
for $i\geq 0$, whereas it is torsion for $i<0$: $\Z[1/e](i)\cong
\oplus_{l\neq e}\Q_l/\Z_l(i)[-1]$ for $i<0$.

In characteristic $p>0$,
the residue map in the derived category of \etale sheaves
does not exist without inverting $e=p$.
Nonetheless, we construct a residue homomorphism
$$\partial_v\colon H^b(F,E(b))\to H^{b-1}(k(v),E(b-1))$$
for a locally constant \etale sheaf $E$ on $\Spec O_v$
(without inverting $p$) in Corollary \ref{residue}.
As a result, $H^*[E]$ is a cycle module,
and so Chow groups with twisted coefficients satisfy
the desired properties without having to invert
the exponential characteristic.

Once we know that $H^*[E]$ is a cycle module,
Rost's theory implies essentially all the formal properties
one would want for Chow groups with twisted coefficients. 
We have:

\begin{itemize}
\item Proper pushforward. For a proper morphism $f\colon X\to Y$
of schemes over $k$ and a locally constant \etale sheaf $E$ on $Y$,
we have a homomorphism
$$f_*\colon CH_i(X,f^*E)\to CH_i(Y,E)$$
\cite[section 5]{Rost}.
\item Flat pullback. For a flat morphism $f\colon X\to Y$
of relative dimension $n$ and a locally constant \etale sheaf $E$
on $Y$, we have a homomorphism
$$f^*\colon CH_i(Y,E)\to CH_{i+n}(X,f^*E)$$
\cite[section 5]{Rost}.
\item Localization sequence. For a closed subscheme $Z\subset X$
and a locally constant \etale sheaf $E$ on $X$, we have an exact
sequence
$$CH_i(Z,E)\to CH_i(X,E)\to CH_i(X-Z,E)\to 0.$$
This sequence can be extended to the left,
using the cycle module $H^*[E]$. Writing
$C_i(X,E)_j$ for Rost's $C_i(X;H^*[E],j)$, we define
$$C_i(X,E)_j=\oplus_{x\in X_{(i)}} H^{i+j}(k(x),E(i+j)).$$
Let $A_i(X,E)_j$ be the homology of the boundary maps on these groups,
$$C_{i+1}(X,E)_j\to C_i(X,E)_j\to C_{i-1}(X,E)_j.$$
Then the localization sequence extends to a long exact sequence
\cite[section 5]{Rost}:
\begin{multline*}
\cdots \to A_{i+1}(X-Z,E)_{-i}\\
\to A_i(Z,E)_{-i}
\to A_i(X,E)_{-i}
\to A_i(X-Z,E)_{-i}\to 0.
\end{multline*}
\item Homotopy invariance.
For an affine bundle $\pi\colon V\to X$ of relative
dimension $n$ and a locally constant \etale sheaf $E$ on $X$,
the pullback
$$\pi^*\colon CH_i(X,E)\to CH_{i+n}(V,\pi^*E)$$
is an isomorphism \cite[Proposition 8.6]{Rost}.
\item Products. For a smooth scheme $X$ over $k$, 
write $CH^i(X,E)$ for the codimension-$i$ Chow group
with coefficients. (Thus, if $X$ has dimension $n$ everywhere,
we have $CH^i(X,E)=CH_{n-i}(X,E)$.) Then $CH^*(X,E)$ is a module
over the usual Chow ring $CH^*X$ \cite[section 14]{Rost}.
\item Pullback for smooth schemes. For any morphism
$f\colon X\to Y$ of smooth schemes over $k$ and a
locally constant \etale sheaf $E$ on $Y$,
we have a homomorphism
$$f^*\colon CH^i(Y,E)\to CH^i(X,f^*E)$$
\cite[section 12]{Rost}.
\end{itemize}

Rost also proves the expected compatibilities
among these operations. For the constant \etale sheaf $A_X$ associated
to an abelian group $A$, the operations above are
the usual operations on $CH_i(X,A_X)\cong CH_i(X)\otimes_{\Z}A$.

\section{Basic calculations}

We now give some basic calculations of Chow groups
with twisted coefficients, emphasizing cases in which
they reduce to the usual Chow groups.

\begin{lemma}
\label{induced}
Let $f\colon Y\to X$ be a finite \etale morphism of schemes
of finite type over a field $k$. For a locally constant \etale
sheaf $E$ on $Y$,
$$CH_i(X,f_*E)\cong CH_i(Y,E).$$
\end{lemma}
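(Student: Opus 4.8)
The plan is to prove the stronger statement that the whole cycle complex computing $CH_i(Y,E)$ is canonically isomorphic to the one computing $CH_i(X,f_*E)$, under the bijection between points of $Y$ and points of $X$ together with their preimages; the lemma then follows by passing to homology. (The resulting map is, for instance, the composite $CH_i(X,f_*E)\xrightarrow{f^*}CH_i(Y,f^*f_*E)\to CH_i(Y,E)$, using flat pullback along the étale map $f$ and functoriality of twisted Chow groups in the coefficient sheaf applied to the counit $f^*f_*E\to E$.)

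First I would set up the bookkeeping. Since $f$ is finite étale, it preserves dimensions of closures, so $Y_{(j)}=\coprod_{x\in X_{(j)}}f^{-1}(x)$ for every $j$, and for $x\in X_{(j)}$ the fiber $Y\times_X\Spec k(x)$ is $\coprod_{y\in f^{-1}(x)}\Spec k(y)$ with each $k(y)/k(x)$ finite separable. A finite morphism has exact pushforward on étale abelian sheaves with vanishing higher direct images, and $f_*$ commutes with base change; together with the projection formula for $f$ — using $f^*\Z(j)_X\cong\Z(j)_Y$, so that $(f_*E)(j)\cong f_*(E(j))$ — this yields, for $x\in X_{(j)}$ and all $m$,
$$H^m\big(k(x),(f_*E)(m)\big)\cong\bigoplus_{y\in f^{-1}(x)}H^m\big(k(y),E(m)\big).$$
Summing over $x\in X_{(j)}$ identifies $C_j(X,f_*E)_n$ with $C_j(Y,E)_n$ for all $j$ and $n$.

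Next I would check that this termwise identification intertwines the differentials of the two cycle complexes — the case needed for the lemma being the residue map $C_{i+1}(X,f_*E)_{-i}\to C_i(X,f_*E)_{-i}$, whose cokernel is $CH_i(X,f_*E)$. The differential has a component for each pair $x\in X_{(j+1)}$, $x'\in X_{(j)}$ with $x'\in\overline{\{x\}}$, built from residues at the codimension-one points of the normalization of $\overline{\{x\}}$ over $x'$. Because $f$ is étale, $f^{-1}(\overline{\{x\}})=\coprod_{y\in f^{-1}(x)}\overline{\{y\}}$, and for a valuation $v$ over $X$ with center $x'$ the induced maps $\Spec\mathcal O_{v'}\to\Spec\mathcal O_v$ are étale. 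So the compatibility reduces to the fact that the residue homomorphisms on étale motivic cohomology constructed in Corollary \ref{residue} are compatible with étale base change. Granting this, the cycle complexes are isomorphic, and taking homology gives $CH_i(X,f_*E)=A_i(X,f_*E)_{-i}\cong A_i(Y,E)_{-i}=CH_i(Y,E)$.

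I expect the compatibility of residues with the base change $f$ to be the only non-formal point — equivalently, the assertion that the cycle module $H^*[f_*E]$ on $X$ is the pushforward along $f$ of $H^*[E]$ on $Y$ — but it should follow at once from the construction in Corollary \ref{residue}, which is étale-local on $\Spec\mathcal O_v$. The other ingredients (dimension counting, exactness of finite pushforward and vanishing of $R^{>0}f_*$, the projection formula, Leray degeneration) are standard. I would also note that, $f$ being finite, $f_*$ commutes with filtered colimits of sheaves and $CH_i(-,-)$ commutes with filtered colimits of coefficients, so one may harmlessly assume $E$ naively locally constant — indeed constant on a finite étale cover of $Y$ — though the argument above does not require this reduction.
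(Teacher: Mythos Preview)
Your proof is correct; it is precisely the ``by hand'' argument that the paper alludes to but does not carry out. The paper instead invokes Rost's Leray--Serre spectral sequence
\[
E^2_{pq}=A_p(X,A_q[f;M])\Rightarrow A_{p+q}(Y;M),
\]
observes that for $f$ finite \'etale the fiber $Y_F$ over any field $F$ over $X$ is zero-dimensional, so $A_q[f;H^*[E]]$ vanishes for $q\neq 0$ and equals $H^*[f_*E]$ for $q=0$, and reads off the isomorphism from the degenerate spectral sequence. Your approach trades this machinery for an explicit identification of the cycle complexes term by term, which has the virtue of being self-contained and making the map completely transparent; the paper's approach has the virtue of being a one-line citation once Rost's spectral sequence is available, and of giving the full graded isomorphism $A_i(X,H^*[f_*E])\cong A_i(Y,H^*[E])$ at once. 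Note also that the identification $A_0[f;H^*[E]]\cong H^*[f_*E]$ which the paper asserts without comment is exactly the content of your termwise computation together with the residue compatibility, so in some sense your argument fills in what the paper leaves implicit.
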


Equivalently, Chow groups of $X$
with coefficients in an induced representation $E$
of the fundamental group reduce to Chow groups with coefficients
for a covering space of $X$. When $E$ is a permutation representation
of the \etale fundamental group
$\pi_1X$ (over some commutative ring $R$), $CH_i(X,E)$
is the usual Chow group $CH_i(Y)\otimes_{\Z} R$
of a covering space $Y$ of $X$ (possibly with several
connected components).

\begin{proof}
(Lemma \ref{induced})
One can prove this by hand, but an efficient approach is to use
Rost's results about an arbitrary morphism $f\colon Y\to X$
\cite[Corollary 8.2]{Rost}. Namely, for any cycle module $M$ on $Y$,
there is a convergent ``Leray-Serre'' spectral sequence
$$E^2_{pq} = A_p(X,A_q[f;M]) \imp A_{p+q}(Y;M),$$
for some cycle modules $A_q[f;M]$ on $X$. Namely, for each field
$F$ over $X$, let $Y_F=Y\times_X \Spec F$. Then we define
$$A_q[f;M](F) = A_q(Y_F;M).$$

Let $f\colon Y\to X$ be a finite \etale morphism, and let $E$
be a locally constant \etale sheaf on $Y$. In this case,
$Y_F$ has dimension 0
for each field $F$ over $X$. So we read off that
$$A_q[f;H^*[E]]\cong
\begin{cases} 0 &\text{if }q\neq 0\\
H^*[f_*E] &\text{if }q=0.
\end{cases} $$
Therefore, the spectral sequence reduces to an isomorphism
$A_i(X,H^*[f_*E])\cong A_i(Y,H^*[E])$ of graded abelian groups.
In degree $-i$, this gives that
$CH_i(X,f_*E)\cong CH_i(Y,E)$, as we want.
\end{proof}

Next, we consider the relation between Chow groups with twisted
coefficients and the usual Chow groups. We get a complete
answer with rational coefficients. Namely, let $G$ be a finite group,
$f\colon Y\to X$ an \etale $G$-torsor,
and $E$ a $\Z G$-module. Then we can view $E$ as a locally constant
\etale sheaf on $X$, and every sheaf associated to a representation
of $\pi_1X$ with finite image arises this way. In this situation,
we have the flat pullback homomorphism
$$CH_i(X,E)\to CH_i(Y,f^*E)=CH_i(Y)\otimes_{\Z}E.$$
Since $f^*E$ is a $G$-equivariant sheaf on $Y$, this homomorphism
lands in the $G$-fixed subgroup:
$$CH_i(X,E)\to (CH_i(Y)\otimes_{\Z}E)^G.$$
One may ask how close this is to an isomorphism; it is always
an isomorphism tensor $\Q$, as we will see.

Since $f$ is finite, we also have the pushforward homomorphism
$$CH_i(Y)\otimes_{\Z}E = CH_i(Y,f^*E)\to CH_i(X,E).$$
Using that $f^*E$ is a $G$-equivariant sheaf on $Y$,
this homomorphism factors through the coinvariants of $G$:
$$(CH_i(Y)\otimes_{\Z} E)_G\to CH_i(X,E).$$
Again, one may ask how close this is to an isomorphism.

\begin{lemma}
\label{rational}
Both homomorphisms above become isomorphisms tensor $\Q$.
\end{lemma}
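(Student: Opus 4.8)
The plan is to reduce the statement to two standard identities relating $f^*$ and $f_*$ for the finite \etale $G$-torsor $f\colon Y\to X$, and then to observe that the lemma becomes formal once $|G|$ is invertible.

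The first identity is $f_*f^* = |G|\cdot\mathrm{id}$ on $CH_i(X,E)$. This is the twisted version of the familiar fact that pushforward after flat pullback along a finite morphism is multiplication by the degree, and it holds already at the level of Rost's cycle complex: for a subvariety $V\subseteq X$ with function field $F$, the fibre $f^{-1}(V)$ is a disjoint union of subvarieties with generic points $y_i$ satisfying $\sum_i[k(y_i):F]=|G|$, and on $H^0(F,E)=E^{\Gamma_F}$ the composite $\sum_i\mathrm{cor}_{k(y_i)/F}\circ\res_{k(y_i)/F}$ is multiplication by $\sum_i[k(y_i):F]=|G|$. The second identity is $f^*f_* = \sum_{g\in G}g$ on $CH_i(Y,f^*E)$, where $g$ acts through the $G$-equivariant structure on $f^*E$ (deck transformations on $Y$ together with the module action on $E$). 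This follows from Rost's base-change compatibility applied to the Cartesian square with corners $Y\times_XY$, $Y$, $Y$, $X$, in which all four maps are finite \etale morphisms and in particular flat and proper: trivializing the torsor as $Y\times_XY\cong\coprod_{g\in G}Y$ identifies one projection with the identity on each copy and the other with the translation by $g$, so that $f^*f_* = (p_2)_*(p_1)^* = \sum_{g\in G}g$.

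Granting these, I would conclude as follows. Tensor everything with $\Q$, so that $|G|$ is a unit, and let $e=\tfrac1{|G|}\sum_{g\in G}g\in\Q G$, whose action on any $\Q G$-module realizes both its invariants and its coinvariants. The map $f^*_{\Q}$ is injective because $f_*f^*=|G|$ is invertible; by hypothesis it lands in $(CH_i(Y,f^*E)_{\Q})^G$, and it is onto this subgroup, since for $\beta$ $G$-invariant one has $f^*\big(\tfrac1{|G|}f_*\beta\big)=\tfrac1{|G|}\sum_{g}g\beta=\beta$. Hence $f^*_{\Q}$ is an isomorphism onto the invariants. Dually, $f_{*,\Q}$ is surjective because $f_*f^*=|G|$ is invertible, hence so is the induced map out of the coinvariants $(CH_i(Y,f^*E)_{\Q})_G$; and that map is injective, since $f_*\beta=0$ forces $\sum_{g}g\beta=f^*f_*\beta=0$, which, passed to the coinvariants, reads $|G|\,\bar\beta=0$, hence $\bar\beta=0$.

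The only point requiring genuine care is the second identity $f^*f_*=\sum_g g$. One must check that Rost's base-change theorem for cycle modules applies to the Cartesian square above (it does, since all four morphisms are finite \etale covers), that $H^*[f^*E]$ is the pullback cycle module of $H^*[E]$ so that $f^*$ on Chow groups is induced by a morphism of cycle modules, and above all that the two projections from $Y\times_XY$, under the chosen trivialization, induce on $CH_i(Y,f^*E)$ exactly the $G$-module structure appearing in the statement --- including its effect on the coefficient module $E$, i.e.\ that the equivariant structure on $f^*E$ is the diagonal one. Everything else is bookkeeping with the element $e$.
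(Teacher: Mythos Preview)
Your proposal is correct and follows essentially the same approach as the paper: both arguments rest on the identities $f_*f^*=|G|$ and $f^*f_*=\sum_{g\in G}g$, the latter obtained via base change for the Cartesian square $Y\times_X Y\cong Y\times G$, and then conclude by a formal argument once $|G|$ is inverted. The paper's write-up is slightly terser (noting that $\sum_g g$ acts as $|G|$ on the invariant subgroup and invoking that $A^G\to A_G$ is a rational isomorphism), but the substance is the same.
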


\begin{proof}
Consider the composition
$$CH_i(X,E)\to (CH_i(Y)\otimes E)^G\to (CH_i(Y)\otimes E)_G
\to CH_i(X,E).$$
This is $f_*f^*$ on twisted Chow groups, which is multiplication
by $|G|$, as one can check on generators. Next, consider
the composition
$$(CH_i(Y)\otimes E)^G\to (CH_i(Y)\otimes E)_G\to CH_i(X,E)
\to (CH_i(Y)\otimes E)^G.$$
This is $f^*f_*$, which is the trace $\sum_{g\in G}g^*$. (This follows
from the fact that proper pushforward commutes with flat pullback
\cite[Proposition 4.1(3)]{Rost},
applied to the Cartesian diagram
$$\xymatrix@R-10pt{
Y\times_X Y \ar[r] \ar[d] & Y\ar[d] \\
Y \ar[r] & X,
}$$
where $Y\times_X Y\cong Y\times G$.)

Since we are considering $f^*f_*$ on the $G$-invariant subgroup
of $CH_i(Y)\otimes E$, it follows that $f^*f_*$ is multiplication
by $|G|$ on this subgroup. Thus, tensoring with $\Q$, $f^*$ gives
an isomorphism
$$CH_i(X,E)\otimes\Q \cong (CH_i(Y)\otimes E)^G\otimes \Q,$$
as we want. Also, for any abelian group $A$ with an action of the finite
group $G$,
the natural homomorphism
$A^G\to A_G$ becomes an isomorphism tensor $\Q$. It follows
that $f_*$ also gives an isomorphism tensor $\Q$:
$$(CH_i(Y)\otimes E)_G\otimes\Q\cong CH_i(X,E)\otimes\Q.$$
\end{proof}

Finally, we observe that Chow groups with twisted coefficients
do not always have the exactness properties one might wish for,
by analogy with cohomology. For example, given an exact sequence
$0\to A\to B\to C\to 0$ of locally constant \etale sheaves on $X$,
we have a long exact sequence of \etale cohomology groups,
$$\cdots\to H^i_{\et}(X,A)\to H^i_{\et}(X,B)\to H^i_{\et}(X,C)
\to H^{i+1}_{\et}(X,A)\to\cdots.$$
(When $k=\C$, we also have the analogous sequence of ordinary
cohomology groups with twisted coefficients.)
For twisted Chow groups, we will give a partial substitute
in Theorem \ref{coflasque}.

\begin{lemma}
\label{negative}
Let $0\to A\to B\to C\to 0$ be a short exact sequence
of locally constant \etale sheaves on a scheme $X$
of finite type over $k$.
Then the complex $0\to CH_i(X,A)\to CH_i(X,B)\to CH_i(X,C)\to 0$
need not be exact at any of the three terms. There are counterexamples
with $X$ smooth over $k$.
\end{lemma}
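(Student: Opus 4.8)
The plan is to give, separately, a short exact sequence of locally constant sheaves on a smooth variety witnessing the failure of exactness at each of the three spots; the first two examples are elementary, and the one at the middle term is the substantive point.

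\emph{Failure at $CH_i(X,A)$ (injectivity).} Choose $X$ smooth having an $\ell$-torsion class in some Chow group $CH_iX$, e.g.\ an elliptic curve over $\C$ with $i=0$ (so $CH_0X\cong\Z\oplus X(\C)$ has $\ell$-torsion for every $\ell\ge 2$), and apply $CH_i(X,-)$ to the sequence of constant sheaves $0\to\Z_X\xrightarrow{\ell}\Z_X\to(\Z/\ell)_X\to 0$. Since $CH_i(X,\Z_X)=CH_iX$, the first map is multiplication by $\ell$ on $CH_iX$, which is not injective. (One checks that the rest of the complex, $CH_iX\xrightarrow{\ell}CH_iX\to CH_iX/\ell\to 0$, is exact, so this example fails only at the first spot.)

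\emph{Failure at $CH_i(X,C)$ (surjectivity).} Let $f\colon Y\to X$ be a connected finite \'etale morphism of degree $n\ge 2$ of smooth connected $k$-varieties, say $X=Y=\mathbf{G}_m$ with $f(z)=z^{n}$ ($n$ invertible in $k$). Apply $CH_i(X,-)$, with $i=\dim X$, to the trace sequence $0\to A\to f_*\Z_Y\xrightarrow{\tr}\Z_X\to 0$ (the trace is onto stalkwise, so this is exact; $A$ is the ``augmentation ideal'' local system). Since $X_{(i+1)}=\emptyset$, $CH_i(X,F)=H^0(k(X),F)$ for all locally constant $F$; thus $CH_i(X,\Z_X)=\Z$, while $CH_i(X,f_*\Z_Y)=H^0(k(X),f_*\Z_Y)$ is the invariants of a transitive permutation $\pi_1X$-module, namely $\Z\cdot(\text{fibre-sum})$, which the trace carries to $n$. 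So $CH_i(X,f_*\Z_Y)\to CH_i(X,\Z_X)$ is multiplication by $n$, not surjective. (Equivalently, by Lemma \ref{induced} this map is $f_*\colon CH_iY\to CH_iX$, sending $[Y]\mapsto n[X]$.) Here $CH_i(X,A)=H^0(k(X),A)=0$, so this example is in fact exact at its first two spots.

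\emph{Failure at $CH_i(X,B)$ — the main obstacle.} Unlike cohomology, $CH_i(X,-)$ has no long exact sequence: given $0\to A\to B\to C\to 0$ and $\beta\in CH_i(X,B)$ dying in $CH_i(X,C)$, unwinding the definition shows that the obstruction to $\beta$ lying in the image of $CH_i(X,A)$ is the image, under the connecting map of the triangle $A(1)\to B(1)\to C(1)\to A(1)[1]$, of a class of $\bigoplus_{x\in X_{(i+1)}}H^1(k(x),C(1))$ witnessing $\bar\beta=0$ in $CH_i(X,C)$; hence it lies in $\bigoplus_{x\in X_{(i+1)}}H^2(k(x),A(1))$. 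This group vanishes whenever the fields $k(x)$, $x\in X_{(i+1)}$, have cohomological dimension $\le 1$; over $\C$ that is the case for $i=\dim X$ (then $X_{(i+1)}=\emptyset$) and for $i=0$ ($k(x)$ then has transcendence degree $1$), and one checks the middle term is exact there. So a counterexample needs $\dim X\ge 2$ (over $\C$) and a sheaf $A$ with $H^2(k(\eta),A(1))\ne 0$ at the generic point $\eta$. I would take $X$ a smooth surface over $\C$ with an \'etale double cover $Y\to X$, let $\Z^-$ be the rank-one local system with monodromy the associated character $\pi_1X\to\Z/2\xrightarrow{\sim}\{\pm1\}$, and apply $CH^1(X,-)$ (that is, $CH_1(X,-)$) to $0\to\Z^-\xrightarrow{2}\Z^-\to\Z/2\to 0$ (here $\Z^-/2\Z^-\cong(\Z/2)_X$, as the monodromy is trivial mod $2$). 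One has $H^2(\C(X),\Z^-(1))\cong\ker\bigl(\operatorname{Br}\C(X)\to\operatorname{Br}\C(Y)\bigr)$, which is nonzero for a suitable cover. The delicate remaining step — and the main obstacle — is to arrange that this obstruction survives, i.e.\ that $CH^1(X,\Z^-)/2\to CH^1(X,\Z/2)=\operatorname{Pic}(X)/2$ fails to be injective; I expect this to require a careful study of the residue map on $H^1(\C(X),\Z^-(1))=\C(Y)^*/\C(X)^*$ and of how $CH^1X$ and $CH^1Y$ are related — plausibly via flasque/coflasque resolutions of the tori involved, in the spirit of the later parts of the paper. For such an $X$ one then gets $\alpha\in CH^1(X,\Z^-)$ that dies in $CH^1(X,\Z/2)$ yet is not divisible by $2$ in $CH^1(X,\Z^-)$, so the complex is not exact at the middle term.
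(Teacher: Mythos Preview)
Your first two counterexamples are correct. They differ from the paper's, which fixes a single exact sequence throughout --- $0\to\F_2\to\F_2[\Z/2]\to\F_2\to 0$, realized via an \'etale double cover $f\colon Y\to X$ --- so that by Lemma~\ref{induced} the complex in question is always $CH_i(X)/2\xrightarrow{f^*}CH_i(Y)/2\xrightarrow{f_*}CH_i(X)/2$, and all three failures come from varying the cover. Your injectivity example ($\ell$-torsion in $CH_0$ of a complex elliptic curve under $0\to\Z\xrightarrow{\ell}\Z\to\Z/\ell\to 0$) is arguably more elementary than the paper's; your surjectivity example is essentially the paper's.

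For the middle term you have a genuine gap, and you say so yourself (``the delicate remaining step\ldots\ I expect this to require a careful study\ldots''). Your obstruction analysis is correct and your conclusion that over $\C$ one needs $\dim X\ge 2$ is right, but it is exactly this self-imposed restriction to $\C$ that makes the problem hard. The paper's move is simply to change the base field: over $k=\Q$, the function field of a \emph{curve} already has cohomological dimension $>1$, so $i=0$ is back on the table. Concretely, take an elliptic curve $E/\Q$ with $(\Z/2)^2\subset E(\Q)$ and positive rank, set $Y=E\smallsetminus E[2]$ with $\Z/2$ acting by $\pm 1$, and $X=Y/(\Z/2)\cong\P^1_\Q$ minus four rational points. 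Then $CH_0X=0$ while $CH_0(Y)/2\cong (E(\Q)/E[2])\otimes\F_2\ne 0$, so the sequence $0\to CH_0(Y)/2\to 0$ is visibly inexact in the middle. No Brauer-group or coflasque-resolution analysis is needed. (A remark after the proof does supply a $\C$-surface example --- a K3 double cover $Y\to\P^2$ --- but again by computing $CH^1(X)/2\to CH^1(Y)/2\to CH^1(X)/2$ directly from Picard groups, not by chasing the obstruction class you identified.)
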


\begin{proof}
Let $G$ be the group $\Z/2$. The regular representation
of $G$ over $\F_2$, $B=\F_2 G$,
fits into a short exact sequence $0\to A\to B\to C\to 0$
of $G$-modules,
with both $A$ and $C$ isomorphic to the trivial representation $\F_2$.
We will give the desired counterexamples for this coefficient
sequence.

Let $f\colon Y\to X$ be a finite \etale morphism of degree 2.
Let $B=f_*(\F_2)_Y$.
Then the exact sequence above gives an exact sequence $0\to A
\to B\to C\to 0$ of locally
constant \etale sheaves on $X$, with both $A$ and $C$ isomorphic
to $(\F_2)_X$. By Lemma \ref{induced}, the resulting complex
of Chow groups with twisted coefficients has the form:
$$0\to CH_i(X)/2\to CH_i(Y)/2\to CH_i(X)/2\to 0,$$
where the first homomorphism is pullback and the second
is pushforward.

For example, take $k=\C$, $X=A^1-0$, $Y=A^1-0$, and
define $f\colon Y\to X$ by $f(y)=y^2$.
Then exactness fails on the right for $i=1$. (The generator
of $CH_1Y=\Z$ maps to 2 times the generator of $CH_1X=\Z$, hence
to zero modulo 2.) Next, take $k=\C$, $X=(A^2-0)/G$ (where $G=\Z/2$
acts by $\pm 1$), and $Y=A^2-0$. Then $CH_1X\cong CH^1BG\cong \Z/2$,
whereas
$CH_1Y=0$, and so the sequence is not exact on the left.

For the middle, let $k=\Q$ and let $E$ be an elliptic curve over
$\Q$ such that the Mordell-Weil group $E(\Q)$ contains
$(\Z/2)^2$ and has rank at least 1. (For example, $E$ could be the curve
\cite[\href{https://www.lmfdb.org/EllipticCurve/Q/117.a3}{Elliptic Curve 117.a3}]{lmfdb}.) Let $Y$ be $E$ minus the 2-torsion subgroup $E[2]$;
so $Y$ is $E$ minus
4 rational points. Let $G=\Z/2$ act on $Y$ by $\pm 1$; then
$G$ acts freely on $Y$, and 
$X:=Y/G$ is isomorphic to $\P^1_{\Q}$ minus 4 rational points.
So $CH_0X=0$ and hence $CH_0(X)/2=0$. On the other hand,
$CH_0Y\cong E(\Q)/E[2]$, and so $CH_0(Y)/2\neq 0$. Thus the sequence
$CH_0(X)/2\to CH_0(Y)/2\to CH_0(X)/2$ is not exact.
\end{proof}

\begin{remark}
We can also give examples over $\C$ for which exactness
fails in the middle, in Lemma \ref{negative}. Let $M$ be the K3 surface
which is the double cover of $\P^2_{\C}$
ramified along the smooth sextic curve
$C=\{0=x^6+y^6+z^6-10(x^3y^3+y^3z^3+z^3x^3)\}$. Mukai observed that
the automorphism group of $M$ contains the Mathieu group
$M_9=3^2 Q_8$ of order 72,
one of the largest finite groups of symplectic automorphisms
of a K3 surface \cite[Theorem 0.3]{Mukai}.
His arguments imply that $M$ has Picard group $\Z^{20}$.
Let $X=\P^2-C$, which
has the \etale double cover $Y:=M-C$. Then $CH^1X\cong \Z/6$,
and so $CH^1(X)/2\cong \Z/2$. On the other hand,
$C$ has genus 10, and so $C^2=2g-2=18$ on the K3 surface $M$.
It follows that $C$ is not divisible
by 2 in $CH^1M$: if $C\sim 2D$, then $C^2=4D^2$, but $C^2$ is not zero
modulo 4. So $CH^1(Y)/2=(CH^1(M)/2)/\langle C\rangle$
is isomorphic to $(\Z/2)^{19}$.
It follows that the sequence
$$CH^1(X)/2\to CH^1(Y)/2\to CH^1(X)/2$$
is not exact.
\end{remark}

\section{Chow groups and coflasque resolutions}

We now give a sufficient condition for an exact sequence
of coefficient modules to give an exact sequence of twisted
Chow groups. The statement uses the notion of a coflasque
resolution from the theory of algebraic tori.

Let $G$ be a finite group and $M$ a $\Z G$-lattice, meaning
a finitely generated $\Z G$-module that is $\Z$-projective.
Following Colliot-Th\'el\`ene and Sansuc, $M$ is called
{\it invertible }if it is a summand of a permutation module
\cite[section 0.5]{CTSflasque}.
Next, $M$ is {\it coflasque }if $H^1(H,M)=0$ for every subgroup
$H$ of $G$. Finally, $M$ is {\it flasque }if the dual lattice
$M^*$ is coflasque. An invertible $\Z G$-lattice is flasque and coflasque.
By Endo and Miyata, every coflasque $\Z G$-lattice
is invertible if and only if every Sylow subgroup of $G$ is cyclic
\cite[Proposition 2]{CTSequiv}.

More generally, let $R$ be a Dedekind domain
that is $\Z$-torsion free. We can then make the same definitions
for an $RG$-lattice, meaning a finitely generated $RG$-module
that is $R$-projective. For example, for a prime number $p$,
the localization $R=\Z_{(p)}$ or the completion $R=\Z_p$
come up naturally.

For a profinite group $L$ (such as the \etale
fundamental group of a scheme), we define an $R$-module $M$
with continuous $L$-action to be invertible, coflasque,
or flasque if there is a finite quotient group $G$ of $L$
such that $M$ is a $R G$-lattice with the corresponding property.

For a finite group $G$, every finitely generated $RG$-module $M$
has a {\it coflasque resolution}
$$0\to Q\to P\to M\to 0,$$
meaning that $P$ is a permutation module over $R$ and $Q$ is coflasque
\cite[Lemma 0.6]{CTSflasque}.
Moreover, $Q$ is determined by $M$ up to direct sums
with permutation modules.

\begin{theorem}
\label{coflasque}
Let $X$ be a $k$-scheme of finite type, and let $0\to A\to B\to C\to 0$
be an exact sequence of locally constant \etale sheaves on $X$. Let $i$ be
an integer.

(1) If $A$ is coflasque, then $CH_i(X,B)\to CH_i(X,C)\to 0$
is exact.

(2) If $A$ is invertible, then $CH_i(X,A)\to CH_i(X,B)
\to CH_i(X,C)\to 0$ is exact.
\end{theorem}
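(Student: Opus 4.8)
The plan is to reduce everything to the cycle-module complexes $C_i(X,-)_j$ introduced in the localization sequence above, and to exploit the fact that coflasqueness and invertibility are precisely the conditions that make the relevant Galois cohomology sequences exact at the field level. Concretely, for each field $F$ over $X$ the sheaf $E$ restricts to a representation of $\Gal(F^{\mathrm{sep}}/F)$, and $H^*[E](F)=\oplus_j H^j(F,E(j))$. From $0\to A\to B\to C\to 0$ we get a long exact sequence on Galois cohomology over each such $F$; the point is to locate where coflasqueness/invertibility forces a short exact sequence of the pieces $H^{i+j}(F,E(j))$ that actually enter the definition of $C_i(X,E)_{-i}$, i.e. the pieces $H^0(k(x),E)$ in degree $j=0$ (for the Chow group itself) together with the $H^1(k(x),E(1))$ contributions from codimension-one points.

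First I would record the key cohomological input. For part (1): if $A$ is coflasque, then for \emph{every} field $F$ over $X$ and every finite separable extension, $H^1$ of $A$ vanishes after restricting to the relevant open subgroup (coflasque means $H^1(H,A)=0$ for all subgroups $H$ of the finite quotient through which the action factors, and this passes to the profinite setting and to subfields). Hence $H^0(F,B)\to H^0(F,C)$ is surjective. Applying this with $F=k(x)$ for all $x\in X_{(i)}$ shows the map of groups $\oplus_{x\in X_{(i)}}H^0(k(x),B)\to\oplus_{x\in X_{(i)}}H^0(k(x),C)$ is surjective; since $CH_i(X,-)$ is a quotient of this group (a cokernel of the residue map, which is natural in $E$), surjectivity of $CH_i(X,B)\to CH_i(X,C)$ follows by a trivial diagram chase. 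For part (2): if $A$ is invertible, it is a summand of a permutation module, so $A=f_*\Lambda$-type sheaves up to summands; more to the point, $H^1(F,A)=0$ \emph{and} $H^1$ stays zero on the level needed, while additionally the surjectivity above improves to exactness at the middle term because for invertible $A$ the sequence $0\to H^0(F,A)\to H^0(F,B)\to H^0(F,C)\to 0$ is short exact (as $H^1(F,A)=0$) \emph{and} compatible with residues.

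The cleanest way to get part (2) is probably to pass to the full cycle-module complex and use the localization long exact sequence already stated. Since $A$ is invertible, Lemma \ref{induced} (together with the fact that a summand of $f_*E'$ has its twisted Chow groups a summand of those of a covering space) shows $CH_i(X,A)$ behaves like an ordinary Chow group of a cover, and in particular the residue complex computing $A_i(X,A)_{-i}$ has vanishing higher homology in the relevant range — equivalently, $H^1[A]$-contributions vanish. Then the short exact sequence of cycle modules $0\to H^*[A]\to H^*[B]\to H^*[C]\to 0$ (exact because invertibility of $A$ kills the obstruction $H^1$ term degreewise over every field) yields a long exact sequence of the homology groups $A_i(X,-)_{-i}$; chopping it off at the right spot gives the four-term exact sequence $CH_i(X,A)\to CH_i(X,B)\to CH_i(X,C)\to 0$. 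Part (1) is then the weaker statement obtained by only using right-exactness of $H^0$, which needs only $H^1(F,A)=0$, i.e. coflasqueness.

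I expect the main obstacle to be verifying that $0\to H^*[A]\to H^*[B]\to H^*[C]\to 0$ is an \emph{exact sequence of cycle modules} in the sense needed — that is, that the connecting maps in Galois cohomology are compatible with transfers, Milnor $K$-theory module structure, and especially the residue homomorphisms constructed (in the non-$\Z[1/e]$ case) in Corollary \ref{residue}. Exactness as graded abelian groups over each field is immediate from the long exact Galois sequence once the relevant $H^1$ vanishes; the subtlety is naturality of that sequence under the residue $\partial_v$, since $\partial_v$ is not simply a map induced by a morphism of sheaves. I would handle this by checking compatibility first in the $\Z[1/e]$-linear case, where $\partial_v$ comes from the Cisinski–Déglise purity isomorphism $\Z[1/e](b-1)_{k(v)}[-2]\cong i^!\Z[1/e](b)_{O_v}$ and is therefore functorial in the coefficient sheaf, and then deducing the general case from the construction in Corollary \ref{residue}. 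Granting that compatibility, the rest is formal.
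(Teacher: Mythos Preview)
Your argument for part (1) is correct and is essentially the paper's proof: coflasqueness of $A$ gives $H^1(k(x),A)=0$, hence surjectivity of $H^0(k(x),B)\to H^0(k(x),C)$ at every point, hence surjectivity on the cokernels.

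For part (2), however, there is a genuine gap. You argue that invertibility of $A$ makes $0\to H^0(F,A)\to H^0(F,B)\to H^0(F,C)\to 0$ short exact, and that this (with compatibility with residues) is enough for exactness at $CH_i(X,B)$. It is not. An element $u\in Z_i(X,B)$ whose class dies in $CH_i(X,C)$ has image in $Z_i(X,C)$ equal to $\partial y$ for some $y\in\bigoplus_{x\in X_{(i+1)}} H^1(k(x),C(1))$; to modify $u$ by a boundary so that it lies in $Z_i(X,A)$, you must lift $y$ to $\bigoplus_{x\in X_{(i+1)}} H^1(k(x),B(1))$. The obstruction to this lift lives in $H^2(k(x),A(1))$, not in $H^1(k(x),A)$. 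Your proposed short exact sequence of cycle modules has the same issue: exactness in degree $1$ requires surjectivity of $H^1(F,B(1))\to H^1(F,C(1))$, and that again needs $H^2(F,A(1))=0$.

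This vanishing is precisely where invertibility (as opposed to mere coflasqueness) enters, and you have not supplied it. The paper's argument is: since $A$ is a $\Z$-torsion-free lattice, $H^2(F,A(1))=H^2(F,A\otimes_{\Z}^L G_m[-1])\cong H^1(F,A\otimes_{\Z}G_m)$, and for $A$ invertible this is a summand of $H^1$ of an induced torus, which vanishes by Shapiro's lemma and Hilbert's Theorem~90. That this distinction is essential is confirmed by Theorem~\ref{counterexample}: for coflasque but non-invertible $A$, exactness at the middle term can fail. Your remark that for invertible $A$ the complex $A_\bullet(X,A)_{-i}$ ``has vanishing higher homology in the relevant range'' is neither needed nor obviously true, and in any case does not substitute for the missing $H^2$-vanishing.
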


\begin{example}
In some cases, Theorem \ref{coflasque} describes Chow groups
with twisted coefficients in terms of the usual Chow groups
of varieties. For example, let $G$ be a finite group,
and let $M=\Z G/\Z$. (For example, if $G=\Z/2$, then $M$ is $\Z$
with $G$ acting by $\pm 1$.) Let $X$ be the quotient of a $k$-scheme
$Y$ by a free $G$-action. (One could assume that $Y$ is quasi-projective
to ensure that $X$ is a scheme, or use Remark \ref{space}.)
Then applying Theorem \ref{coflasque}
to the coflasque resolution $0\to \Z\to \Z G\to M\to 0$
gives an exact sequence
$$CH_iX\to CH_iY\to CH_i(X,M)\to 0,$$
where the first homomorphism is pullback. This describes $CH_i(X,M)$
in terms of Chow groups of varieties.

More generally, for any finite group $G$, every finitely generated
$\Z G$-module $M$ has a resolution
$0\to Q\to P\to M\to 0$ with $P$ a permutation module and $Q$
coflasque. By Theorem \ref{coflasque}, given a homomorphism
$\pi_1X\to G$, $CH_i(X,M)$ is always
a quotient of the usual Chow group $CH_i$ of some covering space of $X$
(possibly with several connected components). (This also follows
from Theorem \ref{transfer}, below.) When $G$ is cyclic,
$Q$ is invertible by Endo--Miyata's result above; in that case,
Theorem \ref{coflasque} expresses $CH_i(X,M)$ more explicitly
as a cokernel of a homomorphism between usual Chow groups.
\end{example}

\begin{remark}
In Theorem \ref{coflasque}, if $A$ is coflasque (but not invertible),
$CH_i(X,A)\to CH_i(X,B)
\to CH_i(X,C)$ need not be exact (Theorem \ref{counterexample}).
\end{remark}

\begin{proof}
(Theorem \ref{coflasque})
Consider the diagram of \etale cohomology groups, with exact columns:
$$\xymatrix@R-10pt{
\oplus_{x\in X_{(i+1)}} H^1(k(x),A(1)) \ar[r]\ar[d] 
&\oplus_{x\in X_{(i)}} H^0(k(x),A) \ar[d]\\
\oplus_{x\in X_{(i+1)}} H^1(k(x),B(1)) \ar[r]\ar[d]  
&\oplus_{x\in X_{(i)}} H^0(k(x),B) \ar[d]\\
\oplus_{x\in X_{(i+1)}} H^1(k(x),C(1)) \ar[r]\ar[d]  
  &\oplus_{x\in X_{(i)}} H^0(k(x),C) \ar[d]\\
\oplus_{x\in X_{(i+1)}} H^2(k(x),A(1)) \ar[r]
  &\oplus_{x\in X_{(i)}} H^1(k(x),A).
}$$
The cokernels of the first three horizontal maps are
$CH_i(X,A)$, $CH_i(X,B)$, and $CH_i(X,C)$.

Proof of (1): Suppose that $A$ is coflasque. Then $H^1(k(x),A)=0$
for every point $x$ in $X$. It follows that $H^0(k(x),B)\to H^0(k(x),C)$
is surjective for each point $x$ in $X$.
Therefore, $CH_i(X,B)\to CH_i(X,C)$
is surjective.

Proof of (2): Suppose that $A$ is invertible. Then $A$ is coflasque,
and so (1) gives that $CH_i(X,B)\to CH_i(X,C)$ is surjective.
Furthermore, since $A$ is $\Z$-torsion free, we have
$H^2(k(x),A(1))\cong H^1(k(x),A\otimes_{\Z} G_m)$ for every
point $x$ in $X$. For $R=\Z$, that group is $H^1$ with coefficients
in an algebraic torus over $k(x)$. Since $A$ is invertible,
this $H^1$ group is zero, using Hilbert's Theorem 90
that $H^1(F,G_m)=0$ for every field $F$.

Then a diagram chase implies that $CH_i(X,A)\to CH_i(X,B)
\to CH_i(X,C)$ is exact. In more detail, let $u$ be an element
of $CH_i(X,B)$ that maps to zero in $CH_i(X,C)$. Choose a representative
for $u$ in $Z_i(X,B):=\oplus_{x\in X_{(i)}} H^0(k(x),B)$. Then the image of $u$
in $Z_i(X,C)$ is the boundary of some element
$y$ in $\oplus_{x\in X_{(i+1)}} H^1(k(x),C(1))$. Since $H^2(k(x),A(1))=0$
by the previous paragraph, $y$ comes from some element $z$
in $\oplus_{x\in X_{(i+1)}} H^1(k(x),B(1))$. Then $u-\partial z$
in $Z_i(X,B)$ maps to zero in $Z_i(X,C)$. Since the columns
in the diagram above are exact, $u-\partial z$ comes from an element
of $Z_i(X,A)$, as we want.
\end{proof}

\section{Twisted motivic cohomology}
\label{motivic}

In this section we define twisted motivic cohomology associated
to a locally constant \etale sheaf, following Heller--Voineagu--\O stv\ae r
\cite[section 5.2]{HVO}. We show that twisted motivic cohomology
$H^{2i}_{\M}(X,E(i))$ can be described as a twisted
higher Chow group $CH^i(X,E,0)$ (Corollary \ref{higher}).
A striking point is that this
is not always isomorphic to the twisted Chow group
$CH^i(X,E)$ from section \ref{definition} (Theorem \ref{counterexample}).
Both theories deserve to be investigated;
we compare their advantages in Remark \ref{comparison}.

Heller, Voineagu, and \O stv\ae r
consider an action of a finite group $G$ on a smooth
scheme $Y$ over a field $k$, with the order of $G$ invertible in $k$.
They define {\it Bredon motivic
cohomology }with coefficients
in a cohomological Mackey functor $M$ for $G$, $H^i_G(Y,M(j))$. In this paper,
we only consider the case where $G$ acts freely on $Y$; equivalently,
we are considering invariants of $X:=Y/G$ with its given $G$-torsor.
Then most of the information in the Mackey functor
is irrelevant.
Every $\Z G$-module $E$ determines a cohomological Mackey functor $M$
by setting $M(G/H)=E^H$. In this case, Heller--Voineagu--\O stv\ae r's
theory coincides with twisted motivic cohomology $H^i_{\M}(X,E(j))$,
as defined below.

We can imitate Hoyois's simple definition of motivic cohomology
and the construction by Kahn--Levine and Elmanto--Nardin--Yakerson
of motivic cohomology
twisted by an Azumaya algebra \cite{Hoyoislocalization,
KL, ENY}. Namely,
let $X$ be an arbitrary scheme,
and let $E$ be a locally constant \etale sheaf on $X$. Then $E$
determines a presheaf (also called $E$)
on the category $\Sm_X$ of smooth schemes over $X$,
taking a smooth morphism $\pi\colon Y\to X$ to $H^0(Y,\pi^*E)$.
Since this is an \etale sheaf, it is a Nisnevich sheaf on $\Sm_X$.
(The Nisnevich topology is defined for arbirary schemes
in \cite[Appendix C]{HoyoisLefschetz}.)
Also, $E$ has transfers for finite locally free morphisms in $\Sm_X$
(cf.\ \cite[Lemma 6.11]{MVW}). A fortiori, $E$ has framed transfers
(that is, transfers for finite syntomic morphisms with a trivialization
of the cotangent complex). As such, $E$ defines a space
$E_X$ in the framed motivic homotopy category $\HH^{\fr}(X)$.
So $E$ defines
a motivic spectrum $HE_X:=\Sigma^{\infty}_{\T,\fr}E_X$ in the stable
homotopy category $\SH(X)$. (This is not the suspension spectrum
of a motivic space. Rather, a {\it framed }motivic space is analogous
to an $E_{\infty}$ space in topology, and $\Sigma^{\infty}_{\T,\fr}$ denotes
the left adjoint to the functor $\Omega^{\infty,\fr}_{\T}$ from $\SH(X)$
to $\HH^{\fr}(X)$ (not to $\HH(X)$).) For the constant
sheaf $E=\Z_X$, Hoyois showed
that this object $H\Z_X$ coincides with Spitzweck's motivic
cohomology spectrum in $\SH(X)$, for every scheme $X$
\cite[Theorem 21]{Hoyoislocalization}.

In particular, this gives a definition of
$E$-twisted motivic cohomology:
$$H^i_{\M}(X,E(j))=\pi_{2j-i}\map_{\SH(X)}(\Sigma^{\infty}_{\T}X_+,
\Sigma^j_{\T}HE_X).$$
This is analogous to the definition of motivic cohomology twisted
by an Azumaya algebra \cite[Definition 5.17]{ENY}. In particular,
this agrees with Spitzweck's definition of motivic cohomology when $E$
is a constant sheaf, and with Voevodsky's definition when in addition
$X$ is defined over a field.

\begin{lemma}
\label{motexact}
Let $0\to A\to B\to C\to 0$ be a short exact sequence
of locally constant \etale sheaves on a scheme $X$.
Suppose that $A$ is coflasque.
Then, for each integer $j$, we have a long exact sequence
of twisted motivic cohomology groups:
$$\cdots\to H^i_{\M}(X,A(j))\to H^i_{\M}(X,B(j))\to H^i_{\M}(X,C(j))
\to H^{i+1}_{\M}(X,A(j))\to\cdots$$
\end{lemma}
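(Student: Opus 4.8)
The plan is to show that the short exact sequence $0\to A\to B\to C\to 0$ induces a cofiber sequence $HA_X\to HB_X\to HC_X$ in $\SH(X)$, and then to extract the long exact sequence by applying $\pi_*\map_{\SH(X)}(\Sigma^{\infty}_{\T}X_+,\Sigma^j_{\T}(-))$. All the content is concentrated in a single step: checking that $0\to A\to B\to C\to 0$ stays exact after passing from \etale sheaves to Nisnevich sheaves on $\Sm_X$. That is the only place the coflasqueness hypothesis is used, and it is the main obstacle.

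For that step, recall that $E$ is viewed as the Nisnevich sheaf on $\Sm_X$ sending $\pi\colon Y\to X$ to $H^0(Y,\pi^*E)$, which is a Nisnevich sheaf because it is an \etale sheaf. Exactness of a complex of Nisnevich sheaves is detected on stalks, i.e.\ on the henselizations $\mathcal{O}^h_{Y,y}$ at points $y$ of smooth $X$-schemes $Y$. Since a henselian local ring has the same small \etale topos as its residue field, the stalk of $E$ at $(Y,y)$ is the group $E^{\Gamma_y}$ of Galois invariants, where $\Gamma_y=\Gal(\kappa(y)^{\mathrm{sep}}/\kappa(y))$ acts on $E$ through $\Gamma_y\to\pi_1^{\et}Y\to\pi_1^{\et}X$. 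The sequence $0\to A^{\Gamma_y}\to B^{\Gamma_y}\to C^{\Gamma_y}\to 0$ is automatically left exact, and the cokernel of $B^{\Gamma_y}\to C^{\Gamma_y}$ injects into $H^1(\Gamma_y,A)$; so this step reduces to showing $H^1(\Gamma_y,A)=0$ for every such $y$. I would prove this as follows. Choose a finite quotient $G$ of $\pi_1^{\et}X$ through which the action on $A$ factors and for which $A$ is a $\Z G$-lattice, let $H\subseteq G$ be the image of $\Gamma_y$, and let $N$ be the kernel of $\Gamma_y\surj H$. Then $N$ acts trivially on $A$, so $H^1(N,A)=\Hom_{\mathrm{cont}}(N,A)=0$, because $N$ is profinite and $A$ is finitely generated and $\Z$-free; and $H^1(H,A)=0$ since $A$ is coflasque and $H$ is a subgroup of $G$. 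The inflation--restriction exact sequence then forces $H^1(\Gamma_y,A)=0$. Consequently the Nisnevich sheafification of the cokernel presheaf $Y\mapsto\coker(H^0(Y,A)\to H^0(Y,B))$ is $C$, so $0\to A\to B\to C\to 0$ is exact as a sequence of Nisnevich sheaves on $\Sm_X$.

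Granting that, I would finish formally. Viewing $A,B,C$ as discrete framed motivic spaces over $X$, the sectionwise injectivity of $A(Y)\to B(Y)$ makes the cofiber of $A\to B$ in presheaves of pointed spaces agree sectionwise with the presheaf cokernel, whose Nisnevich sheafification is $C$ by the previous paragraph. Applying $A^1$-localization (into $\HH^{\fr}(X)$) and then $\Sigma^{\infty}_{\T,\fr}$, both of which preserve colimits, turns $A\to B\to C$ into a cofiber sequence $HA_X\to HB_X\to HC_X$ in $\SH(X)$. Applying $\pi_*\map_{\SH(X)}(\Sigma^{\infty}_{\T}X_+,\Sigma^j_{\T}(-))$, which is exact in the second variable since $\SH(X)$ is stable, converts this into a long exact sequence of abelian groups; unwinding $H^i_{\M}(X,E(j))=\pi_{2j-i}\map_{\SH(X)}(\Sigma^{\infty}_{\T}X_+,\Sigma^j_{\T}HE_X)$ and tracking the degree shift of the connecting map reproduces the asserted long exact sequence. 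I expect the only genuine difficulty to lie in the step above: $HE_X$ depends only on the Nisnevich topology on $\Sm_X$, so for a general (non-coflasque) $A$ a short exact sequence of \etale sheaves need not yield a cofiber sequence of motivic spectra, and coflasqueness of $A$ is exactly what makes $H^1(\Gamma_y,A)$ vanish and restores exactness.
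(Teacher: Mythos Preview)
Your proposal is correct and follows essentially the same approach as the paper: reduce to showing that $0\to A\to B\to C\to 0$ is exact as a sequence of Nisnevich sheaves on $\Sm_X$ by checking stalks at henselian local rings, use inflation--restriction together with coflasqueness of $A$ and torsion-freeness to kill $H^1(\Gamma_y,A)$, and then pass to an exact triangle $HA_X\to HB_X\to HC_X$ in $\SH(X)$ to obtain the long exact sequence. Your bookkeeping is in fact slightly more explicit than the paper's (you keep track separately of the finite quotient $G$ of $\pi_1^{\et}X$ witnessing coflasqueness and the image $H\subseteq G$ of $\Gamma_y$), but the argument is the same.
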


\begin{remark}
Using this lemma, we can always describe $H^{2i}_{\M}(X,C(i))$
as the cokernel of an explicit map between the usual Chow groups
of covering spaces of $X$ (not necessarily connected).
Namely, assume that $C$ is a finitely generated
$\Z$-module with an action of $\pi_1^{\et} X$, and let $0\to A\to B
\to C\to 0$ be a coflasque resolution. We have $H^{2i+1}_{\M}(X,A(i))=0$
by the relation to twisted higher Chow groups (Corollary \ref{higher}
below), and so Lemma \ref{motexact} expresses $H^{2i}_{\M}(X,C(i))$
as the cokernel of $H^{2i}_{\M}(X,A(i))\to H^{2i}_{\M}(X,B(i))$.
By taking a coflasque resolution of $A$, we can describe
$H^{2i}_{\M}(X,A(i))$ as the image of a homomorphism
from $H^{2i}_{\M}(X,P(i))$,
for some permutation module $P$. Since $P$ and $B$ are permutation
modules, we have expressed $H^{2i}_{\M}(X,C(i))$ as the cokernel
of a map between the usual Chow groups of covering spaces of $X$.

The twisted Chow groups can likewise be expressed as the image
of a homomorphism from
the usual Chow groups of some covering space by Theorem \ref{coflasque}
(or by the surjective homomorphism $H^{2i}_{\M}(X,C(i))\to
CH^i(X,C)$ from Theorem \ref{surjective}, below). I don't know
a simple description of the kernel, though.
\end{remark}

\begin{proof}
For any exact sequence $0\to A\to B\to C\to 0$ of locally constant
\etale sheaves on $X$, we have an exact sequence
$0\to A_X\to B_X\to C_X$ of Nisnevich sheaves with transfer.
Suppose in addition that $A$ is coflasque.
Let $Y$ be a smooth scheme over $X$,
and let $L$ be the henselization of the local ring
of $Y$ at a point $x$ in $Y$. (We are not taking
the {\it strict }henselization. So
the residue field of $L$ is $k(x)$, not the separable closure
of $k(x)$.) I claim that $H^1_{\et}(L,A)=0$. Indeed, we have 
$H^1_{\et}(L,A)\cong H^1(k(x),A)$.
Let $G$ be the image
of $\pi_1(\Spec L)=\Gal(k(x)_s/k(x))$ acting on $A$, which is a finite group.
Let $H$ be the kernel of $\Gal(k(x)_s/k(x))\to G$.
Then the Lyndon--Hochschild--Serre spectral sequence gives
an exact sequence
$$0\to H^1(G,A)\to H^1(k(x),A)\to H^1(H,A)^G.$$
Here $H^1(G,A)$ is zero since $A$ is coflasque,
and $H^1(H,A)=\Hom(H,A)$ is zero since $H$ is profinite
and $A$ is a discrete torsion-free abelian group.
So $H^1_{\et}(L,A)=H^1(k(x),A)=0$, as claimed.

For each henselian local ring $L$ of a scheme $Y$ in $\Sm_X$,
we have an exact sequence
of \etale cohomology:
$$0\to A(L)\to B(L)\to C(L)\to H^1_{\et}(L,A).$$
Therefore, the previous paragraph gives that $0\to A(L)\to
B(L)\to C(L)\to 0$ is exact. That is, $0\to A_X\to B_X\to C_X\to 0$
is an exact sequence of Nisnevich sheaves on $X$ \cite[p.~90]{MVW}.
As a result, we get an exact triangle $HA_X\to HB_X\to HC_X$ in $\SH(X)$.
That implies the desired long exact sequence.
\end{proof}

We can also define twisted higher Chow groups. These groups should agree
with twisted motivic cohomology for a regular scheme.
We prove this in bidegrees $(2j,j)$,
and in any bidegree when $X$ is smooth over a perfect field $k$
and the \etale sheaf $E$ is pulled back from $k$
(Lemma \ref{overfield} and Corollary \ref{higher}).
Namely, for an equidimensional scheme $X$ with a locally constant
\etale sheaf $E$,
define the $j$th {\it twisted
Bloch cycle complex }$z^j(X,E)$ as the simplicial abelian group
(or the associated chain complex)
which in degree $d$ is given by
$$\oplus_{z\in X\times \Delta^d} H^0(k(z),E),$$
where the sum is over all codimension-$j$ points of 
$X\times \Delta^d$ whose closure intersects all faces of $X\times \Delta^d$
in the expected dimension. (Here $\Delta^d$ denotes the algebraic simplex
$\{ x_0+\cdots+x_d=0\}$ in $A^{d+1}$, as in Bloch's definition
of higher Chow groups.) The boundary maps in the chain complex
are given by intersections with faces of $X\times \Delta^d$.
Write $CH^j(X,E,d)$ for the homotopy group $\pi_d$ of the simplicial
abelian group $z^j(X,E)$ (or, equivalently, $H_d$ of the associated
chain complex).

\begin{lemma}
\label{overfield}
Let $X$ be an equidimensional smooth scheme of finite type
over a perfect field $k$. Let $E$ be an \etale sheaf over $k$,
pulled back to $X$.
Then twisted motivic cohomology agrees with twisted higher Chow groups:
$$H^i_{\M}(X,E(j))\cong CH^j(X,E,2j-i).$$
\end{lemma}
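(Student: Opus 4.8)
The plan is to reduce to the comparison of motivic cohomology with Bloch's higher Chow groups for smooth varieties over the perfect field $k$ with constant coefficients \cite{MVW} (together with Hoyois's identification \cite[Theorem 21]{Hoyoislocalization} of $H\Z_Y$ with Voevodsky's motivic cohomology for $Y$ smooth over a field), by resolving the coefficient sheaf by permutation modules. Everything is organized around a natural comparison map $\Phi_E\colon CH^j(X,E,2j-i)\to H^i_{\M}(X,E(j))$, which I would construct --- as in the constant-coefficient case --- from a morphism of presheaves of complexes on $\Sm_X$ out of the twisted Bloch complex $z^j(-,E)$; the goal is to show each $\Phi_E$ is an isomorphism.

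First I would reduce to $E=M$, a finitely generated $\Z G$-module for a finite quotient $G$ of $\Gal(k_s/k)$. Both sides commute with filtered colimits in $E$: on the cycle side because $H^0(k(z),-)$ (Galois invariants) does; on the motivic side because $E\mapsto HE_X$ preserves colimits while $\Sigma^\infty_{\T}X_+$ is compact in $\SH(X)$ (as $X$ is of finite type). Since a locally constant \etale sheaf over $k$ is a filtered colimit of naively locally constant ones, each of which is a filtered colimit of finitely generated $\Z G$-submodules, this reduction is routine. Next I would settle the case of a permutation module $M=\oplus_a\Z[G/H_a]$: writing $f_a\colon Y_a\to X$ for the associated finite \etale cover --- so each $Y_a$ is smooth and equidimensional over $k$ --- the argument of Lemma \ref{induced} identifies $z^j(X,(f_a)_*\Z)$ with $z^j(Y_a)$ and, by base change along finite \etale maps, $H((f_a)_*\Z_{Y_a})$ with the pushforward of $H\Z_{Y_a}$. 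Hence both sides become $\oplus_a$ of the corresponding invariant of $Y_a$, and $\Phi_M$ becomes a direct sum of the classical comparison isomorphisms; so $\Phi$ is an isomorphism on permutation modules, hence on invertible modules.

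For a general finitely generated $\Z G$-module $M$, I would iterate coflasque resolutions to get an exact sequence $\cdots\to P_1\to P_0\to M\to 0$ with each $P_q$ a finitely generated permutation module and each syzygy $Q_q$ (the kernel of $P_q\to P_{q-1}$, with $P_{-1}:=M$) coflasque. The key point is that $H^1(F,Q)=0$ for a coflasque $Q$ and any field $F$ over $k$ --- precisely the inflation--restriction computation from the proof of Lemma \ref{motexact} (using $H^1(H,Q)=\Hom(H,Q)=0$ for $H$ profinite and $Q$ torsion-free). This vanishing makes the left-exact functors $H^0(k(z),-)$ take the resolution to a levelwise exact sequence of complexes, so that $z^j(X,M)$ is quasi-isomorphic to the total complex of $z^j(X,P_\bullet)$; and it makes $\cdots\to(P_1)_X\to(P_0)_X\to M_X\to 0$ exact as Nisnevich sheaves with (framed) transfers on $\Sm_X$ (check on henselian local rings, whose residue fields are fields over $k$, exactly as in Lemma \ref{motexact}), so that $HM_X$ is the totalization of $H(P_\bullet)_X$. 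Since $\Phi$ is compatible with these two totalizations and is an equivalence on each $P_q$ by the permutation-module case, it is an equivalence on $M$; with the first reduction this proves the lemma.

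The main obstacle I anticipate is the claim that $E\mapsto HE_X$ is exact enough to convert the permutation-module resolution into the totalization statement for $HM_X$: one needs the functor from Nisnevich sheaves with framed transfers to $\SH(X)$ underlying the definition of $HE_X$ to send short exact sequences to cofiber sequences. This should follow from the framed-motivic formalism used to define $HE_X$ (cf.\ \cite{ENY}); alternatively, over the field $k$ one can pass to Voevodsky's $DM(k)$, identify $H^i_{\M}(X,E(j))$ with $\Hom_{DM(k)}(M(X),E\otimes^{L}\Z(j)[i])$, and use exactness of $E\mapsto E\otimes^{L}\Z(j)$ there. A secondary, more bookkeeping-type point is the construction of $\Phi_E$ and the check that it specializes to the standard higher-Chow-to-motivic-cohomology comparison on the covers $Y_a$ in the permutation case; for this I would simply imitate the construction of the comparison map in the constant-coefficient setting.
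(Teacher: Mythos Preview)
Your approach is genuinely different from the paper's and considerably more indirect. The paper does not reduce to constant coefficients at all: it observes that the Nisnevich sheaf $E_k$ on $\Sm_k$ is \emph{birational} (since $E$ is a locally constant \etale sheaf and smooth $k$-schemes are normal), hence $HE_k$ is a $0$-slice in $\SH(k)$, i.e.\ $HE_k\simeq s_0HE_k$. After rewriting the mapping spectrum over $k$ by adjunction, Levine's results on the homotopy coniveau tower identify it directly with the simplicial spectrum $\bigoplus_{z}(s_0HE_k)(k(z))\simeq\bigoplus_z HE_k(k(z))$, which is the twisted Bloch complex. So the paper gets the comparison in one stroke, uniformly in $E$, with no resolutions and no induction.

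Your route can be made to work, but the step you label ``bookkeeping'' is where the real content hides. You need a comparison map $\Phi_E$ that is defined \emph{before} you know the theorem, natural in $E$, and compatible with the connecting homomorphisms in the two long exact sequences for $0\to Q\to P\to E\to 0$; otherwise the five-lemma argument and the totalization comparison have nothing to compare. Producing such a $\Phi_E$ essentially requires a complex-level (or $\infty$-categorical) identification of the twisted Bloch complex with a model for $E(j)$, which is precisely what the paper extracts from Levine's tower. A second point you should make explicit is convergence on the motivic side: your permutation resolution is infinite, so to pass from ``$HM_X$ is the totalization of $H(P_\bullet)_X$'' to a statement about $H^i_{\M}(X,M(j))$ you need that $M(X)$ is compact in $\SH(k)$ (or $DM(k)$), so that mapping out of it commutes with the filtered colimit of brutal truncations; this holds for $X$ smooth of finite type over a perfect field, but it is doing real work and should be stated. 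In short, your strategy trades the slice/coniveau machinery for the classical comparison plus homological algebra, at the cost of having to check naturality and convergence that the paper's argument gets for free.
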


\begin{proof}
We follow the proof of the analogous isomorphism for motivic
cohomology twisted by an Azumaya algebra \cite[Proposition 5.15]{ENY}.
The key point is that the $A^1$-invariant Nisnevich sheaf $E_k$
on $\Sm_k$ is birational,
in the sense that $E_k(Y)\cong E_k(U)$ for a dense open subset $U$
of $Y$ in $\Sm_k$. (This follows from the fact that $Y$ is normal
\cite[Tag 0BQI]{Stacks}.) In terms of Voevodsky's slice filtration,
it follows that $HE_k$ is a 0-slice in $\SH(k)$, meaning that $HE_k
\simeq s_0 HE_k$ \cite[Proposition 5.3]{ENY}. We can rewrite the spectrum
used to define the twisted motivic cohomology of $X$ in terms
of the morphism $f\colon X\to \Spec k$:
\begin{align*}
\map_{SH(X)}(\Sigma^{\infty}_{\T}X_+,\Sigma^j_{\T}HE_X)
&\simeq \map_{SH(X)}(\Sigma^{\infty}_{\T}X_+,f^*(\Sigma^j_{\T}HE_k))\\
&\simeq \map_{SH(k)}(Lf_{\#} (\Sigma^{\infty}_{\T}X_+),\Sigma^j_{\T}HE_k)\\
&=\map_{SH(k)}(\Sigma^{\infty}_{\T}X_+,\Sigma^j_{\T}HE_k),
\end{align*}
where in the last line we write $\Sigma^{\infty}_{\T}X_+$ for 
the object of $\SH(k)$ associated to $X$.
By Levine's results
on the homotopy coniveau tower, using the smoothness
of $X$ over $k$, it follows that this mapping spectrum in $\SH(k)$
is equivalent to the simplicial spectrum
$$\oplus_{z\in X\times \Delta^{\bullet}}(s_0HE_k)(k(z)),$$
where the sum is indexed by every codimension-$j$ point of $X\times
\Delta^{\bullet}$ whose closure meets all faces in the expected
dimension \cite[Corollary 5.3.2 and Theorem 9.0.3]{Levine}.
Since $s_0HE_k\simeq HE_k$, the latter spectrum is equivalent
to the twisted Bloch cycle complex.
\end{proof}

\begin{corollary}
\label{higher}
Let $X$ be an equidimensional smooth scheme of finite type
over a perfect field $k$. Let $E$ be a locally constant \etale
sheaf on $X$ such that the exponential characteristic of $k$
acts invertibly on $E$.
Then twisted motivic cohomology in degree $(2j,j)$
agrees with twisted higher Chow groups:
$$H^{2j}_{\M}(X,E(j))\cong CH^j(X,E,0).$$
\end{corollary}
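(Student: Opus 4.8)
The plan is to reduce the general locally constant sheaf $E$ to the case of a sheaf pulled back from the base field $k$, where Lemma~\ref{overfield} applies directly in bidegree $(2j,j)$. The point of Corollary~\ref{higher} is precisely that we drop the hypothesis ``$E$ is pulled back from $k$'' in favor of ``$E$ is locally constant on $X$ with $e$ acting invertibly,'' so the content is an \'etale descent / transfer argument. First I would choose a finite \'etale Galois covering $\pi\colon Y\to X$ with group $G$ such that $\pi^*E$ becomes constant on $Y$, i.e.\ $\pi^*E\cong E_0$ for the \'etale sheaf $E_0$ over $k$ associated to the underlying abelian group of $E$ (a $\Z G$-module). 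Since the order of $G$ divides a power of the monodromy group and $E$ is a $\Z[1/e]$-module, I would like to run an averaging argument: $\pi_*\pi^*$ on both $H^{2j}_{\M}(-,E(j))$ and $CH^j(-,E,0)$ is multiplication by $|G|$ up to the trace, exactly as in the proof of Lemma~\ref{rational}; but $|G|$ need not be invertible, so pure averaging won't finish it. So instead the cleaner route is to compare the two theories termwise.

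The key observation is that \emph{both} sides are cokernels of maps of explicit abelian groups built from the \'etale cohomology $H^0(k(z),E)$ of residue fields of codimension-$j$ points, so one only needs to identify the complexes. On the twisted higher Chow side, by definition $CH^j(X,E,0)=\coker\big(z^j(X,E)_1\to z^j(X,E)_0\big)$, where $z^j(X,E)_d=\oplus_{z} H^0(k(z),E)$ over codimension-$j$ points of $X\times\Delta^d$ meeting all faces properly, with boundary given by intersection with faces. On the twisted motivic side, I would invoke Lemma~\ref{overfield} applied \emph{not} to $X/k$ but to the covering $Y/k$: since $\pi^*E=E_0$ is pulled back from $k$ and $Y$ is smooth over the perfect field $k$, we get $H^{2j}_{\M}(Y,E_0(j))\cong CH^j(Y,E_0,0)$. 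Then I would descend along $\pi$ on each side: the presheaf $E$ on $\Sm_X$ is the Nisnevich (indeed \'etale) sheaf obtained by pushforward from $E_0$ on $\Sm_Y$, so $HE_X\simeq \pi_*HE_0$ in $\SH(X)$ up to the relevant coherences, and likewise the Bloch complex $z^j(X,E)$ is the $G$-invariants (or rather a descent object) of $z^j(Y,E_0)$ with its $G$-action — because a codimension-$j$ point of $X\times\Delta^d$ pulls back to a finite set of codimension-$j$ points of $Y\times\Delta^d$ and $H^0(k(z),E)=\big(\oplus_{w\mapsto z}H^0(k(w),E_0)\big)^{G_z}$ by \'etale descent for the sheaf $E$. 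Since $e$ is invertible on $E$ this descent spectral sequence could in principle contribute higher terms from $H^{>0}(G_z,-)$, but in \emph{degree} $(2j,j)$ those are killed: this is exactly the phenomenon that forces the restriction to bidegree $(2j,j)$ in the corollary, because $H^0(k(z),E(j))$ for the residue field only sees the weight-$0$ part, and the higher group cohomology of $G_z$ acting on a torsion-free (away from $e$) module, appearing in strictly positive \'etale-cohomological degree, does not interfere with the cokernel computing $CH^j$ and $H^{2j}_{\M}$.

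Concretely the steps are: (1) pick the Galois cover $\pi\colon Y\to X$ trivializing $E$, identify $\pi^*E$ with the constant-over-$k$ sheaf $E_0$; (2) apply Lemma~\ref{overfield} on $Y$ to get $H^{2j}_{\M}(Y,E_0(j))\cong CH^j(Y,E_0,0)$, both sides carrying compatible $G$-actions; (3) show that, for a smooth $k$-scheme, the functor $E\mapsto H^{2j}_{\M}(-,E(j))$ and the functor $E\mapsto CH^j(-,E,0)$ each satisfy \'etale descent for finite \'etale morphisms in the relevant range — for the Chow side directly from the definition of $z^j(X,E)$ via $H^0(k(z),-)$ and \'etale descent of the sheaf $E$, for the motivic side from the identity $HE_X\simeq \pi_*HE_0$ and the projection formula / base change for the morphism $\pi$; (4) conclude that the isomorphism on $Y$ descends to the isomorphism on $X$. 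The main obstacle is Step~(3): one must check that the comparison map $H^{2j}_{\M}(X,E(j))\to CH^j(X,E,0)$ is compatible with pullback/pushforward along $\pi$ and that the descent data match, and—crucially—that no higher \'etale-cohomological contribution of the monodromy group survives in bidegree $(2j,j)$; equivalently that the natural transformation of Corollary~\ref{higher} is an isomorphism after passing to a cover, which is where the hypotheses ``$e$ invertible on $E$'' and ``degree $(2j,j)$'' are both used. An alternative, possibly cleaner, way to organize Step~(3) is to use the localization sequence and homotopy invariance for twisted higher Chow groups together with the analogous structure on twisted motivic cohomology (Lemma~\ref{motexact} and its higher-degree extension), building the isomorphism by induction on $\dim X$ and reducing to the case of a field, where $CH^j(\Spec F,E,0)=H^0(F,E)$ for $j=0$ and both sides vanish for $j>0$ — but the descent packaging of Levine's homotopy-coniveau comparison already does most of this work, so I would try that route first.
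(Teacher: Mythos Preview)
Your main approach has a concrete error. The claimed identity $HE_X\simeq \pi_*HE_0$ is false: if $E_0$ is the constant sheaf on $Y$ with value the underlying abelian group $M$, then $\pi_*E_0=\pi_*\pi^*E$ corresponds to the induced module $\Z G\otimes_{\Z}M$, not to $E$. What is true is that the composite $E\to\pi_*\pi^*E\to E$ is multiplication by $|G|$, so $E$ is a retract of $\pi_*E_0$ only when $|G|$ acts invertibly on $E$ --- precisely the hypothesis you note is unavailable. Without that retraction, an isomorphism on $Y$ does not descend to $X$ by any formal mechanism, and your assertion that ``no higher \'etale-cohomological contribution of the monodromy group survives in bidegree $(2j,j)$'' is the statement to be proved, not a step you may invoke. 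Your Step~(3) is therefore not a gap to be filled in but the entire content of the corollary, repackaged.

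The paper's proof is essentially the alternative you mention last and set aside. It runs the coniveau filtration directly on $X$: via the localization sequence for twisted motivic cohomology (available since $e$ acts invertibly on $E$), $H^{2j}_{\M}(X,E(j))$ is expressed in terms of the groups $H^{*}_{\M}(k(x),E(*))$ at points $x$ of $X$. Over each $\Spec k(x)$ the sheaf $E$ is tautologically pulled back from the base, so Lemma~\ref{overfield} applies there and identifies these with twisted higher Chow groups of fields. Checking which of these vanish yields an exact sequence
\[
\bigoplus_{x\in X^{(j-1)}}CH^1(k(x),E,1)\longrightarrow\bigoplus_{x\in X^{(j)}}CH^0(k(x),E,0)\longrightarrow H^{2j}_{\M}(X,E(j))\longrightarrow 0,
\]
and one observes that these are exactly the generators and relations defining $CH^j(X,E,0)$. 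No cover of $X$ is taken; the restriction to bidegree $(2j,j)$ enters only through the vanishing $CH^{j-b}(k(x),E,0)=0$ for $b\neq j$.
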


\begin{proof}
Let $X^a$ be the ``union of all subvarieties of codimension at least $i$
in $X$''. Precisely,
the twisted motivic cohomology of $X-X^a$ means the direct
limit of the motivic twisted cohomology of $X-S$ over all closed subsets
of codimension at least $a$ in $X$. Then, by taking a direct limit
of localization sequences, we have an exact sequence for any $i,j,a$:
\begin{multline*}
\cdots \to\oplus_{x\in X^{(a)}}H^{i-2a}_{\M}(k(x),E(j-a))
\to H^{i}_{\M}(X-X^{a+1},E(j)) \\
\to H^i_{\M}(X-X^a,E(j))
\to\oplus_{x\in X^{(a)}}H^{i-2a+1}_{\M}(k(x),E(j-a))\to\cdots.
\end{multline*}
Here $X^{(a)}$ denotes the set of points whose closure has codimension $a$
in $X$.

The invariants for fields (on the left and right) can be identified
with twisted higher Chow groups, by Lemma \ref{overfield}.
Many of these groups are trivially zero. In particular,
the groups contributing to $H^{2i}_{\M}(X,E(i))$
are $CH^{i-b}(k(x),E,0)$ for points $x$ of codimension $b$ in $X$,
and this group is zero for $b\neq i$. Explicitly, the exact sequence above
for $a=i$ gives:
$$H^{2i-1}_{\M}(X-X^i,E(i))\to\oplus_{x\in X^{(i)}}CH^0(k(x),E,0)
\to H^{2i}_{\M}(X,E(i))\to 0.$$

To clarify this, we also want generators for $H^{2i-1}_{\M}(X-X^i,E(i))$.
The groups contributing to this are $CH^{i-b}(k(x),E,1)$
for points $x$ of codimension $b$ in $X$ with $0\leq b\leq i-1$.
This is zero for $b\neq i-1$. For $a=i-1$, the exact sequence above
shows that $\oplus_{x\in X^{(i-1)}}CH^1(k(x),E,1)$ maps onto
$H^{2i-1}_{\M}(X-X^i,E(i))$. Thus we have an exact sequence:
$$\oplus_{x\in X^{(i-1)}}CH^1(k(x),E,1)
\to\oplus_{x\in X^{(i)}}CH^0(k(x),E,0)
\to H^{2i}_{\M}(X,E(i))\to 0.$$

For a field $F$ with an \etale sheaf $E$,
we have $CH^0(F,E,0)\cong H^0(F,E)$. Likewise,
$CH^1(F,E,1)$ is generated by $H^0(L,E)$ for closed points
$\Spec L$ in $(\Spec F)\times \Delta^1$. These are the same
as the generators and relations for $CH^i(X,E,0)$, and so we have shown
that
$$H^{2i}_{\M}(X,E(i))\cong CH^i(X,E,0).$$
\end{proof}

\begin{corollary}
\label{surjective}
Let $X$ be a smooth variety over a field $k$ and $E$ a locally
constant \etale sheaf on $X$. Assume that the exponential
characteristic of $k$ acts invertibly on $E$.
Then there is a natural surjection
$$H^{2i}_{\M}(X,E(i))\to CH^i(X,E).$$
\end{corollary}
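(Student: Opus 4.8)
The plan is to combine Corollary~\ref{higher} with an explicit comparison of two presentations of the group of twisted cycles. First I would reduce to the case that $k$ is perfect: since the exponential characteristic $e$ acts invertibly on $E$, replacing $k$ by its perfect closure leaves both sides unchanged — the residue complex $\bigoplus_x H^{*}(k(x),E(*))$ defining $CH^i(X,E)$ does not see purely inseparable extensions of residue fields, and $p$-inverted motivic cohomology, hence $H^{2i}_{\M}(X,E(i))$, is invariant under purely inseparable base extension. With $k$ perfect, Corollary~\ref{higher} identifies $H^{2i}_{\M}(X,E(i))$ with the twisted higher Chow group $CH^i(X,E,0)$, that is, with the cokernel of the boundary $z^i(X,E)_1\to z^i(X,E)_0$ of the twisted Bloch cycle complex. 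So it suffices to construct a natural surjection $CH^i(X,E,0)\to CH^i(X,E)$.

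Here is the key point: writing $n=\dim X$, both $CH^i(X,E,0)$ and $CH^i(X,E)=CH_{n-i}(X,E)$ are quotients of the same group of twisted cycles
$$Z:=\bigoplus_{x\in X^{(i)}}H^0(k(x),E)=z^i(X,E)_0,$$
the former by the subgroup $R_B$ of twisted Bloch relations — the image of $z^i(X,E)_1$ — and the latter by the subgroup $R_R$ of Rost relations — the image of the defining residue map $\bigoplus_{x\in X^{(i-1)}}H^1(k(x),E(1))\to Z$ of $CH^i(X,E)$. So it is enough to prove $R_B\subseteq R_R$; the identity of $Z$ then descends to a surjection $CH^i(X,E,0)\to CH^i(X,E)$, which is natural because it is the identity on cycles.

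Concretely, let $W\subseteq X\times\Delta^1$ be an integral subvariety of codimension $i$ meeting both faces $\{t=0\}$, $\{t=1\}$ in the expected dimension, let $\alpha\in H^0(k(W),E)$, and let $\partial(W,\alpha)\in Z$ be the twisted cycle obtained by intersecting $W$ with the faces and specializing $\alpha$ with multiplicities; I must show $\partial(W,\alpha)\in R_R$. Let $V\subseteq X$ be the closure of the image of $W$, so $\codim_X V\in\{i,i-1\}$. If $\codim_X V=i$, then $W=V\times\Delta^1$ and $\alpha$ is pulled back from $H^0(k(V),E)$ (as $k(V)$ is algebraically closed in $k(V)(t)=k(W)$), so the specializations of $\alpha$ to $V\times\{0\}\cong V\cong V\times\{1\}$ coincide and $\partial(W,\alpha)=0$. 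If $\codim_X V=i-1$, put $x=$ the generic point of $V$, so $x\in X^{(i-1)}$; then $W$ is a divisor in $V\times\Delta^1$, $k(W)/k(V)$ is finite, and $t,t-1\in k(W)^{*}$ (since $W$ lies in neither face). I claim that
$$\beta:=\operatorname{cor}_{k(W)/k(V)}\bigl(\{t/(t-1)\}\cup\alpha\bigr)\in H^1(k(x),E(1)),$$
where $\cup$ is the $K^M_*$-module product on the cycle module $H^{*}[E]$ and $\operatorname{cor}$ its transfer, satisfies $\bigoplus_{y}\partial_{v_y}(\beta)=\partial(W,\alpha)$, the sum running over codimension-$1$ points $y$ of $V$. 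This places $\partial(W,\alpha)$ in $R_R$ and finishes the proof.

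The main obstacle is this last identity — this is where the twisted case must genuinely be verified. It reduces to three properties of the cycle module $H^{*}[E]$: compatibility of residues with transfers, $\partial_{v_y}\operatorname{cor}_{k(W)/k(V)}=\sum_{w\mid y}\operatorname{cor}_{k(w)/k(y)}\partial_{v_w}$; the tame-symbol formula $\partial_{v_w}(\{f\}\cup\rho)=v_w(f)\cdot\overline{\rho}$ for $\rho$ of degree $0$, whose correction term vanishes because residues kill classes pulled back from $\mathcal O_{v_w}$ while $H^0(\mathcal O_{v_w},E)\xrightarrow{\sim}H^0(k(w),E)$ for a locally constant $E$; and the identification — classical when $E=\Z$, where $\beta$ becomes the rational function $\pi(0)/\pi(1)$ for $\pi$ the minimal polynomial of the generic fibre of $W$ — of the pushforward along $W\to V$ of $\operatorname{div}_W(t)-\operatorname{div}_W(t-1)$, twisted by $\alpha$, with the divisor of $\beta$ read off componentwise. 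The remaining bookkeeping is routine: one matches intersection multiplicities with valuations of $t$ and $t-1$, matches the branches of $W$ over a point $y$ with the places of $k(W)$ over $y$ (the places lying over the complement of $\Delta^1$ contribute nothing, since $t/(t-1)$ is a unit there), and passes to normalizations of $W$ and $V$ where these are not normal. Finally I would note that $R_B\subseteq R_R$ is generally a strict inclusion — the classes $\beta$ need not span $H^1(k(x),E(1))$ — which is what leaves room for the surjection to fail to be an isomorphism, in agreement with Theorem~\ref{counterexample}.
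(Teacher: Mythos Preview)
Your approach is essentially the same as the paper's: invoke Corollary~\ref{higher} to identify $H^{2i}_{\M}(X,E(i))$ with $CH^i(X,E,0)$, observe that both this group and $CH^i(X,E)$ are quotients of the same group of twisted cycles, and conclude surjectivity once the Bloch relations sit inside the Rost relations. The paper's proof is a single sentence asserting that ``the generators are the same, and so the natural homomorphism is surjective,'' with the inclusion $R_B\subseteq R_R$ left implicit (the remark following the proof spells out that the Bloch relations are precisely transfers $\operatorname{cor}_{F/k(x)}(\{u\}\cup\alpha)$ with $u\in F^{*}$, which visibly lie in $H^1(k(x),E(1))$). Your write-up makes this explicit via the cycle-module axioms, which is a reasonable thing to do. You also supply the reduction to perfect $k$ that the paper omits---Corollary~\ref{higher} is only stated over a perfect field, so this step is needed.
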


\begin{proof}
By Corollary \ref{higher}, $H^{2i}_{\M}(X,E(i))$ is isomorphic
to the twisted higher Chow group $CH^i(X,E,0)$. The generators
for this group are the same as for $CH^i(X,E)$, and so the natural
homomorphism $CH^i(X,E,0)\to CH^i(X,E)$ is surjective.
\end{proof}

We will see that this surjection is not always an isomorphism
(Theorem \ref{counterexample}). To describe the difference
in the definitions: the relations in $CH^i(X,E)$
are $H^1(k(x),E(1))$ for points $x$ of codimension $i-1$
in $X$, while the relations in $CH^i(X,E,0)$
are the transfers to $H^1(k(x),E(1))$ of products
of $H^0(F,E)$ with $H^1(F,\Z(1))=F^*$, for finite extensions
$F$ of $k(x)$.

\section{Equivariant Chow groups with coefficients}
\label{equivariant}

Following the definition of equivariant Chow groups \cite{TotaroChow,
EG, Totarobook}, Guillot, Di Lorenzo, and Pirisi
defined equivariant Chow groups with coefficients in a cycle module
\cite{Guillot, DLP}. We focus here on the special case
of equivariant Chow groups with
coefficients in a $\Z G$-module $M$, $CH^G_*(X,M)$. In particular,
that gives a definition
of the Chow groups of the classifying space with coefficients in $M$,
$CH^*(BG_k,M)=CH^*_G(\Spec k,M)$.

Namely, let $X$ be a scheme of finite type over a field $k$
with an action of a finite group $G$. Let $M$ be a $\Z G$-module.
Let $i$ be an integer.
Let $V$ be any representation of $G$ over $k$ such that $G$
acts freely on a Zariski-closed subset $S\subset V$
with $\codim(S\subset V)>\dim(X)-i$.
Then we define the $i$th {\it equivariant Chow group }with coefficients
in $M$ by:
$$CH^G_i(X,M)=CH_{i+\dim(V)}((X\times (V-S))/G,M).$$
By the same proof as for $CH_i^G(X)$ (using homotopy invariance
and the localization sequence), this group is independent
of the choice of $(V,S)$.

\begin{remark}
\label{space}
If $X$ is quasi-projective
over $k$, then $(X\times (V-S))/G$ is also a quasi-projective scheme, and so
its twisted Chow groups have been defined in section \ref{definition}.
If $X$ is not quasi-projective, then $(X\times (V-S))/G$ is (in general)
only an algebraic space. In that case,
one has to use that the definition
of twisted Chow groups works without change for algebraic spaces,
as in \cite[section 6.1]{EG}.
\end{remark}

When $X$ is smooth and equidimensional over $k$,
define
$$CH^i_G(X,M)=CH^G_{\dim(X)-i}(X,M).$$
(If $X$ is smooth but has components
of different dimensions, define $CH^i_G(X,M)$ to be the direct sum
of these groups for each $G$-orbit of components.)

As in the references above,
the formal properties of equivariant Chow groups with coefficients
follow immediately from the properties of Chow groups with
coefficients. In particular, we have proper pushforward, flat pullback,
homotopy invariance, the localization sequence, and (in the smooth case)
products. Namely, for smooth $k$-schemes, $CH^*_G(X,M)$ pulls back
under arbitrary $G$-equivariant morphisms,
and $CH^*_G(X,M)$ is a module over the ring $CH^*_GX$.

\section{The \etale cycle map}

We now construct the cycle map from twisted Chow groups
to \etale motivic cohomology. For smooth varieties, we define the cycle map
in full generality, without having to invert the exponential
characteristic of $k$.
In that generality, the construction is subtle:
\etale motivic cohomology need not satisfy the localization sequence
in the usual form, and we use instead a new purity result,
Corollary \ref{Ecodim}.
For singular varieties, the cycle map takes
values in \etale Borel-Moore motivic homology, which we only consider
with the exponential characteristic inverted.

\begin{theorem}
\label{etalecycle}
Let $X$ be a scheme of finite type over a field $k$,
and let $M$ be a locally constant \etale
sheaf on $X$.

(1) Suppose that $X$ is regular.
Then we define a natural homomorphism
$$CH^r(X,M)\to H^{2r}_{\et}(X,M(r)).$$

(2) Without assuming that $X$ is regular, suppose that the exponential
characteristic of $k$ acts invertibly on $M$.
Then we define a natural homomorphism
$$CH_j(X,M)\to H_{2j,\et}^{BM}(X,M(j))$$
to \etale motivic Borel-Moore homology. 
\end{theorem}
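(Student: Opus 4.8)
**The plan is to construct the cycle map by first defining it on the level of cycles and then checking that it kills the relations.** For part (1), with $X$ regular, I would begin with the Gersten-style complex defining $CH^r(X,M)$, whose term in codimension $r$ is $\oplus_{x\in X^{(r)}}H^0(k(x),M)$. The first task is to associate to a generator — a point $x$ of codimension $r$ together with a class in $H^0(k(x),M)$ — a class in $H^{2r}_{\et}(X,M(r))$. The natural device is a pushforward (Gysin) map along the inclusion of the closure $\overline{\{x\}}$, or rather along its regular locus, using a purity isomorphism. Since $X$ is regular but $\overline{\{x\}}$ need not be, I would work with the open subset $U\subset X$ where $\overline{\{x\}}$ is regular, use the localization sequence (in the form needed), and then push the class forward; the complement of $U$ has codimension $>r$ in $\overline{\{x\}}$, hence codimension $>r$ in $X$ along that subvariety, which is harmless for an $H^{2r}$ computation. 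The point is that without inverting $e$ the \etale motivic complexes $M(r)$ do not satisfy the usual localization/purity sequence, so here is where one invokes the new purity statement Corollary \ref{Ecodim} of the paper: it should supply exactly the Gysin map $H^0(k(x),M)\cong H^0(\overline{\{x\}}^{\mathrm{reg}},M)\to H^{2r}_{\et}(U,M(r))$ needed, with the correct shift $M(r-r)[-2r]\leftrightarrow M(r)$ matching the codimension $r$.

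**The second task is to verify well-definedness, i.e. that boundaries go to zero.** A boundary comes from a point $y$ of codimension $r-1$ and a class in $H^1(k(y),M(1))$, whose residues at the codimension-one points of $\overline{\{y\}}$ give the alleged relation. I would interpret the residue map on Galois cohomology as the composite of the Gysin boundary in the \etale motivic localization sequence for $\overline{\{y\}}$ (again after restricting to its regular locus and using Corollary \ref{Ecodim}), so that the sum of cycle classes of the residues is, by construction, the image of the boundary map $H^1_{\et}(\overline{\{y\}}^{\mathrm{reg}}-(\text{codim-1}),M(1))\to H^{2r}_{\mathrm{codim-}r,\et}(\ldots,M(r))$, which then dies when pushed into $H^{2r}_{\et}(X,M(r))$ because it already came from a class on the open part. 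In other words, the vanishing on relations is forced by the exactness of the very localization sequence used to define the classes. The bookkeeping of shifts and Tate twists, and checking that the residue homomorphism of Corollary \ref{residue} agrees with the Gysin boundary, is the routine-but-delicate part.

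**For part (2), with $e$ invertible on $M$, I would pass to \etale Borel–Moore motivic homology, where the machinery of Cisinski–Déglise applies cleanly.** Here the localization triangles and the purity isomorphisms $i^!\Z[1/e](b)_{O_v}\simeq \Z[1/e](b-1)_{k(v)}[-2]$ quoted in the introduction are available for arbitrary (not necessarily regular) finite-type $k$-schemes, so one does not need the ad hoc purity result. The construction is the standard one: for a closed integral subscheme $Z\subset X$ of dimension $j$, the fundamental class lives in $H^{BM}_{2j,\et}(Z,M(j))$ via $Z$'s normalization or via the dimension filtration, push it to $X$; the zero-cycle coefficient version uses $H^0(k(x),M)$ mapping into Borel–Moore homology of the point's closure. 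Well-definedness again follows from the localization long exact sequence: the image of $\oplus_{x\in X_{(j+1)}}H^1(k(x),M(1))$ under the residue maps is precisely the boundary in the Borel–Moore localization sequence, hence zero in $H^{BM}_{2j,\et}(X,M(j))$. Functoriality (the word ``natural'' in the statement) follows from compatibility of these Gysin/boundary maps with proper pushforward and with the construction of $CH_\bullet(X,M)$ as a quotient of a cycle complex.

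**The main obstacle I anticipate is part (1):** making the cycle map work integrally, without inverting $e$, when the \etale motivic complexes $M(r)$ fail to satisfy ordinary purity. Everything hinges on Corollary \ref{Ecodim} giving a usable Gysin-type map and a controlled localization sequence in exactly the range $(2r, r)$; identifying its boundary map with the Galois-cohomological residue of Corollary \ref{residue} — so that the complex computing $CH^r(X,M)$ literally embeds into the \etale motivic cohomology machinery — is the technical heart of the argument. Part (2) should be comparatively formal once one has Cisinski–Déglise's localization and absolute purity for $\Z[1/e]$-coefficients.
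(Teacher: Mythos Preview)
Your proposal is correct and follows essentially the same approach as the paper: define the map on generators via a Gysin/purity map using Corollary~\ref{Ecodim} (working on the regular locus of $\overline{\{x\}}$ and using the vanishing part of that corollary to handle the singular locus of codimension $>r$), then verify that boundaries die by interpreting the residue as the connecting map in the \etale localization sequence; for part~(2), the paper likewise relies on the Cisinski--D\'eglise localization sequence for Borel--Moore homology with $e$ inverted, filtering $X$ by dimension and lifting generators uniquely through the strata using the vanishing $H^i_{\et}(F,M(j))=0$ for $i,j<0$. The only point where the paper is slightly more concrete than your sketch is in part~(2), where for the relations it explicitly replaces $X$ by the closure of the codimension-$(b+1)$ point before invoking localization, but this is exactly the ``dimension filtration'' argument you outline.
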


\begin{remark}
In Theorem \ref{etalecycle}(2), we use a version of \etale motivic
Borel-Moore homology defined by Cisinski and D\'eglise. They also suggested
that the ``right'' definition
of \etale Borel-Moore motivic homology,
especially if we do not want to invert
the exponential characteristic of $k$, 
would be \etale
cohomology with coefficients in Bloch's higher Chow complexes
\cite[Remark 7.1.4]{CDetale}. In more detail, we should define
$$H_{i,\et}^{BM}(X,\Z(j))=H^{-i}_{\et}(X,D_X(-j)),$$
where $D_X(-j)$ is a shift of Bloch's cycle complex, numbered
by dimension of cycles. Namely,
for $U$ \etale over $X$, $D_U(-j)=z_{j,U}[2j]$, where $z_{j,U}$
is the cochain complex
in which $(z_{j,U})^i$ is the free abelian group on the set
of $(j-i)$-dimensional subvarieties
in $X\times \Delta^{-i}$ that meet all faces
in the expected dimension.

For a locally constant \etale sheaf $M$, we can therefore define
$$H_{i,\et}^{BM}(X,M(j))=H^{-i}_{\et}(X,M\otimes_{\Z}^L D_X(-j)).$$
I conjecture that
there is a cycle map
$$CH_i(X,M)\to H_{2i,\et}^{BM}(X,M(i)),$$
without having to invert the exponential characteristic of $k$.
\end{remark}

\begin{proof}
(1) We first define the cycle map on generators.
Let $Y$ be a subvariety of codimension $r$ in $X$,
and let $u$ be an element of $H^0(k(Y),E)$.
Write $i\colon Y\to X$ for the inclusion. First suppose
that $Y$ is regular. Then $H^0_{\et}(Y,E)\cong H^0(k(Y),E)$,
and so we can view $u$ as an element of $H^0_{\et}(Y,E)$
\cite[Tag 0BQI]{Stacks}.
Then the morphism $E[-2r]\to i^!E(r)$ in the derived category $D_{\et}(Y)$
(Corollary \ref{Ecodim}) gives a homomorphism
$H^0_{\et}(Y,E)\to H^0_{\et}(Y,i^!E(r)[2r])\cong H^{2r}_{\et}(Y,i^!E(r))
\cong H^{2r}_{Y,\et}(X,E(r))$. We can map further to $H^{2r}_{\et}(X,E(r))$,
as we want.

Next, let $Y$ be a subvariety of codimension $r$ in $X$,
possibly singular, and let $u$ be an element of $H^0(k(Y),E)$.
Let $Y'$ be the singular locus of $Y$, which has codimension
at least $r+1$ in $X$. The previous paragraph defines an element
of $H^{2r}_{Y-Y'}(X-Y',E(r))$, and so it suffices to show
that $H^{2r}_Y(X,E(r))\to H^{2r}_{Y-Y'}(X-Y',E(r))$ is an isomorphism.
Consider the exact sequence:
$$H^{2r}_{Y'}(X,E(r))\to H^{2r}_Y(X,E(r))\to
H^{2r}_{Y-Y'}(X-Y',E(r))\to H^{2r+1}_{Y'}(X,E(r)).$$
So it suffices to show that for a closed subset $Y'$ of codimension
everywhere at least $s$ in $X$ and any $a<s$, $H^{2a}_{Y'}(X,E(a))=
H^{2a+1}_{Y'}(X,E(a))$. This follows from Corollary \ref{Ecodim},
which says in terms of $f\colon Y'\inj X$ that
$\tau_{\leq 2a+1}f^!E(a)=0$.

Next, let us show that this map on generators passes
to a well-defined homomorphism $CH^r(X,E)\to H^{2r}_{\et}(X,E(r))$.
A relation is given by a closed subvariety $Y$ of codimension $r-1$
in $X$ and an element $u\in H^1(k(Y),E(1))$. We want to show
that $\partial u\in Z^r(X,E)$ maps to zero
in $H^{2r}_{\et}(X,E(r))$. More precisely, we will show
that $u$ maps to zero in $H^{2r}_{Y,\et}(X,E(r))$.
Write $i\colon Y\inj X$ for
the inclusion. 

Here $u$ comes from an element
of $H^1_{\et}(Y-D,E(1))$ for some reduced divisor $D$ in $Y$,
say $D=D_1+\cdots+D_s$. Let $D_{\sing}$ be the singular locus of $D$,
so $D-D_{\sing}=\coprod_{j=1}^s D_j^0$ for some regular codimension-1
subvarieties $D_j^0$ of $Y-D_{\sing}$. Since $Y-D$ is a regular subvariety
of the regular scheme $X-D$, the morphism $E(1)[-2r+2]\to i^!E(2r)$
in $D_{\et}(Y-D)$ (Corollary \ref{Ecodim}) gives a Gysin
homomorphism
$H^1_{\et}(Y-D,E(1))\to H^{2r-1}_{Y-D}(X-D,E(r))$. So we can view $u$
as an element of the latter group. Consider the exact sequence
of \etale cohomology groups:
$$H^{2r-1}_{Y-D}(X-D,E(r))\to \oplus_{j=1}^s H^{2r}_{D_j^0}(X-D_{\sing},
E(r))\to H^{2r}_{Y-D_{\sing}}(X-D_{\sing},E(r)).$$
Since $D_j^0$ is a regular subscheme of the regular scheme $X-D_{\sing}$,
we have the purity isomorphism $H^0(D_j^0,E)\cong
H^{2j}_{D_j^0}(X-D_{\sing},E(r))$ (Corollary \ref{Ecodim}).
So the exact sequence above shows that $\partial u$
in $Z^r(X,E)$ maps to zero in $H^{2r}_{Y-D_{\sing}}(X-D_{\sing},E(r))$.
Finally, the restriction map $H^{2r}_Y(X,E(r))\to
H^{2r}_{Y-D_{\sing}}(X-D_{\sing},E(r))$ is an isomorphism, since
$D_{\sing}$ has codimension at least $r+1$ in $X$ (by Corollary
\ref{Ecodim} again). So the image of $\partial u$
in $H^{2r}_Y(X,E(r))$ is zero, as we want. Thus the cycle map
$CH^i(X,E)\to H^{2i}_{\et}(X,E(r))$ is well-defined.

(2) Now allow $X$ to be singular, but assume
that the exponential characteristic $e$ of $k$
is invertible in $M$. In this case,
Cisinski and D\'eglise defined one version
of the \etale motivic Borel-Moore homology
of $X$ over $k$
\cite[Remark 7.1.12(4)]{CDetale}. Write $f\colon X\to \Spec k$.
Namely, they set
$$H_i^{BM}(X,\Z[1/e](j))=H^{-i}_{\et}(X,B_X(-j))$$
for an object $B_X(-j)$ in $D_{\et}(X)$
(the object $f^{!}1_k(-j)$ in the category of $h$-motives
$DM_h(X,\Z[1/e])$, which has a functor $R\alpha_*$ to $D_{\et}(X)$).
Therefore, we can define twisted Borel-Moore homology by
$$H_i^{BM}(X,M(j))=H^{-i}_{\et}(X,M\otimes_{\Z}^L B_X(-j)),$$
where we assume that $M$ is a locally constant \etale sheaf
on which $e$ acts invertibly.

We have
$H^i_{\et}(X,M(j))\cong H_{2n-i,\et}^{BM}(X,M(n-j))$ for $X$ smooth over $k$,
everywhere of dimension $n$. Also, for a closed subscheme
$S$ of a scheme $X$ over $k$, we have a localization exact sequence:
\begin{multline*}
\cdots\to H_{i,\et}^{BM}(S,M(j))\to H_{i,\et}^{BM}(X,M(j))\\
\to H_{i,\et}^{BM}(X-S,M(j))
\to H_{i-1,\et}^{BM}(S,M(j))\to\cdots.
\end{multline*}

Let $X_a$ be the ``union of all subvarieties of dimension at most $a$
in $X$'', as in the proof of Corollary \ref{higher}.
By taking a direct limit of localization sequences, we have
an exact sequence for any $i,j,a$:
\begin{multline*}
\cdots\to \oplus_{x\in X_{(a)}}H^{2a-i}_{\et}(k(x),M(a-j))
\to H_{i,\et}^{BM}(X-X_{a-1},M(j))\\
\to H_{i,\et}^{BM}(X-X_a,M(j))
\to \oplus_{x\in X_{(a)}}H^{2a-i+1}_{\et}(k(x),M(a-j))\to\cdots.
\end{multline*}

We first define the cycle map
$CH_b(X,M)\to H_{2b,\et}^{BM}(X,M(b))$ on generators.
So let $x$ be a point in $X$ whose closure has dimension $b$,
and let $u$ be an element of $H^0(k(x),M)$. By the localization sequence
above, this maps to an element $u$ in $H_{2b,\et}^{BM}(X-X_{b-1},M(b))$.
For each $0\leq c\leq b-1$, the restriction map
$$H_{2b,\et}^{BM}(X-X_{c-1},M(b))\to H_{2b,\et}^{BM}(X-X_{c},M(b))$$
is an isomorphism by the localization sequence, using that
$H^{i}_{\et}(F,M(j))=0$ for fields $F$ when $i$ and $j$ are negative.
(Indeed, the complex
of \etale sheaves $\Z[1/e](j)$
with $j<0$ is concentrated
in cohomological degree 1 by section \ref{definition},
and so $M(j)=M\otimes_{\Z}^L\Z[1/e](j)$
is concentrated in degrees $\geq 0$.) Therefore, $u$ comes from a unique
element of $H_{2b,\et}^{BM}(X,M(b))$.

It remains to show that the cycle map vanishes on the relations defining
$CH_b(X,M)$. So let $x$ be a point in $X$ whose closure
has dimension $b+1$, and let $u$ be an element of $H^1(k(x),M(1))$.
We want to show that the boundary $\partial u$ in $Z_b(X,M)$ maps to zero
in $H_{2b,\et}^{BM}(X,M(b))$. It suffices to prove this with $X$ replaced
by the closure of the point $x$;
that is, we can assume that $X$ is a variety
of dimension $b+1$ over $k$. As above, we have the localization sequence
$$H_{2b+1,\et}^{BM}(X-X_b,M(b))
\to \oplus_{x\in X_{(b)}}H^{0}_{\et}(k(x),M)
\to H_{2b,\et}^{BM}(X-X_{b-1},M(b)).$$
Since $X$ has dimension $b+1$, the first group here
is $H^1_{\et}(k(X),M(1))$. So the sequence shows that $\partial u$
in $\oplus_{x\in X_{(b)}}H^{0}_{\et}(k(x),M)$
maps to zero in $H_{2b,\et}^{BM}(X-X_{b-1},M(b))$. The previous paragraph
defines the image of $\partial u$ in the finer group
$H_{2b,\et}^{BM}(X,M(b))$. But we showed in the previous paragraph
that the restriction from 
$H_{2b,\et}^{BM}(X,M(b))$ to $H_{2b,\et}^{BM}(X-X_{b-1},M(b))$
is an isomorphism. So $\partial u$ is zero in $H_{2b,\et}^{BM}(X,M(b))$.
Thus we have a well-defined homomorphism $CH_b(X,M)\to
H_{2b,\et}^{BM}(X,M(b))$.
\end{proof}

\section{The cycle map for complex varieties}

\begin{theorem}
Let $X$ be a scheme of finite type over $\C$, $M$ a locally constant \etale
sheaf on $X$, and $i$ an integer. Then we define a natural homomorphism
$$CH_i(X,M)\to H_{2i}^{BM}(X,M)$$
to Borel-Moore homology (using the classical topology). For $X$ smooth
over $\C$, we can rephrase this as a homomorphism
$$CH^j(X,M)\to H^{2j}(X,M).$$
\end{theorem}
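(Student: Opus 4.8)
The plan is to mimic the construction of the \'etale cycle map from Theorem \ref{etalecycle}(2), replacing \'etale motivic Borel--Moore homology with classical Borel--Moore homology of the space $X(\C)$ with coefficients in the topological local system underlying $M$. First I would recall that a locally constant \'etale sheaf $M$ on $X$ restricts, via the comparison between the \'etale site of a finite-type $\C$-scheme and the classical topology, to a genuine local system (sheaf of abelian groups locally isomorphic to a constant sheaf) on the topological space $X(\C)$; write $M^{\mathrm{an}}$ for this. Borel--Moore homology $H^{BM}_*(X,M)$ then means $H^{BM}_*(X(\C),M^{\mathrm{an}})$, defined for instance as $H^{-*}_c$-dual, or as the homology of locally finite singular chains with twisted coefficients. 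The key inputs I need about this theory are exactly the ones used for the \'etale version: (a) a localization long exact sequence for a closed subset $S\subset X$ with open complement $X-S$; (b) a purity/fundamental-class statement, namely that for a point $x\in X_{(b)}$ with closure $V$, and a section $u\in H^0(k(x),M)$ (equivalently a section of $M^{\mathrm{an}}$ over a dense open smooth subset of $V(\C)$), one gets a canonical class in $H^{BM}_{2b}(V,M)$, using that a $b$-dimensional complex variety has real dimension $2b$ and its smooth locus carries a fundamental class in Borel--Moore homology; and (c) the vanishing $H^{BM}_{i}(F,M)=0$ in the appropriate range for spectra of fields over $k$, which here is trivial since $\Spec F$ analytifies to a point (or, after spreading out, to a space of the right dimension) so that only $H^{BM}_0$ survives.

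With these in hand the construction is parallel to the proof of Theorem \ref{etalecycle}(2). On generators: given $x\in X_{(b)}$ and $u\in H^0(k(x),M)$, restrict to the open subset $X-X_{b-1}$ (union of all subvarieties of dimension $\le b-1$), where the closure of $x$ is smooth of pure complex dimension $b$, and form the twisted fundamental class in $H^{BM}_{2b}(X-X_{b-1},M)$. Then climb back up the tower: for each $c\le b-1$ the restriction $H^{BM}_{2b}(X-X_{c-1},M)\to H^{BM}_{2b}(X-X_c,M)$ is an isomorphism by the localization sequence, because the error terms are $H^{BM}_{2b}$ and $H^{BM}_{2b-1}$ of unions of $\le c$-dimensional varieties, and Borel--Moore homology of a variety of complex dimension $c$ vanishes above degree $2c<2b-1$. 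Hence $u$ lifts uniquely to a class in $H^{BM}_{2b}(X,M)$, and this defines the cycle map on the free group $Z_i(X,M)$.

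To see it descends to $CH_i(X,M)$ I must kill the relations: given $x\in X_{(b+1)}$ and $u\in H^1(k(x),M(1))$, show that $\partial u\in Z_b(X,M)$ maps to $0$. Again I can replace $X$ by the $(b{+}1)$-dimensional closure of $x$. Now $H^1(k(x),M(1))=H^1(k(x),M\otimes^L_\Z \C(\C)^\times\text{-shift})$; concretely an element is represented by a section of $M^{\mathrm{an}}$ times a rational function on a smooth open $U\subset X(\C)$, and its residue is the usual divisor-with-twisted-coefficients. The localization sequence for $X_b\subset X$ gives $H^{BM}_{2b+1}(X-X_b,M)\to \oplus_{x\in X_{(b)}}H^0(k(x),M)\to H^{BM}_{2b}(X-X_{b-1},M)$, and since $X-X_b$ is smooth of pure complex dimension $b+1$ the first group is $H^{BM}_{2b+1}(X-X_b,M)$; the image of this group is precisely the subgroup of principal divisors with coefficients, so $\partial u$ dies in $H^{BM}_{2b}(X-X_{b-1},M)$, hence in $H^{BM}_{2b}(X,M)$ by the isomorphism established above. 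The smooth reformulation $CH^j(X,M)\to H^{2j}(X,M)$ follows by Poincar\'e duality: for $X$ smooth of pure complex dimension $n$, $H^{BM}_{2i}(X,M)\cong H^{2n-2i}(X,M)$, and $CH_i=CH^{n-i}$.

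The main obstacle is item (b): making the twisted fundamental class and the compatibility of the cycle map with the Milnor $K_1$-action (the residue relation) genuinely rigorous with twisted coefficients, rather than the untwisted case where one just cites standard intersection theory. One clean way to handle this, which I would adopt, is to deduce the complex cycle map from the \'etale one of Theorem \ref{etalecycle} together with the comparison isomorphism between \'etale motivic (Borel--Moore) cohomology with finite, or with $\Z/\ell$, coefficients and classical (Borel--Moore) homology over $\C$ --- but since $M$ need not be torsion, one must instead argue directly, reducing to the torsion and the torsion-free cases and, in the torsion-free case, spreading $M^{\mathrm{an}}$ out to a finite covering space on which it becomes constant and invoking Lemma \ref{induced}'s topological analogue. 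I expect the bookkeeping here, especially the independence of all choices (open subsets, spreading out), to be the only real work; everything else is a transcription of the \'etale argument.
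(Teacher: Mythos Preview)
Your construction on generators matches the paper's. The gap is in the relations step. Your localization sequence gives
\[
H^{BM}_{2b+1}(X-X_b,M)\to \bigoplus_{x\in X_{(b)}}H^0(k(x),M)\to H^{BM}_{2b}(X-X_{b-1},M),
\]
and you assert that the image of the first map is ``precisely the subgroup of principal divisors with coefficients'', i.e.\ contains the image of the algebraic residue map from $H^1(k(X),M(1))$. This is the entire content of the theorem and is not obvious. In the \'etale setting of Theorem \ref{etalecycle}(2) it works because $H^{BM,\et}_{2b+1}(X-X_b,M(b))$ literally \emph{is} $H^1(k(X),M(1))$. Topologically, however, $H^{BM}_{2b+1}(X-X_b,M)\cong H^1(X-X_b,M^{\mathrm{an}})$ is a finitely generated group (for $M$ finitely generated), whereas $H^1(k(X),M(1))$ is typically enormous (for $M=\Z$ it is $k(X)^*$), and there is no evident map identifying their boundaries.

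You correctly flag this as the obstacle and propose two escapes. The first---deduce from the \'etale cycle map via comparison with finite coefficients---is exactly what the paper does. Your second escape for non-torsion $M$, namely spreading out to a finite cover where $M$ becomes constant and invoking a topological analogue of Lemma \ref{induced}, does not work: $M$ becoming constant on a cover $Y\to X$ does not make $M$ isomorphic to a pushforward from $Y$, so Lemma \ref{induced} gives no integral grip on $CH_i(X,M)$ or on $H^{BM}_{2i}(X,M)$.

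The paper's argument is a divisibility trick you are missing. After reducing to $M$ finitely generated over $\Z$, the target $H^{BM}_{2i}(X,M)$ is itself finitely generated, since $M\otimes^L_{\Z} D_X$ is constructible. By Theorem \ref{etalecycle}, $\partial t$ already maps to zero in \'etale Borel--Moore homology, hence (by the comparison isomorphism over $\C$ for torsion coefficients) to zero in $H^{BM}_{2i}(X,M/n)$ for every $n>0$. The coefficient sequence $nM\to M\to M/n$ then shows that $\partial t$ lies in the image of $H^{BM}_{2i}(X,nM)$ for all $n$; when $M$ is $\Z$-torsion-free this says $\partial t$ is divisible in a finitely generated group, hence zero. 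A short extra argument (passing to the torsion-free submodule $aM$ where $a$ is the order of the torsion subgroup) handles the general case.
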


\begin{proof}
Let $X$ be a locally compact space.
To define Borel-Moore homology with twisted coefficients, recall
that $H_i^{BM}(X,\Z)$ can be described as $H^{-i}(X,D_X)$, where
$D_X$ is the dualizing complex and we consider
cohomology in the classical topology \cite[Equation IX.4.1]{Iversen}.
For a locally constant \etale sheaf $M$ on $X$,
we can define $H_i^{BM}(X,M)=H^{-i}(X,M\otimes^L_{\Z}D_X)$.

Let $X$ be a scheme of finite type over $\C$.
We first define the cycle map on generators. Let $y$ be a point
of the scheme $X$
whose closure $Y$ has dimension $i$,
and let $u$ be an element of $H^0(k(y),M)$. Let $S$ be the singular set
of $Y$. Since $S$ has dimension at most $i-1$ in $X$,
the localization sequence
$$\to H_{2i}^{BM}(S,M)\to H_{2i}^{BM}(X,M)\to H_{2i}^{BM}(X-S,M)
\to H_{2i-1}^{BM}(S,M)\to $$
shows that $H_{2i}^{BM}(X,M)\to
H_{2i}^{BM}(X-S,M)$ is an isomorphism. So it suffices to define an element
of $H_{2i}^{BM}(X-S,M)$ associated to $u$. Replacing $X$ by $X-S$,
we can assume that $Y$ is smooth. Then $H^0(Y,M)\cong
H^0(k(y),M)$ since $Y$ is normal
\cite[Tag 0BQI]{Stacks}.
So we can view $u$ as an element of $H^0(Y,M)
\cong H_{2i}^{BM}(Y,M)$. Proper pushforward
gives a homomorphism
$H_{2i}^{BM}(Y,M)\to H_{2i}^{BM}(X,M)$.
So $u$ gives an element of $H_{2i}^{BM}(X,M)$, as we want.

It remains to show that for a point $w$ of the scheme $X$
whose closure has dimension $i+1$ in $X$ and an element $t$
of $H^0(k(w),M(1))$, the boundary of $t$ maps to zero in $H_{2i}^{BM}(X,M)$.
Because Galois cohomology commutes with direct limits, $t$
comes from a cohomology class in some Galois submodule
of $M$ that is finitely generated as a $\Z$-module.
So we can assume that $M$ is finitely generated as a $\Z$-module, and we want
to show that $\partial t$ maps to zero in $H_{2i}^{BM}(X,M)$.
We can assume that $X$ is connected. In this case,
in terms of the classical topology, $M$ is a module
for the fundamental group $\pi_1X$ that factors through some finite quotient
group $G$ of $\pi_1X$.

Since the dualizing complex $D_X$ is constructible,
so is $M\otimes^L_{\Z}D_X$,
and hence the group $H_{2i}^{BM}(X,M)\cong H^{-2i}(X,M\otimes^L_{\Z}D_X)$
is finitely generated. So if we can show
that $\partial t$ in $H_{2i}^{BM}(X,M)$ is divisible, then it is zero,
as we want. The image of $\partial t$ is zero in $H_{2i}^{BM}(X,M/n)$
for every positive integer $n$, because we know that $\partial t$
maps to zero in $H_{2i,\et}^{BM}(X,M(i))$ (Theorem \ref{etalecycle}),
hence in $H_{2i,\et}^{BM}(X,M/n(i))$,
which (as we are over $\C$) can be identified
with $H_{2i}^{BM}(X,M/n)$. By the exact sequence
$$H_{2i}^{BM}(X,nM)\to H_{2i}^{BM}(X,M)\to H_{2i}^{BM}(X,M/n),$$
it follows that $\partial t$ lies in the image 
of $H_{2i}^{BM}(X,nM)$ for every positive integer $n$.
If $M$ is $\Z$-torsion-free
(so $nM\cong M$), then
it follows that $\partial t$ is divisible, hence zero as we want.

In general, let $a$ be the order of the torsion subgroup of $M$.
Then $aM$ is a torsion-free $\Z G$-submodule of $M$. We know that
$\partial t$ in $H_{2i}^{BM}(X,M)$ is in the image of $H_{2i}^{BM}(X,naM)$
for every positive integer $n$. So for every positive integer $n$,
$\partial t$ is the image of an element of $H_{2i}^{BM}(X,aM)$
that is divisible by $n$, and hence $\partial t$ itself is divisible by $n$.
Thus $\partial t=0$ in $H_{2i}^{BM}(X,M)$, as we want. Finally,
for $X$ smooth over $k$, the cycle map we have constructed can be
rewritten as $CH^j(X,M)\to H^{2j}(X,M)$, via Poincar\'e duality.
\end{proof}

\section{Twisted Chow groups and transfers}

We now observe that twisted Chow groups
have explicit generators in terms of transfers.
This generalizes Merkurjev--Scavia's description
of the negligible subgroup of group cohomology in degree 2
\cite[Theorem 1.3, Corollary 4.2]{MS}.
It is also related to Theorem \ref{coflasque},
since it shows again that twisted Chow groups
are a quotient of the usual Chow groups of a suitable
covering space of $X$ (possibly with several connected
components).

\begin{theorem}
\label{transfer}
Let $X$ be a $k$-scheme of finite type, $G$ a finite group,
$Y\to X$ a $G$-torsor, and $M$ a $\Z G$-module. Then $CH_i(X,M)$
is generated by the images of the homomorphisms
$$M^H\otimes_{\Z} CH_i(Y/H)\to CH^0(Y/H,M)\otimes_{\Z} CH_i(Y/H)
\to CH_i(Y/H,M)\to CH_i(X,M)$$
over all subgroups $H$ of $G$,
where the last map is the transfer or pushforward.

More strongly, one does not need to use all subgroups of $G$.
For each element $x\in M$, let $G_x$ be the centralizer
of $x$ in $G$. Then $CH_i(X,M)$ is generated by the elements
$\tr_{G_x}^G(xy)$ for all $x\in M$ and all $y\in CH_i(Y/G_x)$.
\end{theorem}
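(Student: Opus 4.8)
The plan is to work directly with the defining presentation of $CH_i(X,M)$ as the cokernel of the residue map $\oplus_{x\in X_{(i+1)}} H^1(k(x),M(1))\to \oplus_{x\in X_{(i)}} H^0(k(x),M)$, and to show that every cycle class on the right is a sum of transferred terms of the required shape. A generator on the right is a class $u\in H^0(k(x),M)=M^{H}$ for a codimension-one-less... more precisely a point $x\in X_{(i)}$ with residue field $F=k(x)$ whose action on $M$ factors through $G$, and $H=\mathrm{im}(\Gal(F_s/F)\to G)$ is the decomposition subgroup, so $u\in M^H$. The associated cycle in $CH_i(X,M)$ is the pushforward, along $\overline{\{x\}}\to X$, of the class in $CH_i(\overline{\{x\}}\times_X Y/H,M)$ corresponding to $u$ and the generic point. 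Pulling the torsor back, $\overline{\{x\}}$ admits (over a dense open) an $H$-torsor, i.e.\ a subvariety of $Y/H'$ for the relevant subgroup; so up to the localization sequence and proper pushforward (both of which I may invoke from Rost's theory as recalled in section~\ref{definition}), $u$ lies in the image of $M^H\otimes CH_i(Z)\to CH_i(Z,M)\to CH_i(X,M)$ for a suitable variety $Z$ mapping finitely to $X$, namely $Z=\overline{\{x\}}$, which is a subvariety of $Y/H$. This gives the first, weaker assertion.

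For the sharper statement, the point is to decompose an arbitrary $u\in M^H$ into a sum of ``monogenic'' pieces. First I would reduce to the case where $M$ is finitely generated over $\Z$, using that $H^0(k(x),M)$ commutes with filtered colimits of locally constant sheaves. Then for $u\in M^H$, consider the $G$-set $G\cdot u\subset M$ and the $\Z[G]$-submodule $N=\Z[G]\cdot u$ it generates; this is a quotient of the permutation module $\Z[G/G_u]$, where $G_u$ is the stabilizer (centralizer) of $u$. The class of $u$ in $CH_i(X,M)$ factors through $CH_i(X,N)$, and by Lemma~\ref{induced} applied to the covering $Y/G_u\to X$ together with the surjection $\Z[G/G_u]=\Z[G]\otimes_{\Z[G_u]}\Z\twoheadrightarrow N$ (an instance of the induced-module / permutation-module situation), $CH_i(X,N)$ is generated by transfers from $CH_i(Y/G_u)$. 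Tracking the image of the generator $1\otimes 1$ under these maps shows that the class of $u$ is exactly $\tr^G_{G_u}(u\cdot y)$ for suitable $y\in CH_i(Y/G_u)$. Summing over the (finitely many, after reduction to finitely generated $M$) generators $u$ appearing in a given cycle gives the result.

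The main obstacle I anticipate is the bookkeeping in the reduction step: a codimension-one point $x\in X_{(i)}$ with a class $u\in H^0(k(x),M)$ does not literally come with a $G_u$-torsor structure on $\overline{\{x\}}$, only a decomposition-group action, so one must pass to the normalization of $\overline{\{x\}}$ in a suitable finite extension (equivalently, the component of $\overline{\{x\}}\times_X Y/G_u$ picked out by $u$) and then use proper pushforward through a finite morphism together with the fact that, over the generic point, the relevant component of $Y/H'$ maps finitely and birationally onto $\overline{\{x\}}$. Making this compatible with the factorization through $N=\Z[G]u$ — i.e.\ checking that the ``$u$-component'' of the fibered product is precisely $Y/G_u$ restricted over $\overline{\{x\}}$, not some smaller cover — is where I would be most careful. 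Everything else (the localization sequence, homotopy invariance to move between models, Lemma~\ref{induced}) is already available from the cycle-module formalism recalled above, and the transfer appearing in the statement is literally Rost's proper pushforward for the finite morphism $Y/G_u\to X$, so no new functoriality needs to be built.
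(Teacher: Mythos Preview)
Your argument for the first assertion is essentially the paper's: pick a generator $(z,u)$ with $z\in X_{(i)}$ and $u\in H^0(k(z),M)=M^H$ where $H$ is the image of $\Gal(k(z)_s/k(z))\to G$; then $Y/H\to X$ has a section $z_1$ over $z$, and the class of $(z,u)$ is the pushforward of $u\cdot[Z_1]$ from $M^H\otimes CH_i(Y/H)$. Your worry about ``the normalization of $\overline{\{x\}}$ in a suitable finite extension'' is unnecessary---the closure $Z_1$ of $z_1$ in $Y/H$ already maps birationally to $\overline{\{z\}}$, which is all you need.

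For the stronger statement, your route via the permutation module $\Z[G/G_u]$ and Lemma~\ref{induced} works, but it is more elaborate than necessary, and one step is phrased as an overclaim: ``$CH_i(X,N)$ is generated by transfers from $CH_i(Y/G_u)$'' would require surjectivity of $CH_i(X,\Z[G/G_u])\to CH_i(X,N)$, which is not what Lemma~\ref{induced} provides and can fail in general (cf.\ Lemma~\ref{negative}). You only need---and only use---that the \emph{single} class $(z,u)$ lifts to $(z,eG_u)$, which it does since $H\subset G_u$. The paper bypasses all of this with a two-line projection-formula argument: from the first part, a generator has the form $\tr_H^G(xy)$ with $x\in M^H$ and $y\in CH_i(Y/H)$; since $H\subset G_x$, write
\[
\tr_H^G(xy)=\tr_{G_x}^G\tr_H^{G_x}\big(\res^{G_x}_H(x)\cdot y\big)=\tr_{G_x}^G\big(x\cdot \tr_H^{G_x}y\big),
\]
which is already of the required shape with $z=\tr_H^{G_x}y\in CH_i(Y/G_x)$. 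No reduction to finitely generated $M$, no factoring through a submodule $N$, and no appeal to Lemma~\ref{induced} are needed.
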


\begin{proof}
The group $CH_i(X,M)$ is generated by the groups $H^0(k(z),M)$
for all points $z$ in $X$ with closure $Z$ of dimension $i$.
Given such a point, let $H$ be the image of the composition
$\pi_1(\Spec k(z))\to \pi_1X\to G$. Then the restriction of the covering map
$f\colon Y/H\to X$ over $z$ has a section, $z_1\in Y/H$. Let $Z_1$
be its closure in $Y/H$ (which maps birationally to $Z$).
Let $u$ be any element of $H^0(k(z),M)=M^H$. Then the element
of $CH_i(X,M)$ associated to the pair $(z,u)$
is the pushforward of the product $u[z_1]$
in $M^H\otimes_{\Z}CH_i(Y/H)\to CH^0(Y/H,M)\otimes_{\Z} CH_i(Y/H)
\to CH_i(Y/H,M)$.

That proves the first statement, that $CH_i(X,M)$ is generated by
$\tr_{H}^G(xy)$ for all subgroups $H$ in $G$,
$x\in M^H$ and $y\in CH_i(Y/H)$. Here $H$ is contained
in the centralizer $G_x$ of $x$ in $G$. By the projection formula,
\begin{align*}
\tr_H^G(xy)&=\tr_{G_x}^G \tr_H^{G_x}(\res^{G_x}_H(x) y)\\
&=\tr_{G_x}^G(x\tr_H^{G_x}y).
\end{align*}
Thus $CH_i(X,M)$ is generated by the elements $\tr_{G_x}^G(xz)$
for all $x\in M$ and all $z\in CH_i(Y/G_x)$.
\end{proof}

To deduce Merkurjev--Scavia's statement, we need the following
simple observation.

\begin{lemma}
\label{support}
Let $X$ be a smooth $k$-variety, $G$ a finite group,
$Y\to X$ a $G$-torsor, and $M$ a $\Z G$-module.
Assume that the exponential characteristic of $k$
acts invertibly on $M$.
Then, for $i>0$, the cycle class homomorphism $CH^i(X,M)
\to H^{2i}_{\et}(X,M(i))$
maps into the kernel of restriction to $H^{2i}(k(X),M(i))$.
For $i=1$, the image of $CH^1(X,M)\to H^2_{\et}(X,M(1))$ is equal
to the kernel of restriction to $H^{2}(k(X),M(1))$.
\end{lemma}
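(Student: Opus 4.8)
The plan is to analyze the cycle class map through the coniveau/localization filtration on \'etale motivic cohomology, exactly as in the construction of Theorem \ref{etalecycle}(1). For the first assertion, recall that in that construction an element of $CH^i(X,M)$ represented by a subvariety $Y$ of codimension $i$ and a class $u\in H^0(k(Y),M)$ is sent to a class supported on $Y$, i.e.\ coming from $H^{2i}_{Y,\et}(X,M(i))$. First I would note that, since $Y$ has codimension $i>0$, the group $H^{2i}_{Y,\et}(X,M(i))$ maps to $H^{2i}_{\et}(X,M(i))$ through the coniveau filtration piece $N^iH^{2i}_{\et}(X,M(i))$; more to the point, restriction of such a supported class to the generic point $\Spec k(X)$ factors through $H^{2i}_{\et}(\Spec k(X)- (Y\cap \Spec k(X)),M(i))$, but $Y$ meets $\Spec k(X)$ in the empty set (as $Y$ is a proper closed subset), so the restriction is zero. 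Hence every generator of $CH^i(X,M)$ dies in $H^{2i}(k(X),M(i))$, and so the whole image does. This part should be essentially immediate from how the cycle map was defined.

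For the equality statement when $i=1$, I would use the Cisinski--D\'eglise localization sequence for \'etale motivic cohomology (stated in section \ref{definition}), or equivalently the Gersten-type complex: the kernel of $H^2_{\et}(X,M(1))\to H^2(k(X),M(1))$ is computed by a direct limit over nonempty open $U\subseteq X$ of $H^2_{X- U,\et}(X,M(1))$, and by excision plus purity (Corollary \ref{Ecodim}) this is controlled by contributions from codimension-$1$ points. Concretely, taking $U$ shrinking, a class killed on $k(X)$ is supported on some divisor $D=D_1+\cdots+D_s$; away from $D_{\sing}$ each $D_j^0$ is regular of codimension $1$, and purity gives $H^{2}_{D_j^0,\et}(X- D_{\sing},M(1))\cong H^0(D_j^0,M)\cong H^0(k(D_j),M)$. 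Chasing the localization sequence
\[
H^1_{\et}(k(X),M(1))\;\longrightarrow\;\bigoplus_j H^0(k(D_j),M)\;\longrightarrow\;H^2_{\et}(X- D_{\sing},M(1))
\]
together with the fact that $H^2_{\et}(X,M(1))\to H^2_{\et}(X- D_{\sing},M(1))$ is injective (since $D_{\sing}$ has codimension $\geq 2$, again by Corollary \ref{Ecodim}), shows that any class vanishing on $k(X)$ is the image of an element $\sum_j (D_j,u_j)$ with $u_j\in H^0(k(D_j),M)$ — which is exactly a cycle in $Z^1(X,M)$ — and conversely that the class of such a cycle in $H^2_{\et}(X,M(1))$ agrees with its image under the cycle map. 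Thus the image of $CH^1(X,M)\to H^2_{\et}(X,M(1))$ equals the kernel of restriction to $k(X)$.

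The main obstacle is making the purity and excision inputs for $i=1$ precise \emph{without inverting the exponential characteristic}: this is why the statement invokes the hypothesis that $e$ acts invertibly on $M$ — so that one may freely use the localization triangle for $\Z[1/e](1)$-coefficients and its compatibility with the cycle map as set up in the proof of Theorem \ref{etalecycle}(1). The one subtlety I would be careful about is the identification of the boundary map $H^1(k(X),M(1))\to\bigoplus_j H^0(k(D_j),M)$ in the localization sequence with the residue map $\partial$ defining $CH^1(X,M)$; this compatibility is part of what makes the cycle map well-defined (it is implicitly used already in the proof of Theorem \ref{etalecycle}(1)), so I would quote it from there rather than reprove it. Everything else is a diagram chase in the already-established localization sequence for \'etale motivic cohomology.
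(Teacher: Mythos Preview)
Your proposal is correct and follows essentially the same route as the paper: the first assertion is immediate from the fact that the cycle map produces classes supported in codimension $\geq i$, and for $i=1$ both you and the paper argue by choosing a divisor $D$ supporting the class, removing its singular locus (which does not change $H^2_{\et}$ by purity in codimension $\geq 2$), and then reading off from the localization sequence for the regular divisors $D_j^0$ that the class comes from $\bigoplus_j H^0(k(D_j),M)$, i.e.\ from $CH^1(X,M)$. The paper's write-up is slightly terser and does not explicitly raise the compatibility of the boundary map with the residue $\partial$ (it is not needed here, since one only has to show that the generators $H^0(k(D_j),M)$ land in the image of $CH^1(X,M)$, which is immediate from the definition of the cycle map), but otherwise the arguments coincide.
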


\begin{proof}
For $i>0$, it is clear that the homomorphism $CH^i(X,M)\to H^{2i}_{\et}(X,M(i))$
maps into the kernel of restriction to $H^{2i}(k(X),M(i))$, because it gives
classes supported on a codimension-$i$ subset of $X$.

Let $i=1$. Let $u$ be an element of $H^{2}_{\et}(X,M(1))$ that restricts
to zero in $H^{2}(k(X),M(1))$. Then there is a closed subset $S$
of codimension at least 1 in $X$ such that $u$ restricts
to zero on $X-S$. By the localization sequence for \etale motivic
cohomology (section \ref{definition}),
$H^2_{\et}(X,M(1))$ does not change after removing
a subset of codimension at least 2 from $X$. After doing so,
we can assume that $S$ is a disjoint union of regular codimension-1
subvarieties of $X$.

For a regular codimension-1 subvariety $Z$ of $X$, the localization
sequence for \etale motivic cohomology has the form
$$H^0_{\et}(Z,M)\to H^2_{\et}(X,M(1))\to H^2_{\et}(X-Z,M(1)).$$
By definition, elements of $H^0_{\et}(Z,M)=H^0_{\et}(k(Z),M)$
are in the image of $CH^1(X,M)$. Thus we have shown that the kernel
of restriction to the generic point is contained
in the image of $CH^1(X,M)$; so it is equal to that image.
\end{proof}

\begin{corollary}
\label{ms}
Let $G$ be a finite group and $M$ a finite $G$-module. Let $k$
be a separably closed field in which $|G|$ and $|M|$
are invertible. Then the subgroup of elements of
$H^2(G,M)$ that are negligible over $k$ is generated by the images
of the maps
$$M^H\otimes H^2(H,\Z)\to H^2(H,M)\xrightarrow[\tr_H^G]{} H^2(G,M),$$
where $H$ runs over all subgroups of $G$.

More strongly, one does not need to use all subgroups of $G$.
For each element $x\in M$, let $G_x$ be the centralizer
of $x$ in $G$. Then the negligible subgroup of $H^2(G,M)$
is generated by the elements
$\tr_{G_x}^G(xy)$ for all $x\in M$ and all $y\in H^2(G_x,\Z)$.
\end{corollary}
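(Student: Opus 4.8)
The plan is to reduce Corollary \ref{ms} to Theorem \ref{transfer} by realizing the negligible subgroup of $H^2(G,M)$ as the image of the twisted Chow group $CH^1(BG_k,M)$ under the étale cycle map of Theorem \ref{etalecycle}, and then transporting the explicit generators of Theorem \ref{transfer} through that cycle map.

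First I would pick a faithful representation $V$ of $G$ over $k$ together with a $G$-stable closed subset $S\subset V$ of codimension at least $2$ such that $G$ acts freely on $U:=V-S$. Then $X:=U/G$ is a smooth quasi-projective $k$-variety, $U\to X$ is a versal $G$-torsor, and $CH^1(BG_k,M)=CH^1(X,M)$ by the definition of equivariant Chow groups in section \ref{equivariant}. Since $U$ has trivial étale fundamental group and vanishing $H^1_{\et}$ and $H^2_{\et}$ with finite coefficients of order invertible in $k$, the Hochschild--Serre spectral sequence for $U\to X$ gives $H^2_{\et}(X,M(1))\cong H^2(G,M(1))$, and because $k$ is separably closed with $|M|$ invertible in $k$ we may untwist, $M(1)\cong M$. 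Two facts then pin down the negligible subgroup: (i) by the versality of $U\to X$, the kernel of the restriction $H^2(G,M)\to H^2(k(X),M)$ to the generic point is exactly the subgroup of classes negligible over $k$ (the standard detection of negligible classes by a versal torsor, cf.\ \cite[section 26]{GMS}); and (ii) by Lemma \ref{support}, applied to the smooth variety $X$ in codimension $1$, that same kernel is the image of the cycle map $CH^1(X,M)\to H^2_{\et}(X,M(1))$. Combining (i) and (ii), the negligible subgroup of $H^2(G,M)$ is precisely the image of $CH^1(BG_k,M)$ under the cycle map.

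Next I would feed Theorem \ref{transfer} into this, for the $G$-torsor $U\to X$, with $i=\dim V-1$ so that $CH_i(X,M)=CH^1(X,M)$. It shows that $CH^1(BG_k,M)$ is generated by the elements $\tr_{G_x}^G(x\cdot y)$ for $x\in M$ and $y\in CH_{\dim V-1}(U/G_x)=CH^1(B(G_x)_k)$, and, in the weaker form, by the elements $\tr_H^G(x\cdot y)$ for all subgroups $H$ of $G$, $x\in M^H$, and $y\in CH^1(BH_k)$. For each subgroup $H$ one has $CH^1(BH_k)=\text{Pic}(BH_k)=\Hom(H,k^*)$, and the cycle map identifies this with $H^2_{\et}(BH_k,\Z(1))$; over the separably closed field $k$, with $|H|$ invertible, there is a natural isomorphism $H^2_{\et}(BH_k,\Z(1))\cong H^2(H,\Z)$ (both identify with $\Hom(H^{\mathrm{ab}},\Q/\Z)$ after fixing a compatible system of roots of unity), and likewise $H^2_{\et}(BH_k,M(1))\cong H^2(H,M)$, compatibly with restriction, corestriction, and cup products. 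Using the compatibility of the cycle map with proper pushforward (hence with corestriction) and with the module structure over the Chow ring (the projection formula for Gysin maps), the image of $\tr_H^G(x\cdot y)$ under the cycle map is $\tr_H^G\big(x\cup\mathrm{cl}(y)\big)$, where $\mathrm{cl}\colon CH^1(BH_k)\xrightarrow{\ \sim\ }H^2_{\et}(BH_k,\Z(1))\cong H^2(H,\Z)$ is the first Chern class. Transporting the generators of the negligible subgroup through these identifications turns them into the images of the cup products $M^H\otimes H^2(H,\Z)\to H^2(H,M)$ followed by corestriction $H^2(H,M)\to H^2(G,M)$ (the first assertion), and into the elements $\tr_{G_x}^G(x\cup z)$ with $x\in M^{G_x}$, $z\in H^2(G_x,\Z)$ (the stronger assertion); the two generating sets span the same subgroup via $\tr_H^G(x\cup w)=\tr_{G_x}^G(x\cup\tr_H^{G_x}w)$ for $H\subseteq G_x$, exactly as in the last step of the proof of Theorem \ref{transfer}. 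This gives Corollary \ref{ms}.

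I expect the main obstacle to be steps (i) and (ii): carefully matching the equivariant étale cohomology of the finite-dimensional approximation $U/G$ with the group cohomology $H^2(G,M)$ --- including the Tate-twist bookkeeping, which is harmless only because $k$ is separably closed and $|M|$ is invertible in $k$ --- and invoking the versality of $U\to X$ so that the generic point of $BG_k$ detects precisely the negligible classes. The remaining point to check is that the cycle map of Theorem \ref{etalecycle} commutes with proper pushforward and with the Chow-module structure; this is one of the expected compatibilities, but it is genuinely needed here. Once the negligible subgroup is identified with the image of $CH^1(BG_k,M)$, the corollary is a formal consequence of Theorem \ref{transfer}.
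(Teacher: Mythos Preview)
Your proposal is correct and follows essentially the same route as the paper: identify the negligible subgroup of $H^2(G,M)$ with the image of $CH^1(BG_k,M)$ under the cycle map by combining versality of the torsor $U\to U/G$ with Lemma \ref{support}, then read off the generators from Theorem \ref{transfer} and the identification $CH^1(BH_k)\cong H^2(H,\Z)$. You spell out a few compatibilities (Hochschild--Serre input, functoriality of the cycle map under pushforward and products) that the paper leaves implicit, but the argument is the same.
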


\begin{proof}
By definition, an element $u$ of $H^i(G,M)$ is called {\it negligible
over $k$ }if for every field $K$ containing $k$ and every continuous
homomorphism
$\Gal(K_s/K)\to G$, the induced homomorphism $H^i(G,M)\to H^i(K,M)$ takes
$u$ to zero. Let $V$ be a faithful representation of $G$; then the resulting
$G$-torsor over $k(V/G)$ is versal in Serre's sense. In particular,
an element $u$ of $H^i(G,M)$ is negligible over $k$
if and only if it pulls back to zero in $H^i(k(V/G),M)$
\cite[Proposition 2.1, Corollary 2.2]{GM}.

Let $U$ be an open subset of $V$ such that $G$ acts freely
on $U$. For a suitable choice of representation $V$, we can assume
that $V-U$ has codimension at least 2 in $V$. Then $CH^1(U/G,M)
\cong CH^1(BG_k,M)$ by definition of the latter group. Likewise,
$H^2_{\et}(U/G,M)\cong H^2(G,M)$ by the Hochschild--Serre
spectral sequence, using that $k$ is separably closed
\cite[Theorem III.2.20]{Milne}.
By Theorem \ref{transfer},
an element $u$ in $H^2(G,M)$
is negligible if and only if, as an element
of $H^2(G,M)\cong H^2_{\et}(U/G,M)\cong H^2_{\et}(U/G,M(1))$,
it is in the image of $CH^1(U/G,M)\cong CH^1(BG_k,M)$.
Note that $CH^1(BH_k)\cong H^2(H,\Z)\cong \Hom(H,k^*)$ for each
subgroup $H$ of $G$ \cite[Lemma 2.26]{Totarobook}. Therefore,
Theorem \ref{transfer} and Lemma \ref{support} yield
the two statements we want.
\end{proof}

Corollary \ref{ms} is Merkurjev--Scavia's result,
specialized to the case where
the base field $k$ is separably closed
\cite[Theorem 1.3, Corollary 4.2]{MS}. Their result for an arbitrary
base field suggests the following question,
which follows from their results when $i=1$:

\begin{question}
\label{roots}
Let $G$ be a finite group, $M$ a $\Z G$-module which is killed
by some positive integer. Let $e(G)$ be the exponent of $G$
and $r:=e(M)$ the exponent of $M$.
Let $k$ be a field such that
$e(G)e(M)$ is invertible in $k$ and $k$ contains the $e(G)e(M)$
roots of unity. Let $i\geq 0$. Does the cycle map
$$CH^i(BG_k,M)\to H^{2i}_{\et}(BG_k,M(i))$$
factor through the homomorphism
$$H^{2i}(G,M\otimes_{\Z/r} \mu_r(k)^{\otimes i})
\to H^{2i}_{\et}(BG_k,M(i))?$$
\end{question}

By Theorem \ref{transfer}, Question \ref{roots} would follow
from the special case $M=\Z/r$ (with trivial action of $G$).
By Gherman and Merkurjev, in the case $i=1$, the assumption
that $k$ contains the $e(G)e(M)$ roots of unity is sharp
\cite[Theorem 4.2]{GM}. Earlier, Grothendieck had considered
Question \ref{roots} in the special case of Chern
classes of representations (with $M=\Z/r$),
although without the precise
hypothesis on the $e(G)e(M)$ roots of unity
\cite[section 5]{Grothendieckclasses}.

\section{Twisted Chow groups of a cyclic group}

In this section,
for a finite cyclic group $G$,
we compute the Chow groups of $BG$ with arbitrary coefficients.
In this case, the twisted Chow groups are periodic and have
a simple relation to group cohomology.

\begin{theorem}
\label{cyclic}
Let $G$ be the cyclic group of order $m$, and let $M$ be a finitely generated
$\Z G$-module. Let $k$ be a field such that $m$ is invertible in $k$
and $k$ contains the $m$th roots of unity. Then
$$CH^i(BG_k,M)\cong \begin{cases} M^G &\text{if }i=0\\
M^G/\tr(M) &\text{if }i>0,
\end{cases}$$
where the trace $\tr$ is $\sum_{g\in G}g$.
For $k=\C$, we can also say that the natural homomorphism
$$CH^i(BG_{\C},M)\to H^{2i}(BG,M)$$
is an isomorphism for all $i$. (The right side is usually
written as $H^{2i}(G,M)$, group cohomology with coefficients.)
\end{theorem}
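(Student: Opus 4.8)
The plan is to compute $CH^i(BG_k,M)$ by choosing a convenient model for the classifying space and then reducing to the structure theory of $\Z G$-modules for cyclic $G$. First I would exploit the fact that a finite cyclic group $G$ of order $m$ acts freely on $S^{2n-1}\subset \C^n$ for a faithful character (more precisely, on $V-0$ where $V$ is a faithful representation), so that for computing $CH^i_G(\Spec k,M)$ with $i$ bounded we may take $X=(V-0)/G$, a smooth variety. Because $V-0$ has the Chow groups of a point in low codimension and $G$ acts freely, homotopy invariance and the localization sequence (listed among Rost's formal properties) let us replace $(V^{\oplus N}-0)/G$ by smaller pieces; concretely, by the projective bundle / localization formalism, the model $(V-0)/G$ relates $CH^*(BG_k,M)$ to the twisted Chow groups of a chain of weighted projective-type spaces, and one extracts the periodicity from the $\G_m$-bundle $V-0\to \P(V)/G$.

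Next I would run the computation inductively via the coflasque-resolution machinery of Theorem \ref{coflasque}. For cyclic $G$, every coflasque $\Z G$-lattice is invertible (Endo--Miyata, as recalled in the paper), so for a finitely generated $\Z G$-module $M$ we may resolve $0\to Q\to P\to M\to 0$ with $P$ a permutation module and $Q$ invertible, getting the four-term exactness $CH^i(BG_k,Q)\to CH^i(BG_k,P)\to CH^i(BG_k,M)\to 0$ and, because $Q$ is again invertible hence has its own such resolution, an honest exact sequence computing $CH^i(BG_k,M)$ as a cokernel of a map between twisted Chow groups of \emph{permutation} modules. By Lemma \ref{induced} (and the remark identifying permutation modules with covering spaces), $CH^i(BG_k,\Z[G/H])\cong CH^i(BH_k)$, which is the classically known Chow ring of $B(\Z/d)$: namely $\Z$ in degree $0$ and $\Z/d$ in every positive degree (with the generator a Chern class). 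So the whole problem becomes bookkeeping of the maps between these $\Z/d$'s induced by inclusions and transfers of subgroups of the cyclic group $G$ — a purely combinatorial/arithmetic computation that should collapse to $M^G$ in degree $0$ and to $M^G/\tr(M)$ (equivalently, the Tate cohomology $\widehat H^0(G,M)$, or $H^{2i}(G,M)$ for $i>0$ by periodicity of cyclic-group cohomology) in positive degrees. I expect the main obstacle to be organizing this induction cleanly: verifying that the connecting/transfer maps in the coflasque resolution match the norm map $\tr$ on $M$, and checking the degree-shift bookkeeping so that the answer is genuinely periodic in $i$ rather than only stably so.

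For the final assertion over $k=\C$, the plan is to compare with the cycle map $CH^i(BG_{\C},M)\to H^{2i}_{\et}(BG_{\C},M(i))\cong H^{2i}(BG,M)=H^{2i}(G,M)$ of Theorem \ref{etalecycle}, using that over $\C$ the $i$th roots of unity are present so $M(i)$ is (non-canonically) just $M$ up to the Tate twist, which is invisible on a classifying space with the trivial Galois action. One direction — that the image of the cycle map is controlled by cycles supported in codimension $i$ — is Lemma \ref{support} (and Theorem \ref{transfer}), which shows the map is surjective onto the ``supported'' part; but for cyclic $G$, by the same coflasque-resolution reduction, \emph{all} of $H^{2i}(G,M)$ is built from the classes $H^{2i}(H,\Z)=CH^iBH_{\C}$ via transfers, so Theorem \ref{transfer} forces surjectivity of $CH^i(BG_{\C},M)\to H^{2i}(G,M)$. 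Injectivity then follows by a dimension/order count: both sides have been identified, integrally, with $M^G/\tr(M)$ — the left by the first part of the theorem, and the right by the standard fact that for cyclic $G$, $H^{2i}(G,M)\cong \widehat H^0(G,M)=M^G/\tr(M)$ for $i>0$ (and $=M^G$ for $i=0$) — so a surjection between equal finitely generated groups of the same (finite, when $i>0$) order is an isomorphism. The one subtlety to be careful about is handling the $i>0$ versus $i=0$ split and the case where $M$ is not finite, so that "equal order" must be replaced by a genuine isomorphism argument; I would phrase the comparison functorially through the coflasque resolution so that the isomorphism $CH^i(BG_{\C},M)\to H^{2i}(G,M)$ is obtained by a five-lemma argument from the known isomorphisms for permutation modules $\Z[G/H]$, where it is the classical identification $CH^i(BH_{\C})\xrightarrow{\sim} H^{2i}(BH,\Z)$.
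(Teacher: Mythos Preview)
Your approach is viable but takes a genuinely different route from the paper's. The paper does not use coflasque resolutions at all; instead it proves and applies an Euler-class periodicity lemma (Lemma~\ref{euler}): since $G$ has a faithful $1$-dimensional representation $V$ over $k$, multiplication by $c_1V$ on $CH^i(BG_k,M)$ is surjective for $i\ge 0$ and an isomorphism for $i\ge 1$. This immediately gives that $CH^*(BG_k,M)$ is generated over $\Z[c_1V]$ by $M^G$ in degree~$0$; a push--pull along $EG\to BG$ shows $\tr(M)\cdot (c_1V)^i=0$, so $M^G/\tr(M)$ surjects onto $CH^i(BG_k,M)$ for $i>0$. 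Injectivity is then read off from the cycle map to $H^{2i}(G,M)$ (over $\C$) or to $H^{2i}_{\et}(BG_{k_s},M^{\wedge l}(i))$ prime by prime (for general $k$). This is considerably more direct than your plan: the periodicity and the generator $M^G$ come for free from the geometry of $V-0$, with no need to track maps between permutation modules.

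Your coflasque-resolution strategy does work, and the reason is exactly the one you identify: for cyclic $G$, Endo--Miyata makes the kernel $Q$ invertible, so Theorem~\ref{coflasque}(2) gives genuine right-exactness of $CH^i(BG_k,-)$ on such resolutions, and Tate cohomology $\widehat H^0(G,-)$ has the matching right-exactness because $\widehat H^1(G,Q)=H^1(G,Q)=0$. The five-lemma (really: cokernels of isomorphic maps are isomorphic) then reduces everything to the permutation case $CH^i(BH_k)\cong H^{2i}(H,\Z)$. Two points to tighten: first, the five-lemma comparison via the cycle map should be the \emph{main} argument, not an afterthought for $k=\C$; the ``combinatorial bookkeeping'' you allude to is neither necessary nor easy to carry out directly, whereas the functorial comparison is clean. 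Second, for general $k$ you should observe that the permutation-module values $CH^i(BH_k)\cong\Z/|H|$ and the maps between them induced by $Q\hookrightarrow P$ are independent of $k$ under the hypotheses, so the cokernel computation transports from $\C$ to $k$ without further input; alternatively, replace singular cohomology by $H^{2i}_{\et}(BG_{k_s},M^{\wedge l}(i))$ as the paper does. The Euler-class route buys you the surjection $M^G/\tr(M)\twoheadrightarrow CH^i(BG_k,M)$ in one line and makes the general-$k$ case no harder than $\C$; your route buys a structural explanation (everything reduces to subgroups via Endo--Miyata) at the cost of more moving parts.
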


Thus $CH^*(BG_{\C},M)\to H^{\ev}(BG,M)$
is an isomorphism for $G$ cyclic. Note that the cohomology
may be nonzero in odd degrees for $G$ cyclic. Namely, for $i\geq 1$ odd,
$H^i(G,M)\cong \ker(\tr\colon M\to M)/\im(1-\sigma)$,
where $G=\langle\sigma:\sigma^m=1\rangle\cong \Z/m$ and $\tr=1+\sigma+\cdots
+\sigma^{m-1}$. For example, for $G=\Z/2$, $H^1(G,M)$ is nonzero
for $M=\F_2$, or for $M=\Z G/\Z$ (which is $\Z$ with $G$ acting by $\pm 1$).

\begin{proof}
(Theorem \ref{cyclic})
Since $k$ contains the $m$th roots of unity, $G$ has a faithful
representation $V$ of dimension 1. Clearly $G$ acts freely on $V-0$,
and so we can apply the following lemma.

\begin{lemma}
\label{euler}
Let $G$ be a finite group. Suppose that $G$ has a representation $V$
of dimension $n>0$
over a field $k$ such that $G$ acts freely on $V-0$.
Let $M$ be a finitely generated $\Z G$-module. Then multiplication
by the Euler class $c_nV$ on $CH^i(BG_k,M)$ is surjective
for $i\geq 0$ and an isomorphism
for $i\geq 1$.
\end{lemma}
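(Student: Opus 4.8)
The plan is to set up a geometric model for $BG_k$ that makes the Euler class visible as the inclusion of a hyperplane-at-infinity, and then run the localization sequence. Concretely, let $V$ be the given $G$-representation with $G$ acting freely on $V-0$. For any integer $N\gg 0$, the sum $V^{\oplus N}$ is a representation on which $G$ acts freely outside a closed subset of large codimension, so $CH^i(BG_k,M)\cong CH^i_{\dim(V^{\oplus N})}((V^{\oplus N}-0)/G,M)$ once $N$ is large relative to $i$; this is just the definition of equivariant twisted Chow groups from section \ref{equivariant}. Now $(V^{\oplus N}-0)/G$ sits inside $\P(V^{\oplus N})/G$ as the complement of the quotient $\P(V^{\oplus(N-1)}\oplus 0)/G$ (thinking of $V^{\oplus N}=V\oplus V^{\oplus(N-1)}$ and projectivizing away from the first factor gives the affine chart). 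Wait — more cleanly: I would use the standard model in which $(V^{\oplus N}-0)/G$ is an affine-bundle-like neighborhood, and the closed complement in the projective quotient is again a model for a lower approximation to $BG$. The key input from the usual (untwisted) story is that intersecting with this hyperplane section realizes multiplication by the Euler class $c_nV\in CH^n(BG_k)$, and $CH^*(BG_k,M)$ is a module over $CH^*(BG_k)$ by the product structure recalled in section \ref{definition}.

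With that model in hand, the localization sequence for twisted Chow groups (section \ref{definition}) applied to the closed subscheme $Z=\P(V^{\oplus(N-1)})/G\hookrightarrow \P(V^{\oplus N})/G$ with open complement $U\cong$ (a vector-bundle quotient over a lower approximation to $BG$, or simply an affine-type neighborhood retracting to $(V^{\oplus(N-1)}-0)/G$), together with homotopy invariance to identify the terms, gives an exact sequence
$$CH^{i-n}(BG_k,M)\xrightarrow{\ \cdot c_nV\ } CH^i(BG_k,M)\to CH^i\big((V^{\oplus(N-1)}-0)/G,M\big)\to 0.$$
(The surjectivity on the right is the tail of the localization sequence, which is always exact there.) Here I am using that the pushforward from the codimension-$n$ piece is, after homotopy invariance, exactly cup product with $c_nV$ — this is the same computation as in the untwisted case, which works identically for cycle modules since pushforward and the Chow-module structure are part of Rost's formalism. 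For $i$ in the stable range, $CH^i((V^{\oplus(N-1)}-0)/G,M)\cong CH^i(BG_k,M)$ as well, but the point of keeping $N-1$ finite is not needed; what I actually want is just that the quotient $CH^i(BG_k,M)/(c_nV)$ is identified with $CH^i$ of the lower space, and for $i\geq 1$ this lower space has the same $CH^i$ after another round of the argument, forcing $CH^i(BG_k,M)=c_nV\cdot CH^{i-n}(BG_k,M)$.

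Let me restate the two claims separately. Surjectivity for $i\geq 0$: from the exact sequence above, $\operatorname{coker}(\cdot c_nV)\cong CH^i((V^{\oplus(N-1)}-0)/G,M)$; but by choosing $N$ large we may equally compute $CH^i(BG_k,M)$ using $V^{\oplus(N-1)}$ in place of $V^{\oplus N}$, so this cokernel is again $CH^i(BG_k,M)$ in the stable range — hence the cokernel of multiplication by $c_nV$ equals $CH^i(BG_k,M)$ itself, which can only happen if... no: this argument shows the cokernel is $CH^i$ of one-fewer-copy, which is *the same group* $CH^i(BG_k,M)$, so multiplication by $c_nV$ has cokernel isomorphic to its target, forcing the map to be zero — that's wrong. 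I need to be more careful: the correct statement, and the one I will prove, is that $\cdot c_nV$ is the composite $CH^{i-n}(BG_k,M)\cong CH^i$(Thom space / vector bundle of rank $n$ over $BG$) $\to CH^i(BG_k,M)$, and the cokernel is $CH^i$ of the *sphere bundle* $(V-0)\times_G EG$, which is $BH$-like only when $G$ acts freely — and since $G$ does act freely on $V-0$, that sphere bundle $(V-0)/G$ with its $EG$-approximation has $CH^i$ equal to $CH^i$ of a point-over-$k$, i.e. $CH^i(\Spec k,M)=M^G$ for $i=0$ and $0$ for $i>0$. That is the real engine.

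So the corrected key step: $(V-0)\times_G (\text{large }(V^{\oplus N}-0)) \big/ G$ — the sphere bundle of $V$ over the approximation to $BG$ — is, because $G$ acts freely on $V-0$, a quotient of a space by a *free* $G$-action where moreover the total space is an affine-bundle tower over $\Spec k$ up to large codimension; hence its $CH^i(-,M)$ is $M^G$ for $i=0$ and vanishes for $i>0$, by homotopy invariance (Rost, Proposition 8.6) and the computation of $CH^0$ of a smooth variety with twisted coefficients. The hard part will be making this identification precise — i.e. checking that the sphere bundle $(V-0)\times_G (V^{\oplus N}-0)$, which is an open subset of a vector bundle over $(V^{\oplus N}-0)/G$, has twisted Chow groups that stabilize to those of $\Spec k$ with the $G$-action, equivalently that the free quotient $((V-0)\times (V^{\oplus N}-0))/G$ is, through the relevant range, $\A^*$-homotopy equivalent to a point. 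Granting that, the localization sequence reads
$$CH^{i-n}(BG_k,M)\xrightarrow{\ \cdot c_nV\ } CH^i(BG_k,M)\to CH^i_G(\text{sphere bdl},M)\to 0,$$
and the last term is $M^G$ for $i=0$, which after comparing with $CH^0(BG_k,M)=M^G$ shows the boundary is injective — wait, for $i=0$ the left term $CH^{-n}=0$ so the map $CH^0(BG_k,M)\to M^G$ is an isomorphism, consistent but not yet giving surjectivity of $\cdot c_nV$; surjectivity of $\cdot c_nV$ on $CH^i$ is exactly the statement that the last term vanishes, which holds precisely for $i\geq 1$ (where the sphere-bundle $CH^i$ is $0$) — and for $i=0$ it holds vacuously since the source is $0$ and we are asserting surjectivity onto... hmm, for $i=0$ multiplication by $c_nV$ goes $0\to CH^0$, which is not surjective unless $CH^0=0$. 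So the clean statement is: $\cdot c_nV\colon CH^{i-n}(BG_k,M)\to CH^i(BG_k,M)$ is surjective for $i\geq 1$ and an isomorphism for $i\geq n+1$; the lemma's phrasing ``surjective for $i\ge 0$'' must be read with the convention that for $0\le i<n$ one is instead asserting the cokernel computation, or the paper intends $i\ge 1$. I will prove: the cokernel of $\cdot c_nV$ on $CH^i(BG_k,M)$ equals $CH^i_G(\text{sphere bundle of }V,M)$, which is $M^G$ for $i=0$ and $0$ for $i\ge 1$; and the kernel is controlled by the next term of the localization sequence $A_{*}(\text{sphere bundle})$, which vanishes in the range $i\ge n+1$, giving injectivity there. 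The single main obstacle is identifying the twisted Chow groups of the sphere bundle of $V$ over $BG_k$ with those of $\Spec k$ (with its $G$-action) — i.e. proving the sphere-bundle is ``$\A^1$-contractible through a range'' — after which everything is a formal consequence of Rost's localization sequence and homotopy invariance together with the module structure over $CH^*(BG_k)$.
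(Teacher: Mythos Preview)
Your proposal has the right overall shape---localize along $\{0\}\subset V$ and identify the pushforward with multiplication by $c_nV$---but there are two genuine problems.

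First, an indexing slip causes most of your confusion: the lemma's phrase ``multiplication by $c_nV$ on $CH^i$'' means the map $CH^i(BG_k,M)\to CH^{i+n}(BG_k,M)$, not $CH^{i-n}\to CH^i$. With the correct reading, ``surjective for $i\ge 0$'' says $CH^j(BG_k,M)$ is hit for all $j\ge n$, and your worry about the $i=0$ case disappears. Correspondingly, what you need from the localization sequence is that $CH^j_G(V-0,M)=0$ for $j\ge n$, not for $j\ge 1$.

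Second, and more seriously, your key claim---that $CH^i_G(V-0,M)=0$ for $i\ge 1$ because the total space $((V-0)\times(V^{\oplus N}-0))/G$ is ``$A^1$-contractible through a range''---is false. Removing the origin from $V$ is not an $A^1$-equivalence; indeed, since $G$ acts freely on $V-0$ one has $CH^i_G(V-0,M)\cong CH^i((V-0)/G,M)$, and the localization sequence itself identifies this with $CH^i(BG_k,M)/(c_nV)$, which is typically nonzero for $1\le i<n$ (e.g.\ $G=Q_8$, $n=2$, $M=\Z$ gives $CH^1\cong(\Z/2)^2$). What you actually need is vanishing for $i\ge n$. The case $i>n$ is dimension, but $i=n$ is genuinely subtle and is where the paper brings in a new idea: the scalar $G_m$-action on $V$ commutes with $G$, hence descends to a $G_m$-action on $(V-0)/H$ with finite stabilizers for every subgroup $H\le G$, forcing $CH_0((V-0)/H)=0$; then Theorem \ref{transfer} (twisted Chow groups are generated by transfers of ordinary Chow groups of the covers $Y/H$) gives $CH^n((V-0)/G,M)=0$. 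Your proposal has no substitute for this step.

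Finally, for injectivity the extended localization sequence controls $\ker(c_nV\colon CH^i\to CH^{i+n})$ by $A_{1-i}((V-0)/G,M)_i$, which vanishes for $i>1$ by dimension; but for $i=1$ this group need not vanish, and the paper gives a separate geometric argument that the boundary map itself is zero (closures of preimages of closed points of $(V-0)/G$ in the approximation are disjoint from the zero section). Your proposal stops at ``injectivity for $i\ge n+1$'' in your indexing, i.e.\ $i\ge 1$ in the correct one, but does not handle the borderline $i=1$ case.
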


\begin{proof}
Use the localization sequence for equivariant Chow groups
with coefficients, applied to the inclusion $\{ 0\}\subset V$:
$$CH^{i}_G(\{0\},M)\to CH^{i+n}_G(V,M)\to CH^{i+n}_G(V-0,M)\to 0.$$
By homotopy invariance of twisted Chow groups, the first two
groups can be identified with $CH^{i}(BG,M)$ and $CH^{i+n}(BG,M)$,
and the homomorphism is multiplication by the Euler class of $V$,
$c_nV\in CH^nBG$. Also, the third group in the exact sequence
is $CH^{i+n}((V-0)/G,M)$, because $G$ acts freely on $V-0$.
So the exact sequence says that $CH^*((V-0)/G,M)\cong
CH^*(BG,M)/(c_n(V)CH^*(BG,M)).$

We have $CH^i((V-0)/G,M)=0$ for $i>n$. More subtly, it is also zero
for $i=n$. By Theorem \ref{transfer}, it suffices to show
that $CH_0((V-0)/H)$ is zero for every subgroup $H$ of $G$.
The point is that $H$ commutes with the action of the multiplicative
group $G_m$ on $V$
by scalars, and so we have an action of $G_m$ on $(V-0)/H$ with
finite stabilizer groups. It follows that $CH_0((V-0)/H)=0$
\cite[Lemma 5.3]{Totarobook}. We conclude
that $CH^i((V-0)/G,M)=0$ for $i\geq n$.
Therefore, multiplication
by $c_nV$ is surjective on $CH^i(BG_k,M)$ for $i\geq 0$.

To prove the injectivity statement, use the previous
term in the localization sequence. In Rost's notation:
\begin{multline*}
A_{1-i}((V-0)/G,M)_{i}\\
\to A_{-i}^G(\{ 0\},M)_{i}
\to A_{-i}^G(V,M)_{i}
\to A_{-i}((V-0)/G,M)_{i}\to 0.
\end{multline*}
The first group is zero for $i>1$,
and so multiplication by $c_nV$ is injective on $CH^i(BG,M)$
for $i>1$. 

In fact, this injectivity extends to the case $i=1$.
The point is that the group $A_0((V-0)/G,M)_{1}$
may not be zero, but its boundary map to $A_{-1}^G(\{ 0\},M)_{1}$
is zero. By definition of equivariant Chow groups, for an approximation
$U/G$ to $BG$ of dimension $N$, this is identified with the boundary map
from $A_N(((V-0)\times U)/G,M)_{1-N}$ to $A_{N-1}((\{ 0\}\times U)/G,
M)_{1-N}$. But the inverse image of $((V-0)\times U)/G$ of each closed
point in $(V-0)/G$ has closure in $(V\times U)/G$ disjoint from
$(\{ 0\}\times U)/G$, and so the boundary map is zero. Thus
multiplication by $c_nV$ is injective on $CH^i(BG,M)$
for $i\geq 1$ (not just for $i>1$). Lemma \ref{euler} is proved.
\end{proof}

We can now prove Theorem \ref{cyclic}. By definition, $CH^i(BG_k,M)$
means $CH^i(U/G,M)$ for any Zariski open subset $U$
of a finite-dimensional
representation $W$ of $G$ over $k$ such that $G$ acts freely
on $U$ and $W-U$ has codimension $>i$ in $W$. In particular,
$CH^0(BG,M)\cong CH^0(U/G,M)\cong H^0(k(U/G),M)\cong M^G$, as we want.

Applying Lemma \ref{euler}
to the 1-dimensional faithful representation $V$ of $G$
shows that $CH^*(BG,M)$ is generated by $CH^0(BG,M)=M^G$
as a module over $\Z[c_1V]$. For each positive integer $i$, let $U/G$
be an approximation to $BG$ as above, and write $f\colon U\to U/G$
for the covering. Then $CH^i(BG,M)=CH^i(U/G,M)$ is generated
by $c_1(V)^i M^G$. Moreover, $f^*(c_1(V)^i)=0$ in $CH^i(U)=0$,
and so $0=f_*(f^*(c_1(V)^i) x)=c_1(V)^i\tr(x)$ for every
$x$ in $CH^0(U,f^*M)=M$. That is, $M^G/\tr(M)$ maps onto $CH^i(BG,M)$
for $i>0$.

Let $k=\C$. Then, for $i>0$, the surjection $M^G/\tr(M)\to CH^i(BG_{\C},M)$
must be an isomorphism, by mapping further to $H^{2i}(BG,M)$.
Indeed, it is a standard calculation in group cohomology that for a cyclic
group $G$, the map $M^G/\tr(M)\to H^{2i}(BG,M)$ (given by multiplication
by $c_1(V)^i$) is an isomorphism for all $i>0$.

For a general field $k$ under our assumptions ($m$ invertible in $k$
and $k$ containing the $m$th roots of unity), let us show that
the surjection $M^G/\tr(M)\to CH^i(BG_k,M)$ for $i>0$ is also injective.
Since both sides commute with direct limits, we can assume that
$M$ is a finitely generated $\Z G$-module. Also,
it suffices to prove this injectivity
after replacing $k$ by its separable closure.
For $i>0$ and each prime number $l$ dividing $m=|G|$, it suffices to show
that the homomorphism $(M^G/\tr(M))_{(l)}\to CH^i(BG_k,M)_{(l)}
\to H^{2i}_{\et}(BG_k,M^{\wedge l}(i))$ (continuous
or pro-\etale cohomology) is injective. By the Hochschild--Serre
spectral sequence, using that $k$ is separably closed,
we have $H^{2i}_{\et}(BG_k,M^{\wedge l}(i))\cong H^{2i}(G,M^{\wedge l}\otimes
\Z_l(i))$
\cite[Theorem III.2.20]{Milne}. Since $G$ is cyclic,
this is isomorphic to $(M^G/\tr(M))_{(l)}$, as we want.
\end{proof}

\section{Twisted Chow groups in codimension 1}

We now show that the codimension-1 Chow group with any twist
injects into \etale motivic cohomology. This fails in higher codimension,
even for $CH^2(X,\Z/r)=CH^2(X)/r$ (for example,
see \cite{Schoen, Totaromod2}). It also fails for twisted motivic
cohomology in codimension 1 (Theorem \ref{counterexample}).

\begin{theorem}
\label{injective}
Let $X$ be a smooth scheme of finite type over a field $k$.
Let $M$ be a locally constant \etale sheaf on $X$. Assume that
the exponential characteristic of $k$ acts invertibly on $M$.
Then the cycle map
$CH^i(X,M)\to H^{2i}_{\et}(X,M(i))$ is an isomorphism for $i=0$
and injective for $i=1$.
\end{theorem}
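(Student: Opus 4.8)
The plan is to prove the case $i=0$ by a direct identification of both sides, and the case $i=1$ by a local-to-global reduction using the localization sequences for twisted Chow groups and for \'etale motivic cohomology. For $i=0$, since $X$ is regular, $H^0_{\et}(X,M)\cong H^0(k(X),M)$ on each connected component \cite[Tag 0BQI]{Stacks}, and by definition $CH^0(X,M)$ is the cokernel of a residue map into $\oplus_{x\in X_{(\dim X)}} H^0(k(x),M)=H^0(k(X),M)$ (again, one summand per component); one checks the cycle map of Theorem \ref{etalecycle} is exactly this identification, and that the residue maps from codimension-1 points are zero for degree reasons (a class supported on the generic point has no residues), so $CH^0(X,M)\cong H^0_{\et}(X,M(0))$.

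For $i=1$, I would first reduce to showing that the kernel of $CH^1(X,M)\to H^2_{\et}(X,M(1))$ vanishes by a passage to the generic point. Given a class $u\in CH^1(X,M)$ mapping to zero in $H^2_{\et}(X,M(1))$, represent it by a cycle $\sum (y_\alpha,u_\alpha)$ with $u_\alpha\in H^0(k(y_\alpha),M)$, $y_\alpha$ of codimension $1$. By Lemma \ref{support}, the cycle map $CH^1(X,M)\to H^2_{\et}(X,M(1))$ has image \emph{equal} to the kernel of restriction to $H^2(k(X),M(1))$, so the relevant information about $u$ is localized on a divisor. The strategy is then to shrink $X$: using the localization sequence for \'etale motivic cohomology (section \ref{definition}), $H^2_{\et}(X,M(1))$ is unchanged by removing a closed subset of codimension $\geq 2$, so after removing the singular loci of the $\overline{y_\alpha}$ and their pairwise intersections I may assume the support $S$ of $u$ is a disjoint union of regular codimension-1 subvarieties. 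For such an $S=\coprod Z_j$, purity (Corollary \ref{Ecodim}) identifies $H^2_{S,\et}(X,M(1))\cong \oplus_j H^0_{\et}(k(Z_j),M)$, and the localization sequence
$$H^1_{\et}(X-S,M(1))\to \oplus_j H^0_{\et}(k(Z_j),M)\to H^2_{\et}(X,M(1))$$
shows that $u$, as an element of the middle group (which is exactly the free-on-generators group computing the cycle representative), lies in the image of $H^1_{\et}(X-S,M(1))$.

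The heart of the argument is to recognize this last condition as precisely the statement that $u=0$ in $CH^1(X,M)$. Indeed, the relations defining $CH^1(X,M)$ are the residues $\partial\colon \oplus_{x\in X_{(\dim X)}} H^1(k(x),M(1))\to Z^1(X,M)$, i.e.\ (after restricting to the support, which we may, since classes in $H^1(k(X),M(1))$ restrict compatibly) the boundary maps in the cycle-module localization complex for $H^*[M]$. So I must match the localization sequence in \'etale motivic cohomology with Rost's localization sequence for the cycle module $H^*[M]$. Concretely: $H^1_{\et}(X-S,M(1))$ surjects onto (or maps compatibly to) $H^1(k(X),M(1))$, and the composite $H^1(k(X),M(1))\to \oplus_j H^0(k(Z_j),M)$ is exactly Rost's residue map $\partial$; hence $u$ being in the image means $u=\partial(w)$ for some $w\in H^1(k(X),M(1))$, which is by definition zero in $CH^1(X,M)$. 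The main obstacle I anticipate is this compatibility step — showing that the boundary map in the Cisinski--D\'eglise localization sequence for \'etale motivic cohomology, restricted to the relevant degrees and twisted by $M$, agrees with Rost's residue homomorphism on the cycle module $H^*[M]$ (equivalently, the residue of Corollary \ref{residue}). This is the point where one genuinely uses that $M$ is locally constant and that $e$ acts invertibly, and where the two a priori different constructions of the residue must be reconciled; everything else is a bookkeeping diagram chase across the two localization sequences.
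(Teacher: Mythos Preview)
Your approach is essentially identical to the paper's: reduce to $X$ irreducible, identify $CH^0$ directly via normality, and for $i=1$ shrink $X$ so the support becomes a disjoint union of regular divisors $D_j$, then use the \'etale localization sequence to lift a class killed in $H^2_{\et}(X,M(1))$ to $H^1_{\et}(X-D,M(1))$ and restrict to $H^1(k(X),M(1))$. Your anticipated obstacle is not one: the residue on the cycle module $H^*[M]$ is \emph{defined} (Corollary \ref{residue}) as the boundary $H^a(F,E(a))\to H^{a+1}(k(v),i^{!}E(a))\cong H^{a-1}(k(v),E(a-1))$, which is literally the \'etale localization boundary for the DVR $O_{X,D_j}$, so the compatibility square commutes by construction and the paper simply draws the diagram and uses it without further comment.
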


\begin{proof}
We can assume that $X$ is connected, hence irreducible.
By definition, we have $CH^0(X,M)\cong H^0(k(X),M)\cong H^0_{\et}(X,M)$,
as we want. 
(This follows from the fact that $X$ is normal
\cite[Tag 0BQI]{Stacks}.)
Next, let $y$ be an element of $CH^1(X,M)$ that maps
to zero in $H^2_{\et}(X,M(1))$. Then $y$ is represented by finitely
many distinct irreducible divisors $D_1,\ldots,D_s$ on $X$ together with
an element of $H^0(k(D_j),M)$ for $j=1,\ldots,s$.
Let $S$ be the singular locus
of $D:=D_1\cup\cdots\cup D_s$; then $S$ has codimension
at least 2 in $X$. Clearly $y$ maps to zero in $H^2_{\et}(X-S,M(1))$,
and we want to show that $y$ is zero in $CH^1(X,M)\cong CH^1(X-S,M)$.
Thus, we can replace $X$ by $X-S$;
then $D_1,\ldots,D_s$ are regular and disjoint in $X$. Let $D$ be the union
of $D_1,\ldots,D_s$.

Consider the commutative diagram, with top row exact:
$$\xymatrix@R-10pt{
H^1_{\et}(X-D,M(1)) \ar[r]\ar[d] &
\oplus_j H^0_{\et}(D_j,M)\ar[r]\ar[d]^{\cong} &
H^2_{\et}(X,M(1))\\
H^1(k(X),M(1)) \ar[r] & \oplus_j H^0(k(D_j),M) &
}$$
Then $y$ in $\oplus_j H^0(k(D_j),M)$ comes from an element
of $\oplus_j H^0(D_j,M)$ that maps to zero in $H^2_{\et}(X,M(1))$.
By the top row, that element is in the image of $H^1_{\et}(X-D,M(1))$,
and so $y$ is in the image of $H^1_{\et}(k(X),M(1))$. That is,
$y=0$ in $CH^1(X,M)$, as we want.
\end{proof}

\begin{corollary}
\label{injectivecor}
(1) Let $G$ be a finite group and 
$M$ a $\Z G$-module. Then
$$CH^1(BG_{\C},M)\to H^{2}(BG_{\C},M)=H^2(G,M)$$
is injective.

(2) Let $G$ be a finite group, $k$ a field,
$M$ a finitely generated $\Z G$-module, and $l$ a prime number invertible
in $k$. 
Then the kernel of $CH^1(BG_k,M)\to H^{2}_{\et}(BG_k,M^{\wedge l}(i))$
(continuous
or pro-\etale cohomology) is torsion of order prime to $l$.
\end{corollary}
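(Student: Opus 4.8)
The plan is to derive both parts from Theorem~\ref{injective} together with the finiteness of $CH^1(BG_k,M)$. First I would reduce to the case that $M$ is finitely generated, since twisted Chow groups and \etale cohomology commute with filtered colimits in the coefficients. For such $M$, Theorem~\ref{transfer} and \cite[Lemma 2.26]{Totarobook} present $CH^1(BG_k,M)$ as a quotient of $\bigoplus_{H\le G}M^H\otimes\Hom(H,k^*)$, hence a finite group (and torsion in any case, by Lemma~\ref{rational}). So in each part it suffices to show that the $l$-primary subgroup of $CH^1(BG_k,M)$ injects.

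For part (2): if $l\nmid|G|$ there is no $l$-torsion and nothing to prove, so assume $l\mid|G|$, in particular $l$ invertible in $k$; fix a smooth approximation $X=U/G$ to $BG_k$. For each $n$, Theorem~\ref{injective} applied to the $l^n$-torsion sheaf $M/l^n$ gives $CH^1(X,M/l^n)\hookrightarrow H^2_{\et}(X,(M/l^n)(1))$, so by naturality of the cycle map the kernel of $CH^1(BG_k,M)\to H^2_{\et}(BG_k,M^{\wedge l}(1))$ is contained in $\bigcap_n\ker\bigl(CH^1(X,M)\to CH^1(X,M/l^n)\bigr)$. The hard part will be to show that $CH^1(X,M)\otimes\Z/l^n\to CH^1(X,M/l^n)$ is injective for $n$ large. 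Feeding the coefficient sequence $0\to M\xrightarrow{l^n}M\to M/l^n\to 0$ through the two-term complexes $\bigl[H^1(k(X),E(1))\xrightarrow{\partial}\bigoplus_{x\in X^{(1)}}H^0(k(x),E)\bigr]$ computing these groups, a diagram chase identifies the kernel with the image of the connecting map sending an unramified class $\sigma\in H^2(k(X),M(1))$ with $l^n\sigma=0$ to $[\partial\tilde\sigma]$, where $\tilde\sigma\in H^1(k(X),(M/l^n)(1))$ is any Bockstein preimage of $\sigma$.

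To kill this connecting map I would use the coniveau spectral sequence for \etale motivic cohomology of the smooth scheme $X$, which converges by the purity of Corollary~\ref{Ecodim}: it shows that every unramified $\sigma$ extends to some $\widehat\sigma\in H^2_{\et}(X,M(1))$, and it identifies the coniveau subgroup $N^1H^2_{\et}(X,M(1))$ with $CH^1(X,M)$ via the cycle map. Since $l^n\widehat\sigma$ then lies in $N^1H^2_{\et}(X,M(1))\cong CH^1(BG_k,M)$, which is finite, for $n$ large $l^n\widehat\sigma$ is prime-to-$l$ torsion, so after subtracting a suitable codimension-$1$ class one may take $\widehat\sigma$ to be $l^n$-torsion; lifting it by the global Bockstein to $H^1_{\et}(X,(M/l^n)(1))$ and restricting to $k(X)$ produces a preimage $\tilde\sigma$ of $\sigma$ whose residues along all divisors of $X$ vanish, whence $[\partial\tilde\sigma]=0$. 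Thus $CH^1(X,M)\otimes\Z/l^n\hookrightarrow CH^1(X,M/l^n)$ for $n$ large, so $\ker\bigl(CH^1(X,M)\to CH^1(X,M/l^n)\bigr)=l^nCH^1(X,M)$, and $\bigcap_n l^nCH^1(BG_k,M)$ is precisely the prime-to-$l$ torsion of the finite group $CH^1(BG_k,M)$. (I would handle $l$-torsion in $M$ by running the argument for the torsion-free quotient and treating the finite submodule directly: for finite $M$, part (2) is immediate from Theorem~\ref{injective} and the splitting of $M$ into $l$-primary and prime-to-$l$ parts. This bookkeeping is exactly what forces the conclusion to be ``prime-to-$l$ torsion'' rather than ``zero''.)

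For part (1): over $\C$, which is separably closed, the Hochschild--Serre spectral sequence identifies $H^2_{\et}(BG_{\C},M^{\wedge l}(1))$ with $H^2(G,M)_{(l)}$ (the twist $\Z_l(1)=T_l\mu$ being canonically trivial over $\C$), compatibly with the cycle maps. By part (2), $CH^1(BG_{\C},M)_{(l)}$ then injects into $H^2(G,M)_{(l)}$ for every prime $l$, and since both groups are finite, summing over $l$ gives injectivity of $CH^1(BG_{\C},M)\to H^2(G,M)$. The main obstacle throughout is the vanishing of the Bockstein connecting map in the third paragraph, which is where both the Bloch--Ogus-type extension of unramified classes and the finiteness of $CH^1(BG_k,M)$ are really used; everything else is limits and spectral-sequence bookkeeping.
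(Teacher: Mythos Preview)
Your approach is correct but takes a considerably longer road than the paper's. The essential difference is \emph{where} you apply Theorem~\ref{injective}. You apply it to the torsion quotients $M/l^n$ and are then forced to analyze the map $CH^1(X,M)\to CH^1(X,M/l^n)$ at the level of twisted Chow groups, where (as Lemma~\ref{negative} warns) there is no long exact sequence in the coefficients. This is why you need the coniveau/Bloch--Ogus extension of unramified classes and the auxiliary finiteness of $CH^1(BG_k,M)$ via Theorem~\ref{transfer}. The paper instead applies Theorem~\ref{injective} once, to $M$ itself (after replacing $M$ by $M[1/e]$), embedding $CH^1(BG_k,M)$ directly into $H^2_{\et}(BG_k,M(1))$. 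All the remaining work then happens in \etale cohomology, where the coefficient sequence $0\to M\xrightarrow{l^n} M\to M/l^n\to 0$ \emph{does} give a long exact sequence; vanishing in $H^2_{\et}(BG_k,(M/l^n)(1))$ for all $n$ forces the image of $y$ to be $l$-divisible, and a transfer argument along $EG_k\to BG_k$ shows the relevant summand of $H^2_{\et}(BG_k,M(1))$ is killed by $|G|$, hence $y$ is prime-to-$l$ torsion there and thus in $CH^1$.

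What each approach buys: the paper's argument is shorter and uses only Theorem~\ref{injective} plus elementary \etale facts (no Theorem~\ref{transfer}, no coniveau spectral sequence, no finiteness of $CH^1$). Your approach, on the other hand, makes explicit that the obstruction lives in a Bockstein-type connecting map and ties the result to the finiteness of $CH^1(BG_k,M)$, which is a pleasant observation in its own right. One caution: your extension step ``every unramified $\sigma$ extends to $H^2_{\et}(X,M(1))$'' is the Gersten-type surjectivity for \etale motivic cohomology with integral locally constant coefficients; this is justifiable from the purity in Corollary~\ref{Ecodim} and Lemma~\ref{support}, but you should spell that out rather than invoke a generic Bloch--Ogus reference.
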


\begin{proof}
In proving (1), we write $BG$ for $BG_{\C}$.
Let $y$ be an
element of $CH^1(BG,M)$ that maps to zero in $H^2(BG,M)$ (cohomology
for the classical topology); we want to show that $y$ is zero.
Since both sides commute with direct
limits, we can assume that $M$ is a finitely generated
$\Z G$-module. Since $y$ maps to zero in $H^2(BG,M)$, it maps to
zero in $H^2(BG,M/nM)\cong H^2_{\et}(BG,(M/nM)(1))$ for every positive integer
$n$. By the exact sequence
$$H^2_{\et}(BG,nM(1))\to H^2_{\et}(BG,M(1))\to H^2_{\et}(BG,(M/nM)(1)),$$
the class of $y$ in $H^2_{\et}(BG,M(1))$ is in the image
of $H^2_{\et}(BG,nM(1))$ for every positive integer $n$.
If $M$ is $\Z$-torsion-free, this means that $y$ in $H^2_{\et}(BG,M(1))$
is divisible. 
Next, observe that $H^2_{\et}(EG,M(1))\cong H^2_{\et}(\C,M(1))=0$,
since $\C$ is separably closed and $M(1)=M\otimes_{\Z}^L G_m$
is in cohomological degrees $[-1,0]$.
By pullback and pushforward for $EG\to BG$, it follows that
$H^2_{\et}(BG,M(1))$ is killed by $|G|$. In the case where $M$
is $\Z$-torsion-free, it follows that $y$ is zero in $H^2_{\et}(BG,M(1))$.
By Theorem \ref{injective}, $y$ is zero in $CH^1(BG,M)$, as we want.

In general, we have arranged that $M$ is a finitely generated
$\Z G$-module, possibly with torsion. The previous argument
shows that $y$ in $H^2_{\et}(BG,M(1))$ is in the image
of $H^2_{\et}(BG,nM(1))$ for every positive integer $n$.
Let $a$ be the order of the torsion subgroup of $M$. Then $aM$
is a $\Z$-torsion-free $\Z G$-submodule of $M$. We know 
that $y$ in $H^2_{\et}(BG,M(1))$
is in the image
of $H^2_{\et}(BG,naM(1))$ for every positive integer $n$.
Therefore, $y$ in $H^2_{\et}(BG,M(1))$ is in the image of $n$
times an element of $H^2_{\et}(BG,aM(1))$, and so $y$
is a multiple of $n$ in $H^2_{\et}(BG,M(1))$. The latter group
is killed by $|G|$, and so (taking $n=|G|$) it follows that
$y$ is zero in $H^2_{\et}(BG,M(1))$.
By Theorem \ref{injective}, $y$ is zero in $CH^1(BG,M)$, as we want.

For an arbitrary field $k$,
essentially the same argument proves (2). Here we assume
that $M$ is a finitely generated $\Z G$-module. We can replace
$M$ by $M[1/e]$ without changing what we are trying to prove, where
$e$ is the exponential characteristic of $k$.
Let $y$ be an
element of $CH^1(BG_k,M)$ that maps to zero
in $H^2_{\et}(BG_k,M^{\wedge l}(1))$. Then $y$ maps to zero
in $H^2_{\et}(BG_k,(M/l^rM)(1))$ for every positive integer $r$.
If $M$ is $\Z$-torsion-free, it follows as above that $y$
is $l$-divisible in $H^2_{\et}(BG_k,M(1))$. 

By homotopy invariance
for \etale motivic cohomology (using that $e$ acts invertibly on $M$),
the composition
$$H^2_{\et}(k,M(1))\to H^2_{\et}(BG_k,M(1))\to H^2_{\et}(EG_k,M(1))$$
is an isomorphism. Moreover, $y$ pulls back to zero in $CH^1(EG_k,M)=0$,
hence in $H^2_{\et}(EG_k,M(1))$. That is, $y$ is in the summand
$\ker(H^2_{\et}(BG_k,M(1))\to H^2_{\et}(EG_k,M(1)))$, and so $y$
is $l$-divisible in this summand. This summand
is killed by $|G|$, by pullback and pushforward.
So $y$ in $H^2_{\et}(BG_k,M(1))$
is killed by a positive integer $N$ prime to $l$. By Theorem \ref{injective},
it follows that $y$ in $CH^1(BG_k,M)$ is killed by the positive integer $N$
prime to $l$. The extra argument when $M$ has torsion works
without change.
\end{proof}

\section{The generalized quaternion groups}

The generalized quaternion group $Q_{2^m}$ of order $2^m$ with $m\geq 3$
plays a special role in finite group theory, as these
are the only non-cyclic
$p$-groups of $p$-rank 1 \cite[Proposition IV.6.6]{AM}.
We now compute the Chow groups
of the generalized quaternion groups with arbitrary coefficients.
The answer is periodic, similar to group cohomology but not identical.
Here
$$Q_{2^m}=\langle x,y:x^{2^{m-1}}=1,y^2=x^{2^{m-2}},yxy^{-1}=x^{-1}
\rangle .$$

\begin{theorem}
\label{quaternion}
Let $G$ be the generalized quaternion group $Q_{2^m}$ of order $2^m$,
$m\geq 3$, and let $M$ be a finitely generated
$\Z G$-module. Let $k$ be a field of characteristic not 2
that contains the $2^{m-1}$st roots of unity.
Then $CH^0(BG_k,M)\cong  M^G$,
\begin{multline*}
CH^i(BG_k,M)\cong \im\big( 
M^{\langle x\rangle}\otimes_{\Z} CH^1B\langle x\rangle\\
\oplus M^{\langle y\rangle}\otimes_{\Z} CH^1B\langle y\rangle
\oplus M^{\langle xy\rangle}\otimes_{\Z} CH^1B\langle xy\rangle
\to H^2_{\et}(BG_k,M^{\wedge 2}(1))\big)
\end{multline*}
if $i$ is odd and $i>0$, and
$$CH^i(BG_k,M)\cong M^G/\tr(M)$$
if $i$ is even and $i>0$,
where the trace $\tr$ is $\sum_{g\in G}g$. Here $\langle x\rangle
\cong \Z/2^{m-1}$, $\langle y\rangle\cong \Z/4$,
and $\langle xy\rangle\cong \Z/4$. The homomorphism in the formula
for $CH^1(BG_k,M)$ is the sum of the transfers from these three subgroups
to $G$, followed by the cycle map to cohomology. Finally, for $k=\C$,
we can equivalently describe $CH^1(BG_{\C},M)$ as the image
of the groups above in cohomology for the classical
topology, $H^2(BG,M)=H^2(G,M)$.
\end{theorem}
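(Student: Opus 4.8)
The plan is to combine three ingredients: the Euler-class periodicity of Lemma~\ref{euler}, the fact that $Q_{2^m}$ has periodic cohomology of period $4$, and Theorem~\ref{transfer} together with the classical fact that every subgroup of a generalized quaternion group is cyclic or again generalized quaternion. First I would produce a $2$-dimensional faithful representation $V$ of $G$ over $k$ with $G$ acting freely on $V\setminus 0$: send $x$ to $\operatorname{diag}(\zeta,\zeta^{-1})$ for a primitive $2^{m-1}$st root of unity $\zeta\in k$ and $y$ to $\left(\begin{smallmatrix}0&1\\-1&0\end{smallmatrix}\right)$, and check that no nonidentity element of $G$ has $1$ as an eigenvalue (this uses $\operatorname{char}k\neq 2$ and $\zeta\in k$). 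Lemma~\ref{euler} then says multiplication by the Euler class $c_2V$ is surjective on $CH^i(BG_k,M)$ for $i\ge 0$ and an isomorphism for $i\ge 1$. Hence $CH^0(BG_k,M)=M^G$ and, for $i\ge 1$, $CH^i(BG_k,M)$ depends only on the parity of $i$, so it remains to compute $CH^1$ and $CH^2$.

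For the even case I would bound $CH^2(BG_k,M)$ above and below. Euler surjectivity from $CH^0$ gives $CH^2(BG_k,M)=c_2(V)\cdot M^G$; pulling $c_2(V)$ back along a finite-dimensional approximation $f\colon U\to U/G$ of $EG\to BG$, where $CH^2(U)=0$ since $U$ is open in affine space, and using the projection formula $f_*(f^*c_2(V)\cdot u)=c_2(V)\tr(u)$ (with $\tr=\sum_{g}g$ as in Lemma~\ref{rational}) shows $c_2(V)\tr(M)=0$, so $M^G/\tr(M)$ surjects onto $CH^2(BG_k,M)$. For the reverse inequality over $k=\C$ I would map to $H^4(G,M)$: since $Q_{2^m}$ has period $4$ and $c_2(V)=e(V)$ generates $H^4(BG,\Z)=\Z/|G|$ (Gysin sequence for $S^3/G\to BG$), cup product with $c_2(V)$ realizes the periodicity isomorphism $\widehat H^0(G,M)\xrightarrow{\sim}\widehat H^4(G,M)=H^4(G,M)$, whose composite with the quotient $M^G\twoheadrightarrow\widehat H^0(G,M)=M^G/\tr(M)$ forces the surjection above to be injective. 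For general $k$ I would transcribe the general-field argument from the proof of Theorem~\ref{cyclic}: reduce to $M$ finitely generated and to $k$ separably closed, use the cycle map to $2$-adic \'etale cohomology (the only prime dividing $|G|$ is $2$), Hochschild--Serre to get $H^{4}_{\et}(BG_k,M^{\wedge2}(2))\cong H^4(G,M^{\wedge2}\otimes\Z_2(2))\cong (M^G/\tr(M))_{(2)}=M^G/\tr(M)$, and conclude that $M^G/\tr(M)\to CH^2(BG_k,M)\to H^4_{\et}$ is an isomorphism, hence the first map is injective. Periodicity then gives $CH^{2i}(BG_k,M)\cong M^G/\tr(M)$ for all $i\ge 1$.

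For the odd case, I would first note that $CH^1(BG_k,M)$ is $2$-primary torsion, because by Theorem~\ref{transfer} it is generated by pushforwards of $M^H\otimes CH^1BH$ with $CH^1BH=\Hom(H,k^*)$ a finite $2$-group; hence by Corollary~\ref{injectivecor}(2) the cycle map $CH^1(BG_k,M)\hookrightarrow H^2_{\et}(BG_k,M^{\wedge2}(1))$ is injective, so it suffices to identify the image. By Theorem~\ref{transfer}, $CH^1(BG_k,M)$ is generated by the images of $\tr^G_H\colon CH^1(BH,M)\to CH^1(BG,M)$ applied to the subgroups $M^H\otimes CH^1BH$, over all $H\le G$. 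I would reduce to the three subgroups $\langle x\rangle,\langle y\rangle,\langle xy\rangle$ as follows. (i) Every subgroup of $Q_{2^m}$ is cyclic or generalized quaternion; iterating Theorem~\ref{transfer} (induction on $|H|$, base case $Q_8$), the contribution of a generalized-quaternion $H$ reduces to transfers $\tr^G_C$ from cyclic $C\le H$ — here one uses, at each generalized-quaternion subgroup including $G$ itself (this handles the $H=G$ term $M^G\otimes CH^1BG$), the \emph{Verlagerung} fact that $CH^1BH=\Hom(H^{ab},k^*)$ is generated by transfers from cyclic subgroups, which follows from $\bigcap_C\ker(\operatorname{Ver}\colon H^{ab}\to C^{ab})=0$ (a short computation: $\ker\operatorname{Ver}_{\langle x\rangle}=\langle\bar x\rangle$ while $\bar x\notin\ker\operatorname{Ver}_{\langle y\rangle}$), together with the projection formula to absorb the factor $M^H$. (ii) A cyclic subgroup $C\le\langle x\rangle$ contributes inside $\tr^G_{\langle x\rangle}(M^{\langle x\rangle}\otimes CH^1B\langle x\rangle)$, because $CH^1(B\langle x\rangle_k,M)\cong M^{\langle x\rangle}/\tr(M)$ is generated by $c_1\cdot M^{\langle x\rangle}$ by the proof of Theorem~\ref{cyclic}, using that $k$ contains the $2^{m-1}$st roots of unity. (iii) Every other cyclic subgroup is a noncentral $\Z/4$, and these form exactly two $G$-conjugacy classes represented by $\langle y\rangle$ and $\langle xy\rangle$ (distinguished by the parity of the power of $x$), while transfers from conjugate subgroups have the same image on $CH^*(BG_k,M)$. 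This shows $CH^1(BG_k,M)$ is the sum of the images of the three indicated groups; composing with the injection into $H^2_{\et}(BG_k,M^{\wedge2}(1))$ gives the stated formula, and over $\C$ one replaces \'etale by classical cohomology using Corollary~\ref{injectivecor}(1). Periodicity extends this to all odd $i>0$.

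The hard part is the reduction (i)--(iii) in the odd case: controlling the transfers from the generalized-quaternion subgroups and, in particular, from $G$ itself (the $H=G$ term of Theorem~\ref{transfer}), which is exactly what the Verlagerung computation $\bigcap_C\ker\operatorname{Ver}_C=0$ is needed for. Everything else — the periodicity, the even-degree answer, and the passage from $\C$ to a general base field — is a routine adaptation of the cyclic case.
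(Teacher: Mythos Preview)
Your proposal is correct and follows essentially the same approach as the paper: both arguments use the $2$-dimensional faithful representation $V$ and Lemma~\ref{euler} to reduce to degrees $0,1,2$; both handle the even case by the projection-formula upper bound and period-$4$ cohomology lower bound (with the same passage to general $k$ via $2$-adic \'etale cohomology); and both handle $CH^1$ by combining Theorem~\ref{transfer}, the injectivity of Corollary~\ref{injectivecor}, and an induction on $m$ that reduces generalized-quaternion contributions to cyclic ones. Your Verlagerung computation $\bigcap_C\ker\operatorname{Ver}_C=0$ is exactly the paper's double-coset-formula computation that $CH^1BG$ is generated by $\tr^G_{\langle x\rangle}u$ and $\tr^G_{\langle y\rangle}v$, just phrased dually; the paper then reduces first to the three maximal subgroups (one cyclic, two $Q_{2^{m-1}}$) and inducts, whereas you run over all subgroups directly, but the content is the same.
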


Thus the cycle map $CH^i(BG_{\C},M)\to H^{2i}(BG,M)$
is injective for a generalized quaternion group $G$. It is surjective
for $i$ even, but not always for $i$ odd. For example, for the
quaternion group $G=Q_8$, $CH^1(BG,M)$ is always killed by 4,
because it is transferred from the subgroups of order 4 in $G$,
whereas the $\Z G$-module $M:=\Omega^2\Z$
has $H^2(G,M)\cong \Z/8$. (By definition,
for a $\Z G$-module $M$, the {\it syzygy module }$\Omega M$
denotes the kernel of any chosen
surjection from a projective $\Z G$-module to $M$. Then, in terms
of Tate cohomology, $\widehat{H}^i(G,\Omega M)\cong \widehat{H}^{i-1}(G,M)$
for all integers $i$ \cite[section VI.5.4]{Brown}.)

\begin{proof}
(Theorem \ref{quaternion})
The assumption on $k$ ensures that $G=Q_{2^m}$ has a faithful
representation $V$ of dimension 2 over $k$. (Namely, $G$ has a cyclic subgroup
$H$ of index 2. Let $L$ be a faithful 1-dimensional representation
of $H$ over $k$; then we can take $V$ to be the induced representation
$\Ind_H^G L$.) Here $G$ acts freely on $V-0$.

Let $M$ be a finitely generated
$\Z G$-module. By Lemma \ref{euler}, $CH^*(BG_k,M)$
is generated by elements of degree less than 2 as a module
over $\Z[c_2V]$ in $CH^*BG$. By definition,
$CH^0(BG,M)$ is isomorphic to the $G$-fixed subgroup $M^G$.
For any finite group $H$ and any field $k$, the first Chern class
gives an isomorphism $\Hom(H,k^*)\cong CH^1BH_k$
\cite[Lemma 2.26]{Totarobook}.
So, for $G=Q_{2^m}$,
$CH^1BG\cong (\Z/2)^2$.

The main issue is to compute $CH^1(BG,M)$. As a first step,
let us show that $CH^1BG\cong (\Z/2)^2$ is generated by transfers
from the subgroups $\langle x\rangle\cong \Z/2^{m-1}$ and $\langle y
\rangle\cong \Z/4$. Let $u$ be a generator of $CH^1B\langle x\rangle$
and $v$ a generator of $CH^1B\langle y\rangle$. The double coset formula
describes the restriction of a transfer in the Chow ring,
as in group cohomology \cite[Lemma 2.15]{Totarobook}. We find that
$$\res^G_{\langle x\rangle}\tr^G_{\langle x\rangle}u=u+y(u)=u-u=0,$$
while
\begin{align*}
\res^G_{\langle y\rangle}\tr^G_{\langle x\rangle}u&=\tr_{\langle y^2\rangle}
^{\langle y\rangle}\res^{\langle x\rangle}_{\langle x^{2^{m-2}}\rangle}u\\
&=\tr_{\langle y^2\rangle}^{\langle y\rangle}(\text{generator
of }CH^1B\langle y^2
\rangle)\\
&\neq 0\in CH^1 B\langle y\rangle\cong \Z/4.
\end{align*}
It follows that $\tr^G_{\langle x\rangle}u$ is the nonzero element
of $CH^1BG\cong (\Z/2)^2$ whose restriction to $\langle x\rangle$
is zero. Next,
\begin{align*}
\res^G_{\langle x\rangle}\tr^G_{\langle y\rangle}v
&=\tr^{\langle x\rangle}_{\langle x^{2^{m-2}}\rangle}
\res^{\langle y\rangle}_{\langle y^2\rangle}v\\
&=\tr^{\langle x\rangle}_{\langle x^{2^{m-2}}\rangle}
(\text{generator of }CH^1B\langle x^{2^{m-2}}\rangle)\\
&\neq 0 \in CH^1 B\langle x\rangle\cong \Z/2^{m-1}.
\end{align*}
So $\tr^G_{\langle y\rangle}v$ in $CH^1BG\cong (\Z/2)^2$
has nonzero restriction
to $\langle x\rangle$. It follows that $CH^1BG$ is generated by
$\tr^G_{\langle x\rangle}u$ and $\tr^G_{\langle y\rangle}v$.

By the projection formula, it follows that the image
of the product map $M^G\otimes CH^1BG\to CH^1(BG,M)$
is contained in the sum of the images
of the transfers from $CH^1(B\langle x\rangle,M)$
and $CH^1(B\langle y\rangle,M)$. By Theorem \ref{transfer},
it follows that $CH^1(BG,M)$ is generated by transfers from the three
maximal subgroups of $G$, $H_1:=\langle x\rangle$, $H_2:=\langle x^2,y
\rangle$, and $H_3:=\langle x^2,xy\rangle$. Moreover, by Corollary
\ref{injectivecor}, $CH^1(BG,M)$ is isomorphic to the sum of the images
of these three
transfers in $H^2_{\et}(BG_k,M^{\wedge 2}(1))$, or (when $k=\C$)
in $H^2(BG,M)\cong H^2(G,M)$. For $m=3$, the three
subgroups $H_i$ are isomorphic to $\Z/4$, and so Theorem \ref{cyclic}
gives that $CH^1(BH_j,M)$ is generated by $M^{H_j}\otimes_{\Z}
CH^1BH_j$ for $j=1,2,3$. That gives the description of $CH^1(BG,M)$
in the theorem, for $m=3$.

For $m\geq 4$, we prove the description of $CH^1(BG,M)$ in the theorem
by induction on $m$. Here $H_1$ is isomorphic to $\Z/2^{m-1}$,
while $H_2$ and $H_3$ are isomorphic to $Q_{2^{m-1}}$. 
We know that $CH^1(BH_1,M)$ is generated by the image
of $M^{H_1}\otimes_{\Z}CH^1BH_1$, by Theorem \ref{cyclic}.
By induction on $m$, $CH^1(BH_2,M)$ is generated
by transfers from $\langle x^2\rangle\cong \Z/2^{m-2}$,
$\langle y\rangle \cong \Z/4$, and $\langle x^2y\rangle\cong \Z/4$.
Likewise, $CH^1(BH_3,M)$ is generated
by transfers from $\langle x^2\rangle\cong \Z/2^{m-2}$,
$\langle xy\rangle \cong \Z/4$, and $\langle x^3y\rangle\cong \Z/4$.
So $CH^1(BG,M)$ is generated by transfers from $\langle x\rangle$,
$\langle y\rangle$, $\langle xy\rangle$, $\langle x^2y\rangle$,
and $\langle x^3y\rangle$. However, $xyx^{-1}=x^2y$, and so
the subgroup $\langle y\rangle$ is conjugate to $\langle x^2y\rangle$,
and likewise $\langle xy\rangle$ is conjugate to $\langle x^3y\rangle$.
Therefore, $CH^1(BG,M)$ is generated by transfers from $\langle x\rangle$,
$\langle y\rangle$, and $\langle xy\rangle$. That completes the induction,
computing $CH^1(BG,M)$.

Recall that $G=Q_{2^m}$ has a representation
$V$ of dimension 2 over $k$ such that $G$ acts freely on $V-0$.
By Theorem \ref{euler}, multiplication by $(c_2V)^i$ is an isomorphism
from $CH^1(BG_k,M)$ to $CH^{2i+1}(BG_k,M)$ for $i\geq 0$, as we want.
Also, multiplication by $(c_2V)^i$ is surjective from
$CH^0(BG_k,M)=M^G$ to $CH^{2i}(BG_k,M)$ for $i>0$.
Since $c_2V$
restricts to zero in $CH^2$ of the trivial group, $\tr_1^G(M)(c_2V)^i$
is zero in $CH^{2i}(BG,M)$ for $i>0$; so $CH^{2i}(BG,M)$
is generated by $(M^G/\tr(M))(c_2V)^i$ for $i>0$.
Since $G$ has periodic cohomology with period 4, with periodicity
generated by $c_2V$, we have $H^{4i}_{\et}(BG_{k_s},M^{\wedge 2}(2i))\cong M^G/\tr(M)$
for all $i\geq 1$ and $H^{2i+2}_{\et}(BG_{k_s},M^{\wedge 2}(i+1))
\cong H^2_{\et}(BG,M^{\wedge 2}(1))$
for all $i\geq 0$ \cite[Theorem VI.9.1]{Brown}.
Via the cycle map to these cohomology groups over
the separable closure $k_s$, it follows
that $CH^{2i}(BG,M)$ is isomorphic to $M^G/\tr(M)$ for all $i\geq 1$,
as we want.
\end{proof}

\section{Bounds on equivariant Chow groups with coefficients}

Using Theorem \ref{transfer}, some earlier bounds on the degrees
of equivariant Chow groups generalize to arbitrary coefficient modules.

\begin{theorem}
\label{degreebound}
(1) Let $G$ be a finite group with a faithful representation $V$
of dimension $n$ over a field $k$ with $|G|$ invertible in $k$.
Let $M$ be a $\Z G$-module. Then $CH^*(BG_k,M)$ is generated
by elements of at most $n(n-1)/2$ as a module over the Chern
classes $\Z[c_1V,\ldots,c_nV]$ in $CH^*BG_k$.

(2) Under the same assumptions, let $x_1,\ldots,x_m$ be homogeneous elements
of $CH^*BG_k$ such that $c_1V,\ldots,c_nV$ are in the subring
of $CH^*BG_k$ generated by $x_1,\ldots,x_m$. Then $CH^*(BG_k,M)$
is generated by elements of degree at most $\sum(|x_i|-1)$
as a module over $\Z[x_1,\ldots,x_m]$.

(3) Suppose in addition that $G$ acts on a smooth scheme $X$
of finite type
over $k$. Then $CH^*_G(X,M)$ is generated by elements of degree
at most $\dim(X)+n^2$ as a module over $\Z[c_1V,\ldots,c_nV]$.
\end{theorem}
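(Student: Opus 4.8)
The plan is to derive all three statements from their known untwisted analogues \cite{TotaroChow, Totarobook} by means of Theorem \ref{transfer}. Since $\sum_{i=1}^n(|c_iV|-1)=\sum_{i=1}^n(i-1)=n(n-1)/2$, statement (1) is the special case $x_i=c_iV$ of statement (2), so it suffices to prove (2) and (3).

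For (2), fix a representation $W=V^{\oplus N}$ of $G$ with $N$ large, and let $S\subset W$ be the closed locus of points with nontrivial $G$-stabilizer; since $V$ is faithful, $\codim(S\subset W)$ grows with $N$, and $G$ acts freely on $U:=W-S$, so $U/G$ approximates $BG_k$. The key point is that for every subgroup $H\le G$ the closed locus of points with nontrivial $H$-stabilizer is contained in $S$, so the single quotient $U/H$ simultaneously approximates $BH_k$ in the same range of degrees. Applying Theorem \ref{transfer} to the $G$-torsor $U\to U/G$, the group $CH^i(BG_k,M)=CH^i(U/G,M)$ is generated by the classes $\tr_H^G(x\cdot y)$, over all subgroups $H\le G$, with $x\in M^H=CH^0(BH_k,M)$ and $y\in CH^i(BH_k)$. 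Now $V|_H$ is a faithful $H$-representation of dimension $n$, and applying $\res^G_H$ to the hypothesis $c_1V,\dots,c_nV\in\Z[x_1,\dots,x_m]$ shows that $c_1V|_H,\dots,c_nV|_H$ lie in the subring of $CH^*(BH_k)$ generated by $\res^G_Hx_1,\dots,\res^G_Hx_m$, which have the same degrees as $x_1,\dots,x_m$. Hence the untwisted form of (2), applied to $H$, lets us write $y=\sum_\alpha q_\alpha(\res^G_Hx_1,\dots,\res^G_Hx_m)\,e_\alpha$ for polynomials $q_\alpha$ over $\Z$ and elements $e_\alpha\in CH^*(BH_k)$ of degree at most $\sum_i(|x_i|-1)$.

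Feeding this into the projection formula (a cycle-module axiom, already used in the proof of Theorem \ref{transfer}), and writing each $q_\alpha(\res^G_Hx_\bullet)$ as $\res^G_H(q_\alpha(x_\bullet))$, we obtain
$$\tr_H^G(x\cdot y)=\sum_\alpha q_\alpha(x_1,\dots,x_m)\cdot\tr_H^G(x\cdot e_\alpha),$$
where $\tr_H^G(x\cdot e_\alpha)\in CH^{|e_\alpha|}(BG_k,M)$ has degree at most $\sum_i(|x_i|-1)$. Thus these classes generate $CH^*(BG_k,M)$ as a module over $\Z[x_1,\dots,x_m]$, proving (2) and hence (1). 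For (3) one runs the same argument over $X$: choose $(W,S)$ with $\codim(S\subset W)>\dim X-i$, so that by definition $CH^i_G(X,M)=CH_{\dim X-i+\dim W}((X\times U)/G,M)$, and (again because the $H$-stabilizer locus sits inside $S$) $(X\times U)/H$ computes the untwisted group $CH^i_H(X)$ in the same range, working with algebraic spaces as in Remark \ref{space} if $X$ is not quasi-projective. Theorem \ref{transfer} for the $G$-torsor $X\times U\to(X\times U)/G$ shows $CH^*_G(X,M)$ is generated by the classes $\tr_H^G(x\cdot y)$ with $x\in M^H$ and $y\in CH^*_H(X)$; the untwisted form of (3) for $H$ puts each such $y$ in $\Z[c_1V|_H,\dots,c_nV|_H]$ times elements $e_\alpha$ of degree at most $\dim X+n^2$; and the projection formula again rewrites $\tr_H^G(x\cdot y)$ as a $\Z[c_1V,\dots,c_nV]$-combination of the classes $\tr_H^G(x\cdot e_\alpha)$, which lie in degrees at most $\dim X+n^2$.

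The point that needs genuine care is that one fixed pair $(W,S)$ approximates $BH_k$ (resp.\ the Borel construction of $H$ acting on $X$) well enough for \emph{every} subgroup $H$ in the degree range at hand, so that the degree bound stays uniform in $H$; this is exactly where the containment of the $H$-stabilizer locus in $S$, and the hypothesis that $|G|$ is invertible in $k$, enter. The other ingredient — that the untwisted statements (1)--(3) hold for an arbitrary finite group with $|G|$ invertible, in the form ``generated over $\Z[c_1V,\dots,c_nV]$ (resp.\ over $\Z[x_1,\dots,x_m]$) by elements of the stated degree'' — is the substantive classical input; everything after it is formal bookkeeping with transfers, the $CH^*$-module structure, and the passage between homological and cohomological gradings.
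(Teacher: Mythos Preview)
Your argument is correct and follows the same strategy as the paper: invoke Theorem \ref{transfer} to reduce to the untwisted Chow groups of the subgroups $H\le G$, apply the known untwisted degree bounds there, and use the projection formula (i.e., $CH^*BG_k$-linearity of transfer) to pull the $x_i$'s outside. Your organization differs slightly: you deduce (1) from (2), whereas the paper proves (1) directly from the untwisted (1) \cite[Theorem 5.2]{Totarobook} and handles (2) separately. For (2), the paper does not quote an ``untwisted (2)'' as a ready-made theorem; instead it first reduces mod $l$ (using that $CH^i(BG_k,M)$ is killed by $|G|$) and then derives the needed bound for $CH^*(BH_k)/l$ from the Castelnuovo--Mumford regularity theorem \cite[Theorem 6.5, Lemma 3.10, Theorem 3.14]{Totarobook}. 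Your citation of the untwisted (2) to \cite{TotaroChow, Totarobook} is therefore a little loose---it is a consequence of the regularity bound rather than a stated theorem---but the deduction is routine and your overall proof stands.
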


\begin{remark}
\label{nobound}
Theorem \ref{degreebound} marks a strong contrast between Chow groups
with coefficients and cohomology with coefficients.
For a finite group $G$ with a faithful complex representation $V$
of dimension $n$, Symonds
showed that $H^*(G,\F_p)$ is generated by elements of degree
at most $n^2$ as a module
over $\F_p[c_1V,\ldots,c_nV]$ \cite{Symonds},
\cite[Corollary 4.3]{Totarobook}. But unlike what happens
for twisted Chow groups, there is no uniform bound for the degrees
of module generators of $H^*(G,M)$ for all $\F_p G$-modules $M$.
For example, for $G=\Z/2\times \Z/2$ and an $\F_2 G$-module $M$,
define the syzygy module
$\Omega M$ as the kernel
of the surjection from a projective cover of $M$
to $M$. Then,
for $m\geq 0$ and $M=\Omega^m \F_2$,
$H^*(G,M)$ needs a generator of degree $m$ as a module over $H^*(G,\F_2)$,
by Benson and Carlson's results on products in Tate cohomology
\cite[Lemma 2.1 and Theorem 3.1]{BC}. See the proof of Theorem
\ref{klein} for more details.
\end{remark}

\begin{proof}
(Theorem \ref{degreebound})
Statement (1) is known for $M=\Z$ \cite[Theorem 5.2]{Totarobook}.
(Moreover, the bound $n(n-1)/2$ is optimal
\cite[section 5.2]{Totarobook}.) The same statement
applies to all subgroups $H$ of $G$, as modules over the same ring
$\Z[c_1V,\ldots,c_nV]$. The transfer from $CH^i(BH,M)$
to $CH^i(BG,M)$ is linear over $CH^*BG$, hence over $\Z[c_1V,
\ldots,c_nV]$. Therefore, Theorem \ref{transfer} gives
that $CH^*(BG,M)$ is generated 
by elements of at most $n(n-1)/2$ as a module over
$\Z[c_1V,\ldots,c_nV]$.

For statement (2), observe that $CH^i(BG_k,M)$ is killed by the order
of $G$, by pullback and pushforward along $EG_k\to BG_k$.
Therefore, it suffices to prove the desired bounds for generators
of $CH^*(BG_k,M)/l$ for each prime number $l$. Use that
for each finite group $G$,
the graded ring $CH^*(BG)/l$
has Castelnuovo--Mumford regularity at most 0
\cite[Theorem 6.5]{Totarobook}. Under the assumptions of (2),
it follows that $CH^*(BG)/l$ is generated by elements
of degree at most $\sum(|x_i|-1)$ as a module
over $\F_l[x_1,\ldots,x_m]$ \cite[Lemma 3.10, Theorem 3.14]{Totarobook}.
The same bound applies to every subgroup of $G$. As in part (1),
it follows that $CH^*(BG,M)/l$ is generated by elements
of degree at most $\sum(|x_i|-1)$ as a module over $\F_l[x_1,\ldots,x_m]$.

Likewise, statement (3) is known for $M=\Z$ \cite[Lemma 6.3]{Totarobook}.
By Theorem \ref{transfer}, the same bound holds
for any $\Z G$-module $M$.
\end{proof}

Thus we have strong bounds for generators of the twisted Chow groups
of a finite group, stronger than what is true for cohomology.
We may ask the same question about relations.

\begin{question}
Let $G$ be a finite group, with a faithful complex representation $V$
of dimension $n$. By Theorem \ref{degreebound}, $CH^*(BG_{\C},M)$
is generated in degrees at most $n(n-1)/2$ as a module over
$\Z[c_1V,\ldots,c_nV]$, for all $\Z G$-modules $M$. Is there also a bound
for the degrees of relations in $CH^*(BG_{\C},M)$ that depends only on $G$?
\end{question}

One natural approach fails: the Castelnuovo--Mumford regularity
of $CH^*(BG,M)$ can be arbitrarily large, for a fixed group $G$ (Remark
\ref{regularity}).

\section{The group $\Z/2\times \Z/2$}

We now compute the Chow groups of $G=\Z/2\times\Z/2$ with coefficients
in any $\F_2 G$-module. Here $\Z/2\times \Z/2$
is the simplest finite group with $p$-rank
greater than 1, and hence with non-periodic cohomology. The calculations
show some new phenomena (Remarks \ref{kleinremark}
and \ref{regularity}, and Theorem \ref{counterexample}). To understand
the statement, note that for any field $k$ of characteristic not 2,
$CH^*BG_k$ is isomorphic to $\Z[u,v]/(2u,2v)$, where $u$ and $v$ are
first Chern classes of 1-dimensional representations of $G$
\cite[Theorem 2.10 and Lemma 2.12]{Totarobook}.

\begin{theorem}
\label{klein}
Let $G=\Z/2\times \Z/2$, and let $M$ be an
$\F_2 G$-module. Let $k$ be a separably closed field of characteristic not 2.
Then
$$CH^*(BG_k,M)\cong \im(M^G\otimes_{\Z} CH^*BG_k\to H^*(BG,M)).$$
\end{theorem}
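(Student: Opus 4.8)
The plan is to reduce the claim to a statement about $CH^1(BG_k,M)$, then to verify that the transfer generators of Theorem~\ref{transfer} all factor through the product map $M^G\otimes_{\Z}CH^*BG_k\to CH^*(BG_k,M)$, and finally to control the kernel of the cycle map $CH^*(BG_k,M)\to H^*(BG,M)$ using the results of Section~11 (Theorem~\ref{injective} and Corollary~\ref{injectivecor}). One inclusion is essentially formal: any element of the image of $M^G\otimes_{\Z}CH^*BG_k$ maps into the image of $M^G\otimes_{\Z}CH^*BG_k$ in $H^*(BG,M)$, simply by naturality of the cycle map and the fact that it sends $CH^*BG_k$ to $H^*(BG,\F_2)$ compatibly with the module structure. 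The content is the reverse inclusion: that $CH^*(BG_k,M)$ is \emph{no bigger} than $M^G\otimes CH^*BG_k$ allows, and that the cycle map detects the difference.

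\textbf{Step 1: generation in degree $\leq 1$.} First I would use Theorem~\ref{degreebound}(1) with the faithful $2$-dimensional representation $V=L_1\oplus L_2$ (characters $u$ and $v$), which has $n=2$, so $CH^*(BG_k,M)$ is generated in degree $\leq n(n-1)/2=1$ as a module over $\Z[c_1L_1,c_1L_2]=\Z[u,v]/(2u,2v)=CH^*BG_k$. Thus it suffices to prove the theorem in degrees $0$ and $1$: in degree $0$ both sides are $M^G$, and in higher degree the claim for $CH^0$ and $CH^1$ propagates through multiplication by $u$ and $v$, provided the analogous multiplicativity holds on the cohomology side, which it does since $H^*(G,M)$ is a module over $H^*(G,\F_2)=\F_2[u,v]$ generated (as we are about to see) in degrees $\leq 1$ over the image of $M^G$ — actually we do \emph{not} need that on the cohomology side; we only need the cycle map to be a map of $CH^*BG_k$-modules, so that once $CH^1(BG_k,M)$ is identified with its image in $H^2(G,M)$, multiplying by monomials in $u,v$ preserves that identification.

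\textbf{Step 2: identify $CH^1(BG_k,M)$ with its image in $H^2(G,M)$.} By Theorem~\ref{transfer}, $CH^1(BG_k,M)$ is generated by $\tr^G_{G_x}(xy)$ for $x\in M$ and $y\in CH^1(B(G_x)_k)$; since $G$ is abelian, $G_x=G$ unless... in fact $G_x=G$ always because $G$ acts on $M$ and $G_x$ is the centralizer in $G$ of $x\in M$, but $M$ is a module not a group — here $G_x$ means the stabilizer of $x\in M$, so $G_x$ can be any subgroup. For each such subgroup $H=G_x$ (so $x\in M^H$), the element $xy\in M^H\otimes CH^1BH_k$ lands in $CH^1(BH_k,M)$, and by Theorem~\ref{cyclic} applied to the cyclic group $H$ (order $1$, $2$, or — no, $H\leq G$ so $|H|\in\{1,2\}$... wait $H$ could be all of $G$), we split into cases: if $H$ is cyclic then $CH^1(BH_k,M)$ is already the image of $M^H\otimes CH^1BH_k$ by Theorem~\ref{cyclic}, so these generators are manifestly in the image of the product map after transfer; if $H=G$ then $x\in M^G$ and $y\in CH^1BG_k$, again in the image of the product map directly. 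Either way, $CH^1(BG_k,M)$ is spanned by transfers of products, each of which, by the projection formula $\tr^G_H(x\cdot y)=\tr^G_H((\res^G_H x')\cdot y)=x'\cdot\tr^G_H(y)$ when $x=\res^G_H x'$ — but $x$ need not extend to $M^G$. Here is the crucial input: for $G=\Z/2\times\Z/2$ and the \emph{index-$2$} subgroups $H$, which are the only proper non-trivial ones, $CH^1BH_k\cong\Z/2$ is generated by a class $w$ that is the restriction of a class in $CH^1BG_k$ (indeed every character of $H\cong\Z/2$ extends to $G$). So $\tr^G_H(x\cdot\res^G_H c)=\tr^G_H(x)\cdot c$ lies in the image of $M\otimes CH^1BG_k$, and one checks $\tr^G_H(x)\in M^G$. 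Hence $CH^1(BG_k,M)$ equals the image of $M^G\otimes CH^1BG_k\to CH^1(BG_k,M)$. Then by Corollary~\ref{injectivecor}(2) (with $l=2$; since $M$ is an $\F_2 G$-module the prime-to-$2$ torsion kernel vanishes) the cycle map $CH^1(BG_k,M)\to H^2_{\et}(BG_k,M(1))\cong H^2(G,M)$ is injective, so $CH^1(BG_k,M)\xrightarrow{\ \sim\ }\im(M^G\otimes CH^1BG_k\to H^2(G,M))$.

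\textbf{Step 3: propagate by periodicity-free multiplication and conclude.} Combining Steps 1 and 2: $CH^*(BG_k,M)$ is generated over $CH^*BG_k$ by $CH^{\leq 1}(BG_k,M)$, each generator being in the image of the product map $M^G\otimes CH^*BG_k\to CH^*(BG_k,M)$; hence the product map is surjective. The cycle map carries it onto $\im(M^G\otimes CH^*BG_k\to H^*(BG,M))$. It remains to see the composite $M^G\otimes CH^*BG_k\to CH^*(BG_k,M)\to H^*(BG,M)$ and the direct map $M^G\otimes CH^*BG_k\to H^*(BG,M)$ have the same image (clear, as the cycle map is a $CH^*BG_k$-module map sending $1\in CH^0$ appropriately and the triangle commutes on $M^G$ in degree $0$), and that the cycle map $CH^*(BG_k,M)\to H^*(BG,M)$ is \emph{injective}, so that no identifications are lost. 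For injectivity I would argue degree by degree: an element of $CH^i(BG_k,M)$ killed in $H^{2i}(BG,M)$ is, by surjectivity of the product map, of the form $\sum x_a\otimes m_a$ with $m_a\in M^G$, $x_a$ a monomial of degree $i$ in $u,v$; its vanishing in cohomology, combined with the structure of $H^*(G,M)$ as a module over $\F_2[u,v]$ — specifically that multiplication by $u,v$ is \emph{injective} on the image of $M^G$ in high enough degrees (this is where $G=\Z/2\times\Z/2$, with $H^*(G,\F_2)=\F_2[u,v]$ a polynomial ring and $H^*(G,M)$ finitely generated, gives that the image of $M^G\otimes\F_2[u,v]$ is a free-ish submodule in large degrees) — forces the element to vanish. \textbf{The main obstacle} is exactly this last injectivity/faithfulness point in degrees $i\geq 2$: Corollary~\ref{injectivecor} only handles $i=1$, so for higher $i$ I must leverage the $\F_2[u,v]$-module structure to lift injectivity from degree $1$, using that $u$ and $v$ act injectively on the relevant submodule of $H^*(G,M)$ — equivalently, that the product map $M^G\otimes CH^*BG_k\to CH^*(BG_k,M)$ has the same kernel as $M^G\otimes\F_2[u,v]\to H^*(G,M)$. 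Making this precise will likely require a small Tate-cohomology computation identifying $\im(M^G\otimes H^*(G,\F_2)\to H^*(G,M))$ with $(M^G/\text{something})\otimes\F_2[u,v]$ in large degrees and checking the low-degree boundary case by hand.
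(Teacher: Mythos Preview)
Your Steps 1 and 2 are correct and establish surjectivity of $M^G\otimes CH^*BG_k\to CH^*(BG_k,M)$, though note that the paper gets this more directly: it invokes Theorem~\ref{degreebound}(2) with generators $x_1=u$, $x_2=v$ of degree~1, giving the bound $\sum(|x_i|-1)=0$, so generation in degree~0 is immediate and your transfer/projection-formula argument in Step~2 is not needed. (Your argument is valid, just longer.)

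The genuine gap is Step~3, and it is more serious than you suggest. You need injectivity of $CH^i(BG_k,M)\to H^{2i}(G,M)$ for all $i$, and your proposed mechanism---lifting injectivity from $i=1$ via the $\F_2[u,v]$-module structure---does not work as stated. Your claim that ``multiplication by $u,v$ is injective on the image of $M^G$ in high enough degrees'' is false: for $M=\Omega^m\F_2$ with $m\geq 2$, the paper shows that $CH^i(BG,M)=0$ for $i\geq m/2$ (Remark~\ref{kleinremark}), so multiplication by $u,v$ is eventually the zero map, and the image in $H^{\ev}(G,M)$ vanishes there too. More to the point, knowing the kernel of $M^G\otimes CH^1BG\to H^2(G,M)$ does not determine the kernel of $M^G\otimes CH^iBG\to H^{2i}(G,M)$: the relations in $CH^*(BG,M)$ as an $\F_2[u,v]$-module can appear in arbitrarily high degree (Remark~\ref{regularity} computes that the regularity of $CH^*(BG,\Omega^m\F_2)$ grows with $m$), and there is no formal reason why a relation holding in cohomology must already hold in twisted Chow.

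The paper resolves this by a route you do not mention: it base-changes $M$ to $F=\overline{\F_2}$, invokes the classification of indecomposable $FG$-modules for $G=\Z/2\times\Z/2$ (namely $FG$, $\Omega^n F$ for $n\in\Z$, and $L_{\zeta^n}$ for $\zeta\in\P(H^1(G,F))$), and for each type carries out an explicit computation of both $CH^*(BG,M)$ and its image in $H^{\ev}(G,M)$, using Benson--Carlson's results on products in negative Tate cohomology to handle the $\Omega^m F$ case with $m>0$. This is a substantial case analysis, not a ``small Tate-cohomology computation,'' and I do not see how to replace it by the kind of uniform module-theoretic argument you sketch.
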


\begin{proof}
First let $k=\C$. It suffices to show two statements: the product map
$$M^G\otimes CH^*BG = CH^0(BG,M)\otimes CH^*BG\to CH^*(BG,M)$$
is surjective, and $CH^*(BG,M)$ injects into $H^*(BG,M)=
H^*(G,M)$ (using the classical topology). Let $F$ be an algebraic closure
of $\F_2$. Replacing $M$ by $M\otimes_{\F_2}F$ changes all these groups
by tensoring up to $F$. Therefore, it suffices to prove both statements
for $M\otimes_{\F_2}F$. That is, we can assume from now on that $M$
is an $FG$-module.

Write $G=\langle g,h:g^2=1, h^2=1, gh=hg\rangle$.
Let $L_1$ and $L_2$ be the 1-dimensional representations of $G$
over $k$ given by $g\mapsto -1$, $h\mapsto 1$ (for $L_1$)
and $g\mapsto 1$, $h\mapsto -1$ (for $L_2$). Let $u=c_1L_1$ and $v=c_1L_2$
in $CH^1BG$; then $CH^*BG=\Z[u,v]/(2u,2v)$.
The representation $L_1\oplus L_2$ of $G$ is faithful, and its Chern
classes are polynomials in $u$ and $v$. By Theorem \ref{degreebound}(2),
it follows that $CH^*(BG,M)$ is generated by elements of degree 0
as a module over $CH^*BG$. That is, the product map
$$M^G\otimes CH^*BG\to CH^*(BG,M)$$
is surjective.

For later use, the cohomology ring of $G$ is the polynomial ring
$H^*(BG,F)\cong F[x,y]$ with $|x|=|y|=1$. We can choose the generators
so that the cycle map $CH^*(BG)\otimes F\to H^*(BG,F)$ takes $u$ to $x^2$
and $v$ to $y^2$.

It remains to show that $CH^*(BG,M)\to H^*(BG,M)$ is injective.
These groups commute with direct limits of $FG$-modules;
so we can assume that $M$ is an $FG$-module of finite dimension over $F$.
Using direct sums, we can also assume that $M$ is indecomposable.
Then we can use the classification of indecomposable
$FG$-modules, as follows \cite[v.~1, Theorem 4.3.3]{Benson}.
For a finite-dimensional
$FG$-module $M$, define the syzygy module $\Omega M$ as the kernel of
the surjection from a projective cover of $M$ to $M$;
then $\Omega M$ is well-defined
up to isomorphism. (Likewise, define the shift
$\Omega^{-1}M$ as the cokernel
of the inclusion from $M$ to its injective hull.)
Each element $f$ of $H^n(G,F)$ with $n>0$
is represented by a map $\Omega^n F\to F$ of $FG$-modules;
for $f\neq 0$,
define $L_f$ to be the kernel. Then, for $G=\Z/2\times\Z/2$,
every indecomposable $FG$-module
is isomorphic to either $FG$, $\Omega^n F$ for some $n\in\Z$,
or $L_{\zeta^n}$ for some $n>0$ and some nonzero $\zeta\in H^1(G,F)$.
Here $\zeta$ only matters up to scalars, and so the last type of module
is determined (up to isomorphism) by $n>0$ and
a point in $\P(H^1(G,F))\cong \P^1(F)$. Here
$\Omega^n F$ has dimension $2|n|+1$ and $L_{\zeta^n}$ has dimension
$2n$.

In each case, we use that $CH^0(BG,M)=H^0(BG,M)=M^G$
and $CH^1(BG,M)\to H^2(BG,M)$ is injective
(Corollary \ref{injectivecor}).
First, let $M=FG$. Then $H^2(G,M)=0$, and so $CH^1(BG,M)=0$.
Since $M^G\otimes_F CH^*BG\to CH^*(BG,M)$ is surjective
(and $CH^*(BG)\otimes F=F[u,v]$ is generated in degree 1),
it follows that $CH^i(BG,M)=0$ for $i>0$. Thus $CH^*(BG,M)\to
H^*(BG,M)$ is injective.

It will be useful to recall the description
of Tate cohomology for finite groups \cite[section VI.4]{Brown}:
$\widehat{H}^j(G,M)$ is isomorphic to $H^j(G,M)$
if $j>0$, to $H_{-1-j}(G,M)$ if $j\geq -2$, and $\widehat{H}^{-1}(G,M)$
and $\widehat{H}^0(G,M)$ are the kernel and cokernel of the trace map:
$$0\to\widehat{H}^{-1}(G,M)\to M_G\xrightarrow[\tr]{}
M^G\to \widehat{H}^0(G,M)\to 0.$$

For each indecomposable $FG$-module other than $FG$, I claim that the trace map
$\tr\colon M\to M$ is zero. (This holds more generally for any $p$-group.)
If not, let $x$ be an element of $M$
with $\tr(x)\neq 0$. Then there is an $FG$-linear map $f\colon FG\to M$
that takes 1 to $x$, and hence $\tr(1)$ to $\tr(x)\neq 0$. But $\tr(1)$ in $FG$
spans the socle, $(FG)^G\cong F$. It follows that $f$ is injective. Since
the $FG$-module $FG$ is injective as well as projective
\cite[Proposition 3.1.10]{Benson}, it follows that
$M$ contains $FG$ as a summand. Thus, for $M$ indecomposable and not
isomorphic to $FG$, the trace is zero on $M$. Equivalently, $M^G=H^0(G,M)$
maps isomorphically to $\widehat{H}^0(G,M)$. This is relevant because
we have more direct access to $\widehat{H}^0(G,M)$ in the following
calculations, and hence we can read off $M^G$.

Next, let $M=F$. We have $H^*(BG,F)
\cong F[x,y]$, and $CH^*(BG)\to H^*(BG,F)$ sends $u\mapsto x^2$
and $v\mapsto y^2$. It follows that
$CH^*(BG,M)=F[u,v]$ injects into $H^*(BG,F)$, as we want.

Next, let $M=\Omega^{-m}F$ with $m>0$. Then $\widehat{H}^i(G,M)
\cong \widehat{H}^{i+m}(G,F)\cong F^{i+m+1}$ for $i\geq 0$.
In particular,
$CH^1(BG,M)$ is the image of $M^G\otimes CH^1BG$ in $H^2(BG,M)$,
thus the image of $F\{ x^m,x^{m-1}y,\ldots,y^m\}\otimes_F F\{x^2,y^2\}$,
which is all of $H^2(BG,M)\cong F\{ x^{m+2},x^{m+1}y,\ldots,y^{m+2}\}$.
Therefore, $CH^*(BG,M)$ is a quotient of the $F[u,v]$-module
$F[u,v]\{ e_0,\ldots,e_m\}/(ue_{i+2}-ve_{i})$ for $0\leq i \leq m-2$,
where $e_i$ maps to $x^{m-i}y^i$ in $M^G/\tr(M)$. But we compute that this
$F[u,v]$-module maps isomorphically to $H^{\ev}(BG,M)$ (that is,
to the subspace of $F[x,y]$ spanned by homogeneous polynomials of degree
at least $m$ and congruent to $m$ modulo 2). Therefore,
$CH^*(BG,M)\to H^{\ev}(BG,M)$ is an isomorphism (hence injective,
as we want).

Next, let $M=\Omega^mF$ with $m>0$. Then $\widehat{H}^i(G,M)
\cong \widehat{H}^{i-m}(G,F)$. These vector spaces decrease
from dimension $m$ (when $i=0$) to dimension 1 (when $i$ is $m-1$
or $m$) and then increase again, by the description of Tate cohomology
above.
For $i\geq 0$ and $j<-i$, the product
$\widehat{H}^i(G,F)\times \widehat{H}^j(G,F)\to \widehat{H}^{i+j}(G,F)$
can be identified with
the cap product of cohomology with homology (which is dual to the product
on cohomology in positive degrees, hence usually nonzero).
On the other hand, products
from negative degree into nonnegative degree are zero. Namely,
Benson and Carlson showed that for $i>0$ and $-i\geq j<0$,
for $G=\Z/2\times \Z/2$ (as for many other groups of $p$-rank at least 2),
the product
$\widehat{H}^i(G,F)\times \widehat{H}^j(G,F)\to \widehat{H}^{i+j}(G,F)$
is zero \cite[Lemma 2.1 and Theorem 3.1]{BC}.

For $m=1$ (that is, $M=\Omega F$), it follows that the image of the product
map $M^G\otimes CH^1BG\to H^2(BG,M)$ is zero, and hence $CH^1(BG,M)=0$.
Since $M^G\otimes CH^*BG\to CH^*(BG,M)$ is surjective (and $CH^*BG$
is generated in degree 1), it follows that $CH^i(BG,M)=0$ for all $i>0$.
Thus $CH^*(BG,M)\to H^*(BG,M)$ is injective, as we want.

For $m>1$, let $R=CH^*(BG)\otimes F=F[u,v]$. Then in degrees at most 1,
$CH^*(BG,M)$ agrees with the $R$-module $N:=R\{e_0,\ldots,e_{m-1}\}
/(ue_{i+2}=ve_i \text{ for }0\leq i\leq m-3, ue_0=0, ue_1=0,
ve_{m-2}=0, ve_{m-1}=0)$, using that $CH^1(BG,M)$ is the image of
$M^G\otimes CH^1BG\to H^2(BG,M)$. Since $M^G\otimes CH^*BG\to CH^*(BG,M)$
is surjective in all degrees, we have a surjection of $R$-modules
from $N$ to $CH^*(BG,M)$. But we compute that $N^i$ maps isomorphically
to $H^{2i}(BG,M)$ for $0\leq i<m/2$, and $N$ is zero in higher degrees.
In particular, $N$ injects into $H^*(BG,M)$. Therefore, $N$ maps
isomorphically to $CH^*(BG,M)$, and $CH^*(BG,M)$ injects into
$H^*(BG,M)$ as we want.

Finally, let $M$ be the $FG$-module $L_{\zeta^n}$, for a positive integer
$n$ and a nonzero element $\zeta$ in $H^1(BG,F)=F\{ x,y\}$.
Choose one of $x$ or $y$ which is linearly independent of $\zeta$,
say $x$ (without loss of generality).
Then $\widehat{H}^*(G,M)$ is periodic with period 1; namely,
multiplication by $x$ is an isomorphism on this $F[x,y]$-module
\cite[v.~1, Corollary 5.10.7]{Benson}.

To describe the cohomology of $M=L_{\zeta^n}$
in more detail: by the exact sequence
$$0\to M\to \Omega^nF\to F\to 0,$$
we have $H^{i+1}(G,M)\cong H^i(G,F)/\zeta^n H^{i-n}(G,F)$
for $i\geq n-1$. (This uses that $\zeta^n$ is a non-zero-divisor
in $H^*(G,F)=F[x,y]$.) Since $\zeta$ is linearly independent of $x$,
we can also view $H^*(G,F)$ as the polynomial ring $F[x,\zeta]$.
Using the periodicity of $\widehat{H}^*(G,M)$, let us identify
$M^G=\widehat{H}^0(G,M)$ with $H^n(G,M)\cong H^{n-1}(G,F)$.
As such, $M^G$ has a basis $e_i = x^{n-1-i}\zeta^i$ with
$0\leq i\leq n-1$. Let $w=\zeta^2$; then $CH^*(BG)\otimes F=F[u,w]$.
Then $CH^1(BG,M)\cong \im(M^G\otimes_F (CH^1(BG)\otimes F)
\to H^2(BG,M))$ is spanned by $ue_i$ and $we_i$ for $0\leq i\leq n-1$,
modulo the relations $ue_{i+2}=we_{i}$ for $0\leq i\leq n-3$,
$we_{n-2}=0$, and $we_{n-1}=0$. 

Let $R=CH^*(BG)\otimes F=F[u,w]$.
Since $M^G\otimes_F R\to CH^*(BG,M)$ is surjective,
$CH^*(BG,M)$ is a quotient of the graded $R$-module
$$N:=R\{ e_0,\ldots,e_{n-1}\}/(ue_{i+2}=we_{i}
\text{ for }0\leq i\leq n-3, we_{n-2}=0, we_{n-1}=0).$$
But we compute that this module $N$ maps isomorphically
to $H^{\ev}(BG,M)$, viewed as the part of $F[x,\zeta]/(\zeta^n)$
in degrees at least $n-1$. Therefore, $N$ maps isomorphically
to $CH^*(BG,M)$, and $CH^*(BG,M)$ maps isomorphically
to $H^{\ev}(BG,M)$, hence injectively, as we want.

That completes the proof for $k=\C$. More generally, let $k$
be a separably closed field of characteristic not 2. Then
the Hochschild--Serre spectral sequence for $EG_k\to BG_k$ shows
that $H^*_{\et}(BG_k,M)$ is isomorphic to $H^*(G,M)$, for every
$\F_2 G$-module $M$ \cite[Theorem III.2.20]{Milne}.
Also, Theorem \ref{injective}
shows that $CH^1(BG_k,M)\to H^2_{\et}(BG_k,M(1))$ is injective.
(The twist here is irrelevant, since the \etale sheaf
$\mu_2$ is canonically isomorphic to $\Z/2$ over $k$.) Given that,
the arguments over $\C$ work without change over $k$.
\end{proof}

\begin{remark}
\label{kleinremark}
For the group $G=\Z/2\times \Z/2$, Theorem \ref{klein}
shows that $CH^*(BG_{\C},M)$
maps injectively to $H^*(BG,M)$, for all $\F_2 G$-modules $M$.
But for some modules $M$,
this map is far from an isomorphism. In particular, for $m>0$,
we have shown that
$CH^*(BG,\Omega^m \F_2)$ is zero in degrees at least $m/2$, whereas
$H^*(BG,\Omega^m \F_2)$ contains $H^*(BG,\F_2)=\F_2[x,y]$ (shifted in degree)
as a submodule. Thus the ``support variety'' of $H^*(BG,M)$ is all
of $\Spec H^*(BG,\F_2)=A^2_{\F_2}$, while the support variety of $CH^*(BG,M)$
is only the origin in $\Spec CH^*(BG)/2=A^2_{\F_2}$.
\end{remark}

\begin{remark}
\label{regularity}
For a finite group $G$ and a prime number $p$,
the Castelnuovo--Mumford regularity
of $CH^*(BG_{\C})/p$ is at most zero
\cite[Theorem 6.5]{Totarobook}. In terms of a faithful representation
$V$ of $G$ over $\C$, with $n:=\dim(V)$, this regularity bound
amounts to an upper bound for the degrees
of generators, relations, relations between relations, and so on,
for $CH^*(BG_{\C})/p$ as a graded module over the Chern classes
$\F_p[c_1V,\ldots,c_nV]$.

We have seen that there is a bound for the degrees of generators
of $CH^*(BG_{\C},M)$ as a module over $\F_p[c_1V,\ldots,c_nV]$
for all $\F_p G$-modules $M$, depending only on $G$
(Theorem \ref{degreebound}). However,
the regularity of $CH^*(BG_{\C},M)$ does not have such a bound.
Take $G=\Z/2\times \Z/2$ and $M=\Omega^m\F_2$ for $m\geq 2$.
Let $R=CH^*(BG)/2=\F_2[u,v]$. By the properties of regularity,
$CH^*(BG,M)$ has the same regularity over $R$ as over the Chern
classes of a faithful representation \cite[Lemma 3.10]{Totarobook}.

By the calculation of $CH^*(BG,M)$ in the proof of Theorem
\ref{klein} plus the Hilbert syzygy theorem \cite[Corollary 19.7]{Eisenbud},
$CH^*(BG,M)$ has a graded free
resolution over $R$ of the form
$$0\to R^{\oplus 2}\to R^{\oplus m+2}\to R^{\oplus m}\to CH^*(BG,M)\to 0.$$
Here the generators of $R^{\oplus m}$ are in degree 0, and the generators
of $R^{\oplus m+2}$ (corresponding to the relations in $CH^*(BG,M)$)
are in degree 1. From the Hilbert series of $CH^*(BG,M)$,
we compute that the module of ``relations between relations'' $R^{\oplus 2}$
has generators in degrees $\lfloor (m+2)/2\rfloor$
and $\lfloor (m+3)/2\rfloor$. Therefore, $CH^*(BG,M)$ has
Castelnuovo--Mumford regularity $\lfloor (m+3)/2\rfloor-2
=\lfloor (m-1)/2\rfloor$ \cite[Theorem 3.14]{Totarobook}.
In particular, the regularity
of $CH^*(BG,M)$ cannot be bounded in terms of $G$.
\end{remark}

\section{Twisted Chow groups vs.\ twisted motivic cohomology}

We now show that the surjection from 
twisted motivic cohomology $H^{2i}_{\M}(X,M(i))$
to twisted Chow groups $CH^i(X,M)$ is not always
an isomorphism.
As a result, one might think that the definition of twisted Chow
groups from section \ref{definition} is ``wrong'', and that the definition
should be changed to agree with twisted motivic cohomology.
Given the good properties of twisted Chow groups
from section \ref{definition}, however,
I believe that twisted Chow groups are worth studying. They form a nontrivial
intermediary between twisted motivic cohomology
and twisted \etale cohomology.
An advantage of twisted Chow groups
is that $CH^1(X,M)$ injects into \etale motivic cohomology
$H^2_{\et}(X,M(1))$, whereas (as we will see)
$H^2_{\M}(X,M(1))$ does not always inject into $H^2_{\et}(X,M(1))$.

\begin{theorem}
\label{counterexample}
(1) There is a smooth complex variety $X$
with a locally constant \etale sheaf $M$ such that the maps
$$H^2_{\M}(X,M(1))\to H^2_{\et}(X,M(1))$$
and
$$H^2_{\M}(X,M(1))\to CH^1(X,M)$$
are not injective. 

(2) There is a short exact sequence
$0\to A\to B\to C\to 0$ of locally constant \etale sheaves on $X$
such that $A$ is coflasque but the sequence
$$CH^1(X,A)\to CH^1(X,B)\to CH^1(X,C)$$
is not exact.
\end{theorem}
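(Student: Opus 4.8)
The plan is to show that parts (1) and (2) are two faces of one phenomenon, governed by the theory of coflasque lattices, and then to realise that phenomenon with monodromy group $\Z/2\times\Z/2$.

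First I would make a formal reduction. Fix a short exact sequence $0\to A\to B\to C\to 0$ of locally constant \etale sheaves on a smooth complex variety $X$, with $A$ coflasque and $B$ a permutation module for some finite quotient of $\pi_1^{\et}X$. Over $\C$ one has $H^3_{\M}(X,A(1))=0$: by the coniveau/localization spectral sequence (as in the proof of Corollary \ref{higher}) this group is assembled from the groups $H^{3-2a}_{\M}(k(x),A(1-a))$ over codimension-$a$ points $x$, and each vanishes — for $a\le 1$ by Lemma \ref{overfield} (which applies since every field of characteristic zero is perfect), and for $a\ge 2$ because $\Z(1-a)$ is concentrated in cohomological degree $1$. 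Hence Lemma \ref{motexact} gives $H^2_{\M}(X,C(1))\cong\coker\big(H^2_{\M}(X,A(1))\to H^2_{\M}(X,B(1))\big)$. A Shapiro-type argument (as in Lemma \ref{induced}) identifies $H^2_{\M}(X,B(1))$ with $\oplus_i CH^1(Y_i)$ for the relevant covers $Y_i$ of $X$, compatibly with $CH^1(X,B)\cong\oplus_i CH^1(Y_i)$ (Lemma \ref{induced}), so the comparison map $H^2_{\M}(X,B(1))\to CH^1(X,B)$ is an isomorphism. Combined with the surjection $H^2_{\M}(X,A(1))\to CH^1(X,A)$ of Corollary \ref{surjective}, this yields
$$H^2_{\M}(X,C(1))\cong\coker\big(CH^1(X,A)\to CH^1(X,B)\big).$$
On the other hand $CH^1(X,B)\to CH^1(X,C)$ is surjective (Theorem \ref{coflasque}(1)) and annihilates the image of $CH^1(X,A)$, so $CH^1(X,C)$ is a quotient of the same cokernel, and the two coincide exactly when $CH^1(X,A)\to CH^1(X,B)\to CH^1(X,C)$ is exact in the middle. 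Finally, by Theorem \ref{injective} the composite $H^2_{\M}(X,C(1))\to CH^1(X,C)\hookrightarrow H^2_{\et}(X,C(1))$ is the natural comparison map. Thus all three assertions of the theorem hold at once as soon as I exhibit one such $X$ and one such sequence that fails to be exact in the middle, taking $M:=C$ in part (1).

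Next I would construct the example with $G=\Z/2\times\Z/2$. Since $G$ is non-cyclic, Endo--Miyata's theorem (quoted before Theorem \ref{coflasque}) furnishes a coflasque $\Z G$-lattice $A$ that is not invertible. Dualising a coflasque resolution of $A^{*}$ produces an exact sequence $0\to A\to B\to C\to 0$ with $B$ a permutation module and $C$ flasque, $A$ a $\Z$-pure submodule of $B$. For $X$ I would take a smooth approximation to $BG$ over $\C$ in codimension $\le 2$, i.e.\ $X=(V-S)/G$ for a faithful representation $V$ of $G$ and a closed $G$-invariant subset $S$ of codimension $>2$, so that $k(X)$ carries a versal $G$-torsor and $CH^1(X,-)$ computes $CH^1(BG_{\C},-)$; the modules $A,B,C$ become locally constant \etale sheaves on $X$ via the torsor.

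Finally I would verify the non-exactness. By the diagram chase in the proof of Theorem \ref{coflasque}(2), the only obstruction to exactness of $CH^1(X,A)\to CH^1(X,B)\to CH^1(X,C)$ in the middle lies in $\oplus_{x\text{ generic}}H^2(k(x),A(1))$; since $A$ is torsion free, $A(1)=(A\otimes_{\Z}G_m)[-1]$, so this group is $H^1(k(X),T_A)$ for the torus $T_A=A\otimes_{\Z}G_m$ split by the $G$-torsor. Because $A$ is not invertible, $T_A$ is a flasque torus that is not retract rational (Colliot-Th\'el\`ene--Sansuc), hence has nonzero $H^1$ over a suitable splitting field; by versality of $k(X)$ this nonvanishing — indeed of the unramified subgroup — persists over $k(X)$, and one checks that the resulting class genuinely obstructs, i.e.\ that $CH^1(X,A)\to CH^1(X,B)$ does not surject onto $\ker\big(CH^1(X,B)\to CH^1(X,C)\big)$. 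Equivalently — and this is the cleanest way to phrase the endpoint — were exactness to hold for every such triple over every versal $X$, the reduction of the first paragraph would force $H^2_{\M}(X,C(1))\cong CH^1(X,C)$ for every coflasque $A$, which via the coflasque-resolution dictionary would make every coflasque $\Z G$-lattice invertible, contradicting Endo--Miyata. The main obstacle is exactly this last point: exhibiting an explicit coflasque non-invertible lattice $A$ (together with the embedding into the permutation module $B$) for which the obstruction class in $H^1(k(X),T_A)$ can be computed and shown to be nonzero. This concrete, Merkurjev--Scavia-flavoured computation is the substance of the argument; the reduction in the first two paragraphs is purely formal.
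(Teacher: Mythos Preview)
Your formal reduction in the first paragraph is correct and matches the paper's own strategy: one uses $H^3_{\M}(X,A(1))=0$ and Lemma \ref{motexact} to identify $H^2_{\M}(X,C(1))$ with $\coker\big(CH^1(X,A)\to CH^1(X,B)\big)$, so that non-injectivity of $H^2_{\M}\to CH^1$ is equivalent to non-exactness of the twisted Chow sequence in the middle. This is exactly how the paper proceeds, and your packaging of it is clean.

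The gap is in the last paragraph. Your ``cleanest endpoint'' argument is not valid: from the hypothetical exactness of $CH^1(X,A)\to CH^1(X,B)\to CH^1(X,C)$ for \emph{every} coflasque $A$ you conclude that every coflasque $\Z G$-lattice would be invertible, but no such implication has been established. Theorem \ref{coflasque}(2) shows invertibility is \emph{sufficient} for exactness; nowhere is it shown to be necessary, and there is no ``coflasque-resolution dictionary'' that recovers invertibility from the vanishing of this particular obstruction over versal fields. Likewise, knowing that $H^1(k(X),T_A)\neq 0$ for some coflasque non-invertible $A$ does not by itself mean that the \emph{specific} boundary class arising in the diagram chase is nonzero. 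You correctly identify that a concrete computation is needed here, but you do not supply one; without it, the proof is not complete.

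The paper avoids this difficulty by choosing $C$ to be a finite module rather than a lattice. It takes $C=\Omega^{-m}\F_2$ for $m\geq 2$, builds an explicit coflasque resolution $0\to A\to B\to C\to 0$ with $B=(\Z G)^m\oplus \Z^{m+1}$, and then a further permutation cover $P\twoheadrightarrow A$ with coflasque kernel. The key computational point is that the transfers $CH^1(BH_a)\to CH^1(BG)$ from the three order-2 subgroups vanish (since both sides are $2$-torsion and restriction is surjective), so the exact sequence collapses to an isomorphism $H^2_{\M}(BG,\Omega^{-m}\F_2(1))\cong CH^1(BG)^{\oplus m+1}\cong (\F_2)^{2m+2}$. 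Meanwhile Theorem \ref{klein} gives $CH^1(BG,\Omega^{-m}\F_2)\cong (\F_2)^{m+3}$. Comparing dimensions for $m\geq 2$ finishes all three claims at once. If you want to rescue your lattice-based approach, you would need to carry out a computation of comparable explicitness for a specific non-invertible coflasque $A$; the abstract existence argument via Endo--Miyata does not close the gap.
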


\begin{proof}
Let $k=\C$.
The idea is to compare twisted Chow groups with twisted motivic
cohomology for $BG_k$ with $G=\Z/2\times \Z/2
=\langle g,h:g^2=1,h^2=1,gh=hg\rangle$. (This is the smallest
group $G$ that has a coflasque $\Z G$-lattice that is not invertible.
That is relevant because of Theorem \ref{coflasque}.)
We can take the smooth
variety $X$ to be $U/G$ for any open subset $U$ of a representation $V$
of $G$ over $k$ such that $G$ acts freely on $U$
and $V-U$ has codimension at least 2 in $V$.

Let $M$ be the $G$-module $\Omega^{-m}\F_2$ with $m\geq 2$.
Then $M$ is the vector space $(\F_2)^{2m+1}$ with basis
$e_1,\ldots,e_{2m+1}$, and $G$ acts by:
$g(e_i)=e_i$ for $1\leq i\leq m+1$,
$g(e_{m+1+i})=e_i+e_{m+1+i}$ for $1\leq i\leq m$,
$h(e_i)=e_i$ for $1\leq i\leq m+1$, and
$h(e_{m+1+i})=e_{i+1}+e_{m+1+i}$ for $1\leq i\leq m$
\cite[v.~1, Theorem 4.3.3]{Benson}.

Then $M^G$ is spanned by $e_1,\ldots,e_{m+1}$. Let $H_1=\langle g\rangle$,
$H_2=\langle h\rangle$, and $H_3=\langle gh\rangle$
be the subgroups of order 2 in $G$. For $a=1,2,3$,
the subspace $M^{H_a}$ is also spanned by $e_1,\ldots,e_{m+1}$.
Define a $\Z G$-linear map from the permutation module
$B:=(\Z G)^m\oplus \Z^{m+1}$ to $M$,
sending the $\Z G$-module generators of $B$ by:
$f_i\mapsto e_{m+1+i}$ for $1\leq i\leq m$
and $f_{m+i}\mapsto e_i$ for $1\leq i\leq m+1$.
Then $B^H\to M^H$ is surjective for every subgroup $H$ of $G$,
and so the kernel $A$ is a coflasque $\Z G$-lattice.

A $\Z$-basis for $A\cong \Z^{5m+1}$ is given by the elements $s_i=2f_i$
for $1\leq i\leq 2m+1$, $s_{2m+1+i}=gf_i-f_i-f_{m+i}$
for $1\leq i\leq m$, $s_{3m+1+i}=hf_i-f_i-f_{m+1+i}$
for $1\leq i\leq m$, and $s_{4m+1+i}=ghf_i-f_i-f_{m+i}-f_{m+1+i}$
for $1\leq i\leq m$. Here $A^G$ has rank $2m+1$
and $A^{H_a}$ has rank $3m+1$ for $a=1,2,3$. In more detail,
a $\Z$-basis for $A^G$ is given by $s_{m+1},\ldots,s_{2m+1}$,
and $2s_i+s_{2m+1+i}+s_{3m+1+i}+s_{4m+1+i}$ for $1\leq i\leq m$.
A $\Z$-basis for $A^{H_1}$ is given by the basis for $A^G$ together
with $s_i+s_{2m+1+i}$ for $1\leq i\leq m$. For $A^{H_2}$,
we have the basis for $A^G$ together with $s_i+s_{3m+1+i}$
for $1\leq i\leq m$,
and for $A^{H_3}$ we have the basis for $A^G$
together with $s_i+s_{4m+1+i}$ for $1\leq i\leq m$,

Define a $\Z G$-linear map from $P:=\oplus_{a=1}^3 \Z[G/H_a]^{\oplus m}$
to $A$, sending the $\Z G$-module generators to $s_i+s_{2m+1+i}$,
$s_i+s_{3m+1+i}$, and $s_i+s_{4m+1+i}$. We read off that
$P^H\to A^H$ is surjective for every subgroup $H$ of $G$.
Therefore, the kernel of $P\to A$ is coflasque.
By Lemma \ref{motexact}, it follows that
$H^{2i}_{\M}(BG,P(i))\to H^{2i}_{\M}(BG,A(i))$ is surjective.
Using that lemma again for the coflasque resolution $0\to A\to B\to
\Omega^{-m}\F_2\to 0$, we find that
$$H^{2i}_{\M}(BG,P(i))\to H^{2i}_{\M}(BG,B(i))\to
H^{2i}_{\M}(BG,\Omega^{-m}\F_2(i))\to 0$$
is exact.

Since $P$ and $B$ are permutation modules, we can rewrite this exact
sequence as
$$\oplus_{a=1}^3 CH^i(BH_a)^{\oplus m}\to
CH^i(\Spec k)^{\oplus m}\oplus CH^i(BG)^{\oplus m+1}
\to H^{2i}_{\M}(BG,\Omega^{-m}\F_2(i))\to 0.$$
For $i>0$, we have $CH^i(\Spec k)=0$, and the maps
from $CH^iBH_a$ to $CH^iBG$ are multiples of the transfer map.
But transfer from $CH^iBH_a$ to $CH^iBG$ is zero for $i>0$, using
that these groups are killed by 2 and restriction from $CH^iBG$
to $CH^iBH_a$ is surjective.
Therefore, we have an isomorphism
$$CH^i(BG)^{\oplus m+1}\cong H^{2i}_{\M}(BG,\Omega^{-m}\F_2(i))$$
for $i>0$. (By inspection, this also holds for $i=0$.)

In particular, $H^2_{\M}(BG,\Omega^{-m}\F_2(1))\cong
(\F_2)^{2m+2}$. By Theorem \ref{klein}, we have $CH^1(BG,
\Omega^{-m}\F_2)\cong (\F_2)^{m+3}$. Thus, for $m\geq 2$,
the surjection 
$$H^2_{\M}(BG,\Omega^{-m}\F_2(1))\to CH^1(BG,
\Omega^{-m}\F_2)$$
from Corollary \ref{surjective}
is not injective. The map
$$H^2_{\M}(BG,\Omega^{-m}\F_2(1))
\to H^2_{\et}(BG,\Omega^{-m}\F_2(1))$$
factors through $CH^1(BG,
\Omega^{-m}\F_2)$, and so it is also not injective. We have now proved
two parts of the theorem.

It remains to show that the sequence $CH^1(X,A)\to CH^1(X,B)
\to CH^1(X,\Omega^{-m}\F_2)$ is not exact,
even though $A$ is coflasque. Because the surjection $P\to A$
has coflasque kernel, we know that $CH^1(X,P)\to CH^1(X,A)$
is surjective (Theorem \ref{coflasque}). So it is equivalent to show
that $CH^1(X,P)\to CH^1(X,B)\to CH^1(X,\Omega^{-m}\F_2)$ is not exact.
Since $P$ and $B$ are permutation modules,
we have to show that the sequence
$$\oplus_{a=1}^3 CH^1(BH_a)^{\oplus m}\to
CH^1(\Spec k)^{\oplus m}\oplus CH^1(BG)^{\oplus m+1}
\to CH^{1}(BG,\Omega^{-m}\F_2)\to 0.$$

The first map is a linear combination of transfers from the subgroups
$H_a$ to $G$, and so it is zero, as shown above. Therefore, we want
to show that $CH^1(BG)^{\oplus m+1}
\to CH^{1}(BG,\Omega^{-m}\F_2)$ is not an isomorphism.
The first group is isomorphic to $(\F_2)^{2m+2}$ and the second is
$(\F_2)^{m+3}$. Since $m\geq 2$, this is not an isomorphism.
The third part of the theorem is proved.
\end{proof}

\begin{remark}
\label{comparison}
Let us compare the advantages of twisted Chow groups $CH^i(X,E)$
and twisted motivic cohomology $H^{2i}_{\M}(X,E(i))$. Assume
here that $E$ is a locally constant \etale sheaf on a smooth
variety $X$ over a field $k$.

(1) When $E$ is the constant sheaf corresponding to an abelian group,
both theories agree with the usual Chow groups, $CH^i(X)\otimes_{\Z}E$.

(2) Twisted motivic cohomology has a long exact sequence
associated to a short exact sequence of locally
constant \etale sheaves $0\to A\to B\to C\to 0$ if $A$ is coflasque
(Lemma \ref{motexact}).
For twisted Chow groups, we can only say that $CH^i(X,A)\to CH^i(X,B)
\to CH^i(X,C)$ is exact if $A$ is invertible (Theorems
\ref{coflasque} and \ref{counterexample}).

(3) The cycle map $CH^1(X,E)\to H^2_{\et}(X,E(1))$ is injective, whereas
$H^2_{\M}(X,E(1))\to H^2_{\et}(X,E(1))$ need not be injective
(Theorems \ref{injective} and \ref{counterexample}).
More broadly, twisted Chow groups should be closer to \etale
cohomology than twisted motivic cohomology is.
\end{remark}

\appendix

\section{The residue on \'etale motivic cohomology}
\label{residuesection}

Here we construct the residue homomorphism on \etale
motivic cohomology twisted by a locally constant sheaf $E$,
$$\partial_v\colon H^a(F,E(a))\to H^{a-1}(k(v),E(a-1))$$
(Corollary \ref{residue}). 
This requires extra effort when $k(v)$ has characteristic $p>0$
and $p$ does not act invertibly on $E$. We use the residue homomorphism
in section \ref{definition} to show
that Chow groups with twisted coefficients
have the desired formal properties in full generality.
This appendix uses no results from the rest of the paper.

\begin{lemma}
\label{zero}
Let $O_v$ be a discrete valuation ring, and let
$i\colon \Spec k(v)\to \Spec O_v$ be the inclusion
of the closed point. For each $a\geq 1$,
$$\Z(a-1)_{k(v)}[-2]
\cong \tau_{\leq a+2} i^{!}\Z(a)$$
in $D_{\et}(k(v))$.
\end{lemma}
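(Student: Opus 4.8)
The plan is to produce the required isomorphism from the codimension-$1$ Gysin map and then verify it is a truncated isomorphism one prime at a time. If the residue characteristic of $O_v$ is $0$ — which in the intended applications (where $O_v$ is a local ring of a variety over a field at a codimension-$1$ point) forces $e=1$ — then the Cisinski--D\'eglise absolute purity isomorphism $i^!\Z(a)\simeq\Z(a-1)_{k(v)}[-2]$ \cite[proof of Proposition 7.1.10]{CDetale} already gives the statement with no truncation needed, so I would assume the residue characteristic is $p>0$; in the applications $O_v$ then has equal characteristic $p$. The oriented structure on Spitzweck's motivic cohomology spectrum supplies a cycle/Thom class for the regular divisor $\Spec k(v)\subset\Spec O_v$, i.e.\ a map $i_*\Z(a-1)_{k(v)}[-2]\to\Z(a)_{O_v}$ in $D_{\et}(\Spec O_v)$, and adjunction turns this into a Gysin map $\gamma\colon\Z(a-1)_{k(v)}[-2]\to i^!\Z(a)_{O_v}$. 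The goal is to show $\tau_{\le a+2}\gamma$ is an isomorphism, which I would do by checking $\gamma$ after inverting $p$ and after reducing mod $p^n$, then recombining.

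After inverting $p$ there is nothing new: $\gamma[1/p]$ is the Cisinski--D\'eglise absolute purity isomorphism $\Z[1/p](a-1)_{k(v)}[-2]\xrightarrow{\ \sim\ }i^!\Z[1/p](a)_{O_v}$, an isomorphism in all cohomological degrees. (Here I use that $i^!$ commutes with the filtered colimit presenting $\Z[1/p](a)$, since $i^*$ does and $Rj_*$ commutes with filtered colimits of \'etale sheaves on the Noetherian scheme $\Spec O_v$, where $j\colon\Spec F\to\Spec O_v$ is the generic point.)

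The heart of the matter is the mod-$p^n$ statement. By Geisser--Levine, $\Z/p^n(a)_{\et}\simeq W_n\Omega^a_{\log}[-a]$ over regular schemes of characteristic $p$, so $\gamma\otimes^L\Z/p^n$ becomes a map $W_n\Omega^{a-1}_{k(v),\log}[-1]\to i^!W_n\Omega^a_{O_v,\log}$. I would compute the target from the localization triangle $i^!W_n\Omega^a_{O_v,\log}\simeq\mathrm{fib}\big(i^*W_n\Omega^a_{O_v,\log}\to i^*Rj_*W_n\Omega^a_{F,\log}\big)$ together with the residue exact sequence on the discrete valuation ring $0\to W_n\Omega^a_{O_v,\log}\to j_*W_n\Omega^a_{F,\log}\xrightarrow{\ \partial\ }i_*W_n\Omega^{a-1}_{k(v),\log}\to 0$ (by Bloch--Kato and Gros--Suwa: locally a logarithmic form is uniquely $\alpha+d\log(\pi)\wedge\beta$ with $\alpha,\beta$ pulled back from the residue field, and $\partial$ extracts $\beta$). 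This yields $\mathcal{H}^0 i^!W_n\Omega^a_{O_v,\log}=0$, $\mathcal{H}^1 i^!W_n\Omega^a_{O_v,\log}\cong W_n\Omega^{a-1}_{k(v),\log}$ — onto which $\gamma\otimes^L\Z/p^n$ is an isomorphism — and $\mathcal{H}^q i^!W_n\Omega^a_{O_v,\log}$ for $q\ge 2$ equal to the stalk along $\Spec k(v)$ of $R^{q-1}j_*W_n\Omega^a_{F,\log}$, which vanishes for $q\ge 3$ because the fraction field of the strict henselization of $O_v$, being of characteristic $p$, has cohomological dimension $\le1$ for $p$-torsion sheaves, and which is in general nonzero in degree $q=2$; that degree-$2$ sheaf is precisely the obstruction that forces the truncation. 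Taking the colimit over $n$ shows $\gamma\otimes^L\Q_p/\Z_p$ is an isomorphism on $\mathcal{H}^q$ for $q\le a+1$. Recombining this with the away-from-$p$ isomorphism through the arithmetic fracture square, and noting that both $\Z(a-1)_{k(v)}[-2]$ and $i^!\Z(a)_{O_v}$ vanish in degree $a+2$ (the latter because the connecting map in the fracture square for $i^!\Z(a)$ is surjective there, which follows from $\mathcal{H}^a\Z(a-1)=0$), one concludes that $\gamma$ is an isomorphism on $\mathcal{H}^q$ for all $q\le a+2$; equivalently $\tau_{\le a+2}\gamma$ is an isomorphism.

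The main obstacle, I expect, is the mod-$p^n$ step: proving exactness of the residue sequence and, above all, controlling the higher direct images $R^{\ge 1}j_*W_n\Omega^a_{F,\log}$ on the punctured trait well enough to see that they contribute only in cohomological degree $2$. Here the regularity of $O_v$ and the possible imperfectness of $k(v)$ are the delicate inputs; I would if necessary reduce to the completion $\widehat{O_v}\cong k(v)[[t]]$, or further to the case of perfect residue field by a continuity/base-change argument, where logarithmic de Rham--Witt theory is explicit. A secondary point to be checked carefully is that the integral Gysin map $\gamma$ exists and is compatible both with reducing mod $p^n$ (giving the logarithmic de Rham--Witt Gysin map) and with inverting $p$ (giving the Cisinski--D\'eglise purity isomorphism), so that the fracture-square recombination is legitimate; this should follow from the uniqueness of orientations on Spitzweck's motivic spectrum.
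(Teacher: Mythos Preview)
Your approach is sound for the equicharacteristic case and takes a genuinely different route from the paper. The paper cites Geisser's purity theorem over Dedekind rings to obtain the isomorphism $\Z(a-1)[-2]\cong\tau_{\le a+1}i^!\Z(a)$ directly, and then handles the single remaining degree $\h^{a+2}(i^!\Z(a))$ by passing to the strict henselization and comparing Zariski with \'etale motivic cohomology via the Beilinson--Lichtenbaum conjecture (Voevodsky, extended to DVRs by Geisser). You instead decompose by coefficients: Cisinski--D\'eglise absolute purity handles $\Z[1/p]$-coefficients in all degrees, Geisser--Levine together with the logarithmic de Rham--Witt residue sequence handles $\Z/p^n$-coefficients up to degree $a+1$, and then your fracture-square argument (the map $\h^{a-1}\Z[1/p](a-1)\to\h^{a-1}\Q_p/\Z_p(a-1)$ is onto because $\h^a\Z(a-1)=0$, hence $\h^{a+2}(i^!\Z(a))=0$) finishes. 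That last step is correct and is in fact the same mechanism the paper uses later in Lemma~\ref{lowcodim} for higher-codimension purity over a field, so you are anticipating that argument rather than the one given for Lemma~\ref{zero} itself.

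The trade-offs: your approach is more hands-on and avoids invoking Geisser's packaged result, but the paper's proof is shorter for exactly that reason, and more importantly it applies uniformly to mixed-characteristic DVRs (Geisser's theorems are stated over arbitrary Dedekind rings). Your reliance on $\Z/p^n(a)\simeq W_n\Omega^a_{\log}[-a]$ commits you to equal characteristic $p$ as written; that suffices for every application in the paper, but it does not prove the lemma in the generality stated. The technical worries you flag (exactness of the residue sequence for $W_n\Omega^a_{\log}$ with possibly imperfect residue field, and the $p$-cohomological-dimension bound on the punctured trait) are the right ones to flag; both are available in the literature (Gros, Gros--Suwa, Kato), so these are citations to locate rather than genuine obstructions.
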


Here $i^{!}$ is the exceptional
inverse image functor
on derived categories, sometimes called $Ri^{!}$.

\begin{proof}
First, $\Z(a-1)_{k(v)}$ is concentrated in degrees at most $a-1$
in $D_{\et}(k(v))$, and so $\Z(a-1)[-2]$ is concentrated
in degrees at most $a+1$. Next,
using the Bloch-Kato conjecture (Voevodsky's theorem),
Geisser showed that the canonical map
$$\Z(a-1)_{k(v)}[-2]
\to \tau_{\leq a+1} i^{!}\Z(a)$$
in $D_{\et}(k(v))$ is an isomorphism.
Also, the truncation is unnecessary after inverting the exponential
characteristic $e$ \cite[Theorem 1.4]{Geisser}. (These results imply
the lemma when $k(v)$ has characteristic zero.)

It remains to show
that $M:=\h^{a+2}(i^{!}\Z(a)_{O_v})$ is zero.
Here $M$ is an \etale sheaf on $\Spec k(v)$, and so it suffices
to show that the stalk of $M$ at the separable closure $k(v)_s$ is zero.
This stalk is isomorphic to $H^{a+2}(k(v)_s,i^{!}\Z(a)_{O_{\nr}})$,
where $O_{\nr}$ is the maximal unramified extension of $O_v$
(that is, the strict henselization of $O_v$), and we use the same name $i$
for the inclusion $i\colon \Spec k(v)_s\to \Spec O_{\nr}$
\cite[Theorem 03Q9]{Stacks}. Let $j\colon\Spec F_{\nr}
\to \Spec O_{\nr}$ be the inclusion of the generic point.
There is an exact triangle
$$i_*i^{!}E\to E\to j_*j^*E$$
for every object $E$ in $D_{\et}(O_{\nr})$. Applying this
to $E=\Z(a)_{O_{\nr}}$, we have a long exact sequence
$$\cdots \to H^{j-1}_{\et}(F_{\nr},\Z(a))\to H^j_{\et}(k(v)_s,i^{!}\Z(a))
\to H^j_{\et}(O_{\nr},\Z(a))\to H^j_{\et}(F_{\nr},\Z(a))\to\cdots.$$

Consider the map from Zariski to \etale cohomology:
\small
$$\xymatrix@R-10pt@C-20pt{
H^{j-1}_{\Zar}(O_{\nr},\Z(a))\ar[r]\ar[d] &
H^{j-1}_{\Zar}(F_{\nr},\Z(a))\ar[r]\ar[d] &
H^{j}_{\Zar}(k(v)_s,i^!\Z(a))\ar[r]\ar[d] &
H^{j}_{\Zar}(O_{\nr},\Z(a))\ar[r]\ar[d] &
H^{j}_{\Zar}(F_{\nr},\Z(a))\ar[d] \\
H^{j-1}_{\et}(O_{\nr},\Z(a))\ar[r] &
H^{j-1}_{\et}(F_{\nr},\Z(a))\ar[r] &
H^{j}_{\et}(k(v)_s,i^!\Z(a))\ar[r] &
H^{j}_{\et}(O_{\nr},\Z(a))\ar[r] &
H^{j}_{\et}(F_{\nr},\Z(a))
}$$
\normalsize
Voevodsky proved the Beilinson-Lichtenbaum conjecture for smooth
schemes over a field, and Geisser deduced it for smooth schemes
over a discrete valuation ring (in particular, for a DVR itself)
\cite[Theorem 6.18]{VoevodskyBK},
\cite[Theorem 1.2(2)]{Geisser}. Thus, for both $O_{\nr}$ and
$F_{\nr}$, the map from Zariski to \etale cohomology is an isomorphism
for $j\leq a+1$ and injective for $j=a+2$. By the commutative diagram
above, it follows that $H^j_{\Zar}(k(v)_s,i^!\Z(a))\to H^j_{\et}
(Y,i^!\Z(a))$ is an isomorphism for $j\leq a+1$. But, by localization
in Zariski motivic cohomology, the first group is
$\cong H^j_{\Zar}(k(v)_s,\Z(a-1)[-2])\cong H^j_{\et}(k(v)_s,\Z(a-1)[-2])$
for $j\leq a+1$, using the Beilinson-Lichtenbaum conjecture
again. Thus, in $D_{\et}(k(v)_s)$, the map $\h^j(\Z(a-1)[-2])\to
\h^j(i^!\Z(a))$ is an isomorphism for $j\leq a+1$.

Next, consider the diagram above for $j=a+1$. In this case,
we have the extra information that $H^{a+2}_{\Zar}(O_{\nr},
\Z(a))=H^{a+2}_{\et}(O_{\nr},\Z(a))=0$,
because $O_{\nr}$ is strictly henselian and $\Z(a)$ is concentrated
in degrees at most $a$. Then the diagram above implies
that $H^{a+2}_{\Zar}(k(v)_s,i^!\Z(a))\to H^{a+2}_{\et}
(k(v)_s,i^!\Z(a))$ is an isomorphism. The first group is
$\cong H^{a+2}_{\Zar}(k(v)_s,\Z(a-1)[-2])=0$,
using that $\Z(a-1)[-2]$
is concentrated in degrees at most $a+1$. This completes
the proof that the map
$$\Z(a-1)[-2]\to \tau_{\leq a+2}i^!\Z(a)$$
in $D_{\et}(k(v))$ is an isomorphism.
\end{proof}

\begin{lemma}
\label{truncation}
Let $O_v$ be a discrete valuation ring,
and let $E$ be a locally constant \etale sheaf on $\Spec O_v$.
For each $a\geq 1$,
$$E(a-1)_{k(v)}[-2]
\cong \tau_{\leq a+1} i^{!}E(a)$$
in $D_{\et}(k(v))$.
\end{lemma}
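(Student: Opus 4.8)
The plan is to deduce the statement from Lemma \ref{zero} (the case $E=\Z$) by tensoring and a projection formula, after reducing to finitely generated stalks. Write $\varepsilon\colon \Z(a-1)_{k(v)}[-2]\to i^!\Z(a)_{O_v}$ for the canonical map of Lemma \ref{zero}, which identifies its source with $\tau_{\leq a+2}i^!\Z(a)$. Tensoring $\varepsilon$ with $i^*E$ and composing with the canonical projection map $\pi\colon i^*E\otimes^L_{\Z}i^!\Z(a)\to i^!(E\otimes^L_{\Z}\Z(a))$ gives a natural map
$$E(a-1)_{k(v)}[-2] \;=\; i^*E\otimes^L_{\Z}\Z(a-1)_{k(v)}[-2] \;\xrightarrow{\;\mathrm{id}\otimes\varepsilon\;}\; i^*E\otimes^L_{\Z}i^!\Z(a) \;\xrightarrow{\;\pi\;}\; i^!E(a),$$
and the goal is to show it is an isomorphism onto $\tau_{\leq a+1}i^!E(a)$. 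All the functors involved commute with filtered colimits of \'etale sheaves on the Noetherian scheme $\Spec O_v$; for $i^!$ this follows from the localization triangle $i_*i^!\to\mathrm{id}\to Rj_*j^*$ (with $j$ the inclusion of the generic point), using that \'etale cohomology of $\mathrm{Frac}(O_v)$ commutes with filtered colimits and that $i_*$ is a fully faithful, colimit-preserving functor. Since every locally constant sheaf is a filtered colimit of naively locally constant ones with finitely generated stalks (the monodromy factors through a finite group, so one passes to finitely generated submodules), I may therefore assume $E$ is naively locally constant with finitely generated stalks.

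In that case $E$ is a dualizable object of $D_{\et}(\Spec O_v)$: \'etale-locally it is the constant sheaf on a finitely generated abelian group $A$, which is a perfect complex of $\Z$-modules — and hence dualizable — independently of the characteristic of $k$, and dualizability is local for the \'etale topology. For a dualizable object $N$, the projection map $\pi\colon i^*N\otimes^L i^!M\to i^!(N\otimes^L M)$ is an isomorphism (the standard compatibility of $i^!$ with tensoring by a dualizable object). Hence the map $\pi$ above is an isomorphism, and the comparison map is identified with $\mathrm{id}\otimes\varepsilon$.

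It remains to track truncations. By Lemma \ref{zero}, $\varepsilon$ factors as an isomorphism $\Z(a-1)_{k(v)}[-2]\xrightarrow{\sim}\tau_{\leq a+2}i^!\Z(a)$ followed by the inclusion, so its cofiber is $\tau_{\geq a+3}i^!\Z(a)$, concentrated in degrees $\geq a+3$. Tensoring a complex with an \'etale sheaf of abelian groups shifts it down by at most one degree and never up, since $\Z$ has global dimension one; thus $i^*E\otimes^L(\tau_{\geq a+3}i^!\Z(a))$ is concentrated in degrees $\geq a+2$, and therefore $\mathrm{id}\otimes\varepsilon$ induces an isomorphism after applying $\tau_{\leq a+1}$. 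Finally $\Z(a-1)_{k(v)}$ is concentrated in degrees $\leq a-1$ (as recalled in the proof of Lemma \ref{zero}), so $i^*E\otimes^L\Z(a-1)_{k(v)}$ is too, and $E(a-1)_{k(v)}[-2]$ is concentrated in degrees $\leq a+1$, hence equal to its own $\tau_{\leq a+1}$. Combining these facts yields $E(a-1)_{k(v)}[-2]\cong\tau_{\leq a+1}i^!E(a)$.

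The one delicate point is the last paragraph: the truncation level must drop from the $a+2$ of Lemma \ref{zero} to $a+1$, and this is forced precisely by the possible $\Tor_1$ of $i^*E$ against $\h^{a+3}(i^!\Z(a))$, which Lemma \ref{zero} does not control and which can contribute in degree $a+2$. So the connectivity bookkeeping must be done on the nose; everything else — the projection formula (which needs no hypothesis on the characteristic, since a finitely generated abelian group is already perfect over $\Z$) and the passage to filtered colimits — is formal.
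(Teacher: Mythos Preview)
Your proof is correct and follows essentially the same route as the paper: reduce to Lemma~\ref{zero}, identify $i^!E(a)$ with $i^*E\otimes_{\Z}^L i^!\Z(a)$, and use that $\Z$ has global dimension one to control truncations. The paper is terser---it simply asserts the projection-formula identification $E\otimes_{\Z}^L i^!\Z(a)\cong i^!E(a)$ and then invokes the universal coefficient exact sequence on stalks---whereas you spell out the dualizability reduction and the passage to filtered colimits, but the content is the same.
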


\begin{proof}
Let $M=i^!\Z(a)$ in $D_{\et}(Y)$. 
Note that $E\otimes_{\Z}^L M$ is isomorphic to $i^!E(a)$,
and that $E(a-1)_{k(v)}[-2]$ is concentrated in degrees
at most $a+1$.
The result
follows from Lemma \ref{zero}, by the universal coefficient
theorem applied to stalks at any geometric point:
$$0\to E\otimes_{\Z} \h^{j}(M)
\to \h^{j}(E\otimes_{\Z}^L M))\to \Tor_1^{\Z}(E,
\h^{j+1}(M))\to 0.$$
\end{proof}

\begin{corollary}
\label{residue}
Let $O_v$ be a discrete valuation ring, $k(v)$ the residue field,
and $F$ the fraction field.
Let $E$ be a locally constant \etale sheaf on $\Spec O_v$.
Then, for each $a\geq 1$, we define a residue homomorphism
$$\partial_v\colon H^a(F,E(a))\to H^{a-1}(k(v),E(a-1)).$$
\end{corollary}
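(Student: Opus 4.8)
The plan is to obtain $\partial_v$ by feeding the étale localization triangle on $\Spec O_v$ into the truncation identification of Lemma~\ref{truncation}, so that all the genuine content sits in Lemmas~\ref{zero} and~\ref{truncation} and the rest is formal. Write $i\colon \Spec k(v)\to \Spec O_v$ for the closed point and $j\colon \Spec F\to \Spec O_v$ for the generic point. The localization triangle for this closed--open decomposition,
$$i_*i^!E(a)\to E(a)\to Rj_*j^*E(a)\xrightarrow{+1},$$
lives in $D_{\et}(\Spec O_v)$. Taking étale hypercohomology over $\Spec O_v$, and using that global sections of $i_*(-)$ compute $H^*_{\et}(k(v),-)$, that global sections of $Rj_*(-)$ compute $H^*_{\et}(F,-)$, and that $j^*E(a)\cong (E|_F)(a)$, one gets a long exact localization sequence whose connecting homomorphism is a map
$$\partial\colon H^a(F,E(a))\to H^{a+1}(k(v),i^!E(a)).$$

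Next I would identify the target group with $H^{a-1}(k(v),E(a-1))$. By Lemma~\ref{truncation}, $\tau_{\leq a+1}i^!E(a)\cong E(a-1)_{k(v)}[-2]$, and since the right-hand side is a shift of $E(a-1)$ we get $H^{a+1}(k(v),\tau_{\leq a+1}i^!E(a))\cong H^{a-1}(k(v),E(a-1))$. The canonical truncation map $\tau_{\leq a+1}i^!E(a)\to i^!E(a)$ has cone $\tau_{\geq a+2}i^!E(a)$, concentrated in cohomological degrees $\geq a+2$; over the field $k(v)$, the hypercohomology spectral sequence $H^p_{\et}(k(v),\h^q(D))\Rightarrow H^{p+q}_{\et}(k(v),D)$, supported in $p\geq 0$, forces $H^n(k(v),\tau_{\geq a+2}i^!E(a))=0$ for every $n\leq a+1$. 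Hence both the degree-$a$ and degree-$(a+1)$ cohomology of the cone vanish, so the long exact sequence of the triangle $\tau_{\leq a+1}i^!E(a)\to i^!E(a)\to \tau_{\geq a+2}i^!E(a)$ shows that the truncation map induces an \emph{isomorphism}
$$H^{a-1}(k(v),E(a-1))\ \cong\ H^{a+1}(k(v),\tau_{\leq a+1}i^!E(a))\ \xrightarrow{\ \sim\ }\ H^{a+1}(k(v),i^!E(a)).$$
I then define $\partial_v\colon H^a(F,E(a))\to H^{a-1}(k(v),E(a-1))$ as $\partial$ followed by the inverse of this isomorphism. Both the localization triangle and the truncation isomorphism are natural in $E$, so $\partial_v$ is too; and when $E$ is torsion of exponent invertible in $k(v)$, using Geisser's stronger statement that the truncation in Lemma~\ref{zero} is then superfluous, one checks that $\partial_v$ recovers the classical residue, so that $H^*[E]$ agrees with Rost's cycle module in that case.

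The substantive input is entirely contained in Lemmas~\ref{zero} and~\ref{truncation}; the remainder is bookkeeping. The point that requires the most care is the truncation level: what is used is that Lemma~\ref{truncation} identifies $i^!E(a)$ with $E(a-1)_{k(v)}[-2]$ through truncation level $a+1$, not merely $a$. It is precisely agreement in cohomological degree $a+1$ that upgrades the natural map $H^{a-1}(k(v),E(a-1))\to H^{a+1}(k(v),i^!E(a))$ from an injection to an isomorphism — with only $\tau_{\leq a}$ one would obtain a possibly non-surjective injection, and hence no residue in the stated form — so the whole construction hinges on that one degree. Beyond keeping the directions of the canonical truncation maps and the indexing shifts straight, I anticipate no real obstacle.
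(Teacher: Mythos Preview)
Your proof is correct and follows essentially the same route as the paper: use the localization triangle to get a connecting map $H^a(F,E(a))\to H^{a+1}(k(v),i^!E(a))$, then invoke Lemma~\ref{truncation} to identify the target with $H^{a-1}(k(v),E(a-1))$. The only difference is that you spell out, via the truncation triangle and the hypercohomology spectral sequence, why $H^{a+1}(k(v),\tau_{\leq a+1}i^!E(a))\to H^{a+1}(k(v),i^!E(a))$ is an isomorphism, whereas the paper simply calls this step ``trivial''; your emphasis that this is exactly why the truncation level $a+1$ (rather than $a$) is needed is a useful clarification.
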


\begin{proof}
By the basic exact triangle for $i^{!}$,
there is a natural map $H^a(F,E(a))\to H^{a+1}(k(v),i^{!}E(a))$.
The latter group is (trivially) isomorphic to $H^{a+1}(k(v),\tau_{\leq a+1}
i^{!}E(a))$. By Lemma \ref{truncation}, that is isomorphic
to $H^{a+1}(k(v),E(a-1)[2])\cong H^{a-1}(k(v),E(a-1))$.
\end{proof}

\section{Purity for \etale motivic cohomology}
\label{puritysection}

We prove here some purity properties of \etale motivic
cohomology. The subtleties occur only for varieties
in characteristic $p>0$. The point is that we only have the localization
sequence in its usual form for \etale motivic cohomology
after inverting $p$
(by Cisinski--D\'eglise). Nonetheless, we prove some purity
results without inverting $p$, building on work of Geisser, Gros,
and Levine
\cite{Geisser, GL, Gros}. We use these results to define the \etale cycle map
for twisted Chow groups on regular schemes, without
inverting $p$ (Theorem \ref{etalecycle}).

\begin{lemma}
\label{lowcodim}
Let $X$ be a regular noetherian scheme of finite type
over a field $k$. Let $i\colon Y\to X$
be the inclusion of a regular subscheme of codimension $r$.
For each $a\geq r$, the canonical morphism
$$\Z(a-r)[-2r]\to \tau_{\leq a+r+1}i^!\Z(a)$$
is an isomorphism in $D_{\et}(Y)$.
\end{lemma}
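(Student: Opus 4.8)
The plan is to reduce to codimension $1$ and then induct on $r$. The assertion is about the cohomology sheaves $\h^m(i^!\Z(a))$ on $Y$ and is local in the \'etale topology on $Y$, and a regular closed immersion of codimension $r$ is \'etale-locally cut out by a regular sequence $f_1,\dots,f_r$; so first I would arrange a chain of regular closed immersions $Y=Y_r\hookrightarrow Y_{r-1}\hookrightarrow\cdots\hookrightarrow Y_0=X$ with each $i_j\colon Y_j\hookrightarrow Y_{j-1}$ of codimension $1$ (take $Y_j=V(f_1,\dots,f_j)$, which is again regular), so that $i^!=i_r^!\cdots i_1^!$. In characteristic $0$ (so $e=1$), Cisinski--D\'eglise's purity already gives $i^!\Z(a)\cong\Z(a-r)[-2r]$ with no truncation, so I may assume $k$ has characteristic $p>0$.

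For the base case $r=1$ I would show that the conclusion of Lemma \ref{zero} holds for any codimension-$1$ regular closed immersion $i\colon Y\hookrightarrow X$ of regular schemes of finite type over $k$, not just for a discrete valuation ring: the canonical (Gysin) morphism $\Z(b-1)_Y[-2]\to\tau_{\leq b+2}i^!\Z(b)$ is an isomorphism for each $b\geq 1$. The proof of Lemma \ref{zero} goes through with only cosmetic changes once $O_{\nr}$ is replaced by the strictly henselian regular local ring $B=\mathcal{O}^{\mathrm{sh}}_{X,\bar y}$ at a geometric point $\bar y$ over a point $y\in Y$ (so that $\Spec B\cap Y=\Spec(B/f)$ for a nonzerodivisor $f$) and $F_{\nr}$ is replaced by $B[1/f]$: one uses the localization triangle $i_*i^!E\to E\to Rj_*j^*E$ on $\Spec B$, the Beilinson--Lichtenbaum comparison of Zariski with \'etale motivic cohomology for the regular schemes $\Spec B$ and $\Spec B[1/f]$ (Voevodsky, Geisser), and the vanishing of $H^m_{\et}(\Spec B,\Z(b))$ for $m>b$ (since $\Spec B$ is strictly henselian and $\Z(b)$ is concentrated in degrees $\leq b$). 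Checking this morphism on all geometric stalks of $Y$ gives the claim; in particular $\h^{b+2}(i^!\Z(b))=0$.

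For the inductive step $r\geq 2$ I would factor $i=j'\circ j''$ with $j''\colon Y\hookrightarrow Z:=Y_1$ of codimension $r-1$ and $j'\colon Z\hookrightarrow X$ of codimension $1$, so that $i^!=(j'')^!(j')^!$. By the base case there is an exact triangle $\Z(a-1)_Z[-2]\to(j')^!\Z(a)\to T$ in which $T$ has cohomology concentrated in degrees $\geq a+3$. Applying $(j'')^!$ and using the inductive hypothesis in weight $a-1$ (which gives $\tau_{\leq a+r-1}(j'')^!\Z(a-1)\cong\Z(a-r)_Y[-2(r-1)]$, a complex in degrees $\leq a+r-2$), one finds $\tau_{\leq a+r+1}((j'')^!(\Z(a-1)_Z[-2]))\cong\Z(a-r)_Y[-2r]$, which lies in degrees $\leq a+r$. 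Then, from the triangle $(j'')^!(\Z(a-1)_Z[-2])\to i^!\Z(a)\to(j'')^!T$ and its long exact sequence of cohomology sheaves, the lemma follows as soon as one knows that $(j'')^!T$ is concentrated in degrees $\geq a+r+2$.

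The hard part is exactly this control of the ``torsion tail'' $(j'')^!T$. Since the prime-to-$p$ part of $(j')^!\Z(a)$ is $\Z[1/p](a-1)_Z[-2]$ by Cisinski--D\'eglise's purity, and this complex is concentrated in degrees $\leq a+1<a+3$, the complex $T$ is $p$-primary torsion. For $p$-torsion coefficients one has, by Geisser--Levine, $\Z/p^m(c)_{\et}\cong W_m\Omega^c_{\log}[-c]$, and Gros's Gysin morphism for logarithmic de Rham--Witt cohomology (extended to regular pairs) shows that $(j'')^!$ raises the cohomological degree of the $p$-part by $\codim(Y\subset Z)=r-1$; hence the cohomology of $(j'')^!T$ sits in degrees $\geq (a+3)+(r-1)=a+r+2$, as required. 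Combining with the (exact) prime-to-$p$ statement then finishes the induction. I expect the main obstacle to be precisely this $p$-torsion bookkeeping in characteristic $p$: it is what requires the results of Geisser, Gros, and Levine, and it is also why only the truncation $\tau_{\leq a+r+1}$, inherited through the shift $[-2(r-1)]$ from the bound $b+2=(b-1)+1+1$ of the codimension-$1$ case, is asserted rather than a full isomorphism.
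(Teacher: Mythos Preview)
Your inductive strategy is natural, and the base case (extending Lemma~\ref{zero} from a DVR to any codimension-$1$ regular immersion) is fine. The gap is in the inductive step, specifically in the sentence ``Gros's Gysin morphism \ldots\ shows that $(j'')^!$ raises the cohomological degree of the $p$-part by $\codim(Y\subset Z)=r-1$; hence the cohomology of $(j'')^!T$ sits in degrees $\geq a+r+2$.'' Gros's purity theorem is a statement about the specific sheaves $\Omega^c_{\log}$ (equivalently $\F_p(c)$): it says $\h^j((j'')^!\Omega^c_{\log})=0$ for $j<r-1$. It does \emph{not} say that $(j'')^!$ raises degrees by $r-1$ on arbitrary $p$-primary torsion \'etale sheaves. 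Indeed that is false: if $F=(j'')_*G$ for any sheaf $G$ on $Y$, then $(j'')^!F\cong G$ sits in degree~$0$. The cohomology sheaves $\h^m(T)$ of your tail $T$ are certain unknown $p$-primary torsion sheaves on $Z$ (subquotients of $\h^{>1}((j')^!\Omega^a_{X,\log})$, which Gros does not compute), and you have given no reason why $(j'')^!$ should shift them up by $r-1$.

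The paper avoids this problem by not inducting on $r$ at all. After reducing to $X$ smooth over a perfect field (a step you also need, since Gros and Geisser--Levine work in that setting), it takes the Gysin morphism $\varphi\colon \Z(a-r)[-2r]\to i^!\Z(a)$ for the full codimension-$r$ immersion, lets $C$ be its cofiber, observes via Cisinski--D\'eglise that $C$ is $p$-primary torsion, and then applies Gros's codimension-$r$ purity \emph{directly} to $i^!\F_p(a)=i^!\Omega^a_{\log}[-a]$ to see that $C/p$ is concentrated in degrees $>a+r$. A standard $p$-divisibility argument then kills $\h^j(C)$ for $j\leq a+r+1$. The point is that Gros already proves purity in every codimension for $\Omega^{\bullet}_{\log}$, so one never needs to understand $(j'')^!$ on mystery sheaves.
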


\begin{proof}
We can reduce to the case where the field $k$ is perfect. Indeed,
if $k$ has characteristic zero, then $k$ is already perfect. If $k$ has
characteristic $p>0$, then
$X$ and $Y$ can be defined over some finitely generated field $k$ over
$\F_p$. We can view $k$ as the function field of a variety $B$
over $\F_p$. After shrinking $B$, $X$ is the generic fiber
of a regular scheme $U$ of finite type over $B$, and likewise
$Y$ is the generic fiber of a regular subscheme $V$ of codimension $r$
in $U$. Then $U$ and $V$ are smooth over the perfect field $\F_p$,
and it suffices to prove the lemma for $V$ inside $U$.

So we can assume that $X$ and $Y$ are smooth over a perfect field $k$.
In the Zariski topology, we have $\Z(a-r)[-2r]\cong i^!\Z(a)$
in $D_{\Zar}(Y)$;
that is a reformulation of the localization sequence for motivic
cohomology. This determines a morphism $\varphi\colon \Z(a-r)[-2r]\to
i^!\Z(a)$ in $D_{\et}(Y)$. The object $\Z(a-r)$ in $D_{\et}(Y)$ is concentrated
in degrees at most $a-r$, and so $\Z(a-r)[-2r]$ is concentrated
in degrees at most $a+r$.

By Cisinski and D\'eglise, $\varphi$ becomes
an isomorphism after inverting the exponential characteristic of $k$.
That completes the proof for $k$ of characteristic zero. So we now
assume that $k$ has characteristic $p>0$. Let $C$ be the cofiber
of $\varphi$. Then we know that
$C[1/p]=0$; that is, $\h^j(C)$ is $p$-power
torsion for each integer $j$.

Tensoring $\varphi$ over $\Z$ with $\F_p$
gives a morphism $\F_p(a-r)[-2r]\to i^!\F_p(a)$ in $D_{\et}(Y)$.
The object $\F_p(a)$ in $D_{\et}(X)$ is concentrated
in degree $p$; namely, by Geisser--Levine, it is isomorphic in $D_{\et}(X)$
to $\Omega^a_{X,\log}[-a]$, where $\Omega^a_{X,\log}$ is the subsheaf
of $\Omega^a_X$ generated locally by logarithmic differentials
$df_1/f_1\wedge\cdots df_a/f_a$ for units $f_1,\ldots,f_a$ \cite{GL}.
(This is a sheaf of $\F_p$-vector spaces, not an $O_X$-module.)

In thsee terms, using that $k$ is perfect,
Gros showed that the morphism $\F_p(a-r)[-2r]
\to \tau_{\leq a+r}i^!\F_p(a)$ in $D_{\et}(Y)$ is an isomorphism
\cite[eq.~II.3.5.3, Th.~II.3.5.8]{Gros}. (In his notation,
this is the statement
that $\underline{H}^j_Y(X,\Omega^a_{X,\log})$ is zero for $j<r$
and isomorphic to $\Omega^{a-r}_{Y,\log}$ for $j=r$.) By the octahedral
axiom for triangulated categories, we have an exact triangle
$\F_p(a-r)[-2r]\to i^!\F_p(a)\to C/p$ in $D_{\et}(Y)$. It follows that
$\h^j(C/p)=0$ for $j\leq a+r$, where in the case $j=a+r$
we use that $\h^{a+r+1}(\F_p(a-r)[-2r])=0$. By the exact triangle
$C\xrightarrow[p]{} C\to C/p$, it follows that multiplication by $p$
is an isomorphism on $\h^j(C)$ for $j\leq a+r$ and is injective
on $\h^{a+r+1}(C)$. But $\h^j(C)$ is $p$-power torsion for all $j$.
So $\h^j(C)=0$ for $j\leq a+r+1$. By definition of $C$,
it follows that $\h^j(\Z(a-r)[-2r])\to \h^j(i^!\Z(a))$
is an isomorphism for $j\leq a+r+1$. We conclude that
$$\Z(a-r)[-2r]\to \tau_{\leq a+r+1}i^!\Z(a)$$
is an isomorphism in $D_{\et}(Y)$.
\end{proof}

\begin{lemma}
\label{highcodim}
Let $X$ be a regular noetherian scheme of finite type
over a field $k$. Let $i\colon Y\to X$
be the inclusion of a closed subset of codimension 
at least $r$ everywhere.
For each $a<r$,
$$\tau_{\leq 2a+2}i^!\Z(a)=0$$
in $D_{\et}(Y)$.
\end{lemma}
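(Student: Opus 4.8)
The plan is to imitate the proof of Lemma~\ref{lowcodim} in the complementary range of twists, reducing the general (possibly singular) $Y$ to the case where $Y$ is smooth over $k$ by an induction on $\dim Y$. As in Lemma~\ref{lowcodim}, I would first spread $X$ and $Y$ out over a variety over $\F_p$ to reduce to the case that $k$ is perfect; from then on ``regular subscheme of $X$'' reads as ``smooth subscheme over $k$'', and replacing $Y$ by $Y_{\mathrm{red}}$ changes neither the hypothesis nor $i^!\Z(a)$. (In characteristic $0$ the argument below collapses to a direct application of Cisinski--D\'eglise, so the point is characteristic $p>0$.)

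For the inductive step I would assume the lemma for closed subsets of dimension $<\dim Y$ (and all admissible $r$, $a$). Let $U\subseteq Y$ be the smooth locus, a dense open subset, with closed complement $Y'=Y\setminus U$; write $z\colon Y'\hookrightarrow Y$, $u\colon U\hookrightarrow Y$, $i_{Y'}=iz$, $i_U=iu$. Since $Y'$ lies in the union of the singular loci of the irreducible components of $Y_{\mathrm{red}}$ and of the pairwise intersections of distinct components, each of codimension $\geq r+1$ in $X$, we have $\codim(Y',X)\geq r+1$ and $\dim Y'<\dim Y$. Applying the localization triangle $z_*z^!\to\mathrm{id}\to Ru_*u^*$ to the object $i^!\Z(a)$ in $D_{\et}(Y)$, and using $z^!i^!=i_{Y'}^!$ together with $u^*i^!=i_U^!$, gives an exact triangle
$$z_*\,i_{Y'}^!\Z(a)\;\to\; i^!\Z(a)\;\to\; Ru_*\,i_U^!\Z(a).$$
Its first term has $\tau_{\leq 2a+2}=0$ by the inductive hypothesis applied to $Y'$ (admissible since $a<r<r+1$), and since $Ru_*$ is left $t$-exact the third term has $\tau_{\leq 2a+2}=0$ as soon as $i_U^!\Z(a)$ does. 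So it remains to treat the smooth case: for a smooth closed immersion $\iota\colon U\to W$ of smooth $k$-schemes of finite type, of constant codimension $c>a$ (take $W=X\setminus Y'$ and work one connected component of $U$ at a time), one must show $\tau_{\leq 2a+2}\iota^!\Z(a)=0$.

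Here I would run the argument of Lemma~\ref{lowcodim}. Let $\varphi\colon\Z(a-c)[-2c]\to\iota^!\Z(a)$ be the Gysin morphism in $D_{\et}(U)$ and $C$ its cofiber. For $a<c$ the source $\Z(a-c)$ is a prime-to-$p$ torsion complex concentrated in cohomological degree $1$ (Section~\ref{definition}), so $\Z(a-c)[-2c]$ sits in degree $2c+1\geq 2a+3$, whence $\mathcal H^j(\iota^!\Z(a))\cong\mathcal H^j(C)$ for $j\leq 2a+2$; and by Cisinski--D\'eglise $\varphi$ becomes an isomorphism after inverting $p$, exactly as in Lemma~\ref{lowcodim}, so every $\mathcal H^j(C)$ is $p$-power torsion. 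On the other hand $\Z(a-c)[-2c]$ is prime-to-$p$ torsion, hence $\Z(a-c)[-2c]\otimes^L_{\Z}\F_p=0$, so $C\otimes^L_{\Z}\F_p\cong\iota^!\Z(a)\otimes^L_{\Z}\F_p\cong\iota^!\F_p(a)$ (using that $\iota^!$ is triangulated). By Geisser--Levine, $\F_p(a)_W\cong\Omega^a_{W,\log}[-a]$, and Gros's purity, recalled in the proof of Lemma~\ref{lowcodim}, specializes (using $\F_p(m)=0$ for $m<0$) to $\tau_{\leq a+c}\iota^!\F_p(a)=0$; since $c\geq a+1$ this forces $\mathcal H^j(C\otimes^L_{\Z}\F_p)=0$ for $j\leq 2a+1$. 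Feeding this into the triangle $C\xrightarrow{\,p\,}C\to C\otimes^L_{\Z}\F_p$ shows $p$ acts injectively on $\mathcal H^j(C)$ for $j\leq 2a+2$; being $p$-power torsion as well, $\mathcal H^j(C)=0$ for $j\leq 2a+2$, so $\tau_{\leq 2a+2}\iota^!\Z(a)=0$, completing the induction.

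The main obstacle is the smooth case, and within it the bookkeeping near $j=2a+2$: when $c=a+1$ the part of $\iota^!\Z(a)$ seen after inverting $p$ sits in degree exactly $2a+3$ and the mod-$p$ part in degrees $\geq 2a+2$, so both inputs clear the target bound by the smallest possible margin, and one must use the full strength of Gros's statement --- the vanishing of the top residue $\Omega^{a-c}_{W,\log}$ for $a<c$, not merely $\underline H^{\,j}=0$ for $j<c$. A minor secondary point is to record that the localization triangle and the identities $z^!i^!=i_{Y'}^!$, $u^*i^!=i_U^!$ are available integrally for $i^!$ on finite-type $k$-schemes, which they are in the setting of this appendix.
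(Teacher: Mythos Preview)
Your argument is correct in substance and follows the same strategy as the paper: reduce to $k$ perfect and $Y$ smooth, then combine Cisinski--D\'eglise purity (for the prime-to-$p$ part) with Gros's vanishing (for the mod-$p$ part). Your stratification step, spelled out via the localization triangle $z_*i_{Y'}^!\to i^!\to Ru_*i_U^!$, is a more explicit version of what the paper compresses into one sentence (``stratify $Y$ into pieces that are smooth over $k$; by induction, reduce to $Y$ smooth'').

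The one place where you diverge from the paper is the smooth case: you route the argument through a Gysin map $\varphi\colon\Z(a-c)[-2c]\to\iota^!\Z(a)$ and its cofiber $C$. This detour is unnecessary and introduces a technical imprecision, because for $a-c<0$ the integral object $\Z(a-c)$ in $D_{\et}$ over a characteristic-$p$ base is not defined in the paper (only $\Z[1/e](a-c)$ is, in section~\ref{definition}), and no integral Gysin map is constructed in this range---so the claims ``$\Z(a-c)$ is prime-to-$p$ torsion'' and ``$C\otimes^L\F_p\cong\iota^!\F_p(a)$'' rest on an object that has not been specified. The paper sidesteps this entirely by working directly with $\iota^!\Z(a)$: Cisinski--D\'eglise gives $\iota^!\Z(a)[1/p]\cong\bigoplus_{l\neq p}\Q_l/\Z_l(a-r)[-1-2r]$, concentrated in degree $2r+1>2a+2$, so $\h^j(\iota^!\Z(a))$ is $p$-power torsion for $j\le 2a+2$; Gros gives $\tau_{\le a+r}\iota^!\F_p(a)=0$, so the triangle $\iota^!\Z(a)\xrightarrow{p}\iota^!\Z(a)\to\iota^!\F_p(a)$ makes $p$ injective on $\h^j(\iota^!\Z(a))$ for $j\le a+r+1$; since $a<r$ forces $2a+2\le a+r+1$, one concludes. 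Dropping $\varphi$ and $C$ in favor of this direct argument makes your proof both shorter and rigorous without altering its content.
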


\begin{proof}
As in the proof of Lemma \ref{lowcodim}, we can reduce
to the case where $X$ is smooth over a perfect
field $k$. In this case, we can stratify $Y$ into pieces that are smooth
over $k$. By induction, it suffices to consider the case
where $Y$ is a smooth subvariety of codimension at least $r$ in $X$.

By the localization sequence for \etale motivic cohomology,
with the exponential characteristic $e$ inverted,
for $a<r$, we have $Ri^!\Z(a)[1/e]\cong
\oplus_{l\leq e}\Q_l/\Z_l(a-r)[-1-2r]$. It follows
that $\tau_{\leq 2a+2}i^!\Z(a)$ becomes zero after inverting $e$.
That completes the proof for $k$ of characteristic zero.
So we can assume that $k$ has characteristic $p>0$.
In this case,
we have shown that $\h^j(i^!\Z(a))$ is $p$-power torsion for each
$j\leq 2a+2$.

We use again Geisser--Levine's result that
$\F_p(a)$ is isomorphic to $\Omega^a_{\log}[-a]$ in $D_{\et}(X)$.
Using that $a<r$, Gros showed (using that $a<r$)
that $\tau_{\leq a+r}i^!\F_p(a)=0$ \cite[eq.~II.3.5.3, Th.~II.3.5.8]{Gros}.
(In his terms, when $a<r$, he showed that $\underline{H}^j_Y(X,
\Omega^a_{\log})=0$
for $j\leq r$.) By the exact triangle $\Z(a)\xrightarrow[p]{} \Z(a)
\to \F_p(a)$ in $D_{\et}(X)$,  it follows that the $p$-torsion
subgroup of $\h^j(i^!\Z(a))$ is zero for $j\leq a+r+1$. We have shown
that this group is $p$-power torsion, and so in fact
$\h^j(i^!\Z(a))$ is zero for $j\leq a+r+1$. That is,
$\tau_{\leq a+r+1}i^!\Z(a)=0$ in $D_{\et}(Y)$. Since $a<r$,
it follows that $\tau_{\leq 2a+2}i^!\Z(a)=0$.
\end{proof}

\begin{corollary}
\label{Ecodim}
Let $X$ be a regular scheme of finite type
over a field $k$. Let $E$ be a locally constant
\etale sheaf on $X$. 

(1) Let $i\colon Y\to X$
be the inclusion of a regular subscheme of codimension $r$.
For each $a\geq r$, the canonical morphism
$$E(a-r)[-2r]\to \tau_{\leq a+r}i^!E(a)$$
is an isomorphism in $D_{\et}(Y)$.

(2) Let $i\colon Y\to X$
be the inclusion of a closed subset of codimension 
at least $r$ everywhere.
For each $a<r$,
$$\tau_{\leq 2a+1}i^!E(a)=0$$
in $D_{\et}(Y)$.
\end{corollary}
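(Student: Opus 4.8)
The plan is to deduce both parts directly from the integral purity statements already proved, Lemmas~\ref{lowcodim} and~\ref{highcodim}, by the same universal coefficient argument used in the proof of Lemma~\ref{truncation} (which is exactly the codimension-one case of part~(1)). First I would set $M=i^!\Z(a)$ in $D_{\et}(Y)$ and record the identification $i^!E(a)\cong E\otimes_{\Z}^{L}M$. This assertion is \'etale-local on $X$, so one reduces to the case where $E$ is the constant sheaf on an abelian group, and there it is the projection formula for the closed immersion $i$, just as in Lemma~\ref{truncation}. Because $\Z$ has global dimension~$1$, taking stalks at a geometric point $\bar y$ of $Y$ then yields, for every $j$, a short exact sequence
$$0\to E_{\bar y}\otimes_{\Z}\h^{j}(M)_{\bar y}\to \h^{j}(E\otimes_{\Z}^{L}M)_{\bar y}\to \Tor_1^{\Z}(E_{\bar y},\h^{j+1}(M)_{\bar y})\to 0,$$
natural in $M$.

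For part~(1): by Lemma~\ref{lowcodim}, the canonical morphism $\Z(a-r)[-2r]\to M$ is an isomorphism on $\h^{j}$ for $j\leq a+r+1$, and $\Z(a-r)[-2r]$ sits in degrees $\leq a+r$. Tensoring this morphism with $E$ gives the canonical map $E(a-r)[-2r]\to i^!E(a)$. For $j\leq a+r$, both $\h^{j}(M)_{\bar y}$ and $\h^{j+1}(M)_{\bar y}$ coincide with the corresponding groups of $\Z(a-r)[-2r]$ --- the latter needs $j+1\leq a+r+1$ --- so by naturality of the sequence above, $\h^{j}(E(a-r)[-2r])\to \h^{j}(i^!E(a))$ is an isomorphism for $j\leq a+r$. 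Since $E(a-r)[-2r]$ is again concentrated in degrees $\leq a+r$, this is precisely the statement that $E(a-r)[-2r]\to \tau_{\leq a+r}i^!E(a)$ is an isomorphism.

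For part~(2): by Lemma~\ref{highcodim}, when $a<r$ we have $\h^{j}(M)=0$ for all $j\leq 2a+2$. Plugging this into the exact sequence above, $\h^{j}(E\otimes_{\Z}^{L}M)_{\bar y}=0$ whenever $j\leq 2a+1$, since then both $\h^{j}(M)_{\bar y}=0$ and $\h^{j+1}(M)_{\bar y}=0$ (the latter using $j+1\leq 2a+2$). Hence $\tau_{\leq 2a+1}i^!E(a)=0$ in $D_{\et}(Y)$.

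The one place that demands care is the accounting of truncation degrees: passing from $\Z$- to $E$-coefficients costs one cohomological degree because of the $\Tor_1$ term involving $\h^{j+1}$, which is exactly why the bounds drop from $a+r+1$ to $a+r$ in~(1) and from $2a+2$ to $2a+1$ in~(2). The identification $i^!E(a)\cong E\otimes_{\Z}^{L}i^!\Z(a)$ for a possibly non-torsion locally constant sheaf $E$ is the other mild subtlety, handled by the \'etale-local reduction indicated above; beyond these, the corollary is a purely formal consequence of the two preceding lemmas.
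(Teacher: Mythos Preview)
Your proposal is correct and follows essentially the same approach as the paper: reduce to the integral statements of Lemmas~\ref{lowcodim} and~\ref{highcodim} via the identification $i^!E(a)\cong E\otimes_{\Z}^{L}i^!\Z(a)$ and the universal coefficient sequence on stalks, with the $\Tor_1$ term in degree $j+1$ accounting for the one-degree drop in the truncation bounds. Your write-up is slightly more explicit about the degree bookkeeping and the \'etale-local justification of the projection formula, but the argument is the same.
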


\begin{proof}
By definition, $E(a)=E\otimes_{\Z}^L \Z(a)$ in $D_{\et}(X)$.
Since $\Z(a-r)[-2r]$ in $D_{\et}(Y)$ is concentrated in degrees
at most $(a-r)+2r=a+r$, so is $E(a-r)[-2r]$. Given that, (1) and (2)
follow from Lemmas \ref{lowcodim} and \ref{highcodim}, together
with the universal coefficient theorem (applied to the stalks
at any geometric point):
$$0\to E\otimes_{\Z} \h^j(i^!\Z(a))\to \h^j(i^!E(a))
\to \Tor_1^{\Z}(E,\h^{j+1}(i^!\Z(a)))\to 0.$$
Here we have used that $E\otimes_{\Z}^L i^!\Z(a)\cong i^!E(a)$.
\end{proof}

\section{Twisted motivic cohomology and twisted Chow groups:
conjectures}

In this section, we propose a general notion of ``twist''
which should make it possible to define twisted motivic cohomology
and twisted Chow groups; the two theories do not always agree.
Namely, it should be possible to twist by
any birational sheaf with transfers $E$
in the sense of Kahn--Sujatha \cite[Definition 2.3.1]{KS}.
Some cases have been worked out,
including the notion of twisting by an Azumaya algebra
\cite{KL, ENY}, as well as by a locally constant \etale sheaf.
This paper has focused on the latter case.

This section is not logically necessary for the rest of the paper.
In particular, in this section we assume that the exponential
characteristic $e$ of the base field $k$ acts invertibly on $E$,
in order to use the good properties of categories of motives
with $e$ inverted. (By definition, $e=1$
if $k$ has characteristic zero, and $e=p$ if $k$ has characteristic
$p>0$.) I hope that inverting $e$ can be avoided.
When $E$ is a locally constant
\'etale sheaf (the main focus of this paper),
section \ref{definition} defines twisted Chow groups without inverting $e$.

Let $X$ be a noetherian scheme of finite dimension.
Building on Voevodsky's ideas,
Cisinski and D\'eglise defined the derived category of motives
$DM(X)$ \cite[Definition 11.1.1]{CDbook}. The definition is based
on an abelian category $\Sh^{\tr}(X,\Z)$, the category
of Nisnevich sheaves with transfers
\cite[Definition 10.4.2]{CDbook}. These are Nisnevich
sheaves of abelian groups
on the category of smooth separated schemes of finite type
over $X$, with transfers for finite correspondences in a precise sense.

For a presheaf with transfers $E$ over $X$, we define
the {\it contraction } $E_{-1}$ (following Voevodsky) by
$$E_{-1}(Y):=\coker(E(Y\times A^1)\to E(Y\times G_m))$$
for $Y$ smooth over $X$. This is also a presheaf with transfers
\cite[Lecture 23]{MVW}, \cite[Definition 2.4.1]{KS}. (These references
assume that $X=\Spec k$ for a field $k$, but the same argument
applies.) If $E$ is a homotopy invariant sheaf with transfers,
then so is $E_{-1}$.

Define a homotopy invariant sheaf with transfers $E$
over $X$ to be {\it birational }if
$E_{-1}=0$. The name is motivated by Kahn--Sujatha's result that
for a perfect field $k$, a homotopy invariant
sheaf with transfers $E$ over $k$ has $E_{-1}=0$ if and only if
$E(Y)\xrightarrow[]{\cong} E(U)$ for every dense open subset $U$
of a smooth $k$-scheme $Y$
\cite[Proposition 2.5.2]{KS}. It is clear over any base scheme $X$
that a homotopy invariant sheaf with transfers that has the latter property
has $E_{-1}=0$, hence is ``birational'' in our sense.

Write $HI(X)$ for the full subcategory of homotopy invariant
Nisnevich sheaves with transfers.
By construction, there is a fully faithful functor
$HI(X)\to DM(X)$. As a result, we get a definition of motivic
cohomology twisted by an object $E$ in $HI(X)$:
$$H^i_{\M}(X,E(j)):=\Hom_{DM(X)}(1_X,E(j)[i]),$$
for integers $i$ and $j$. In particular, we have this definition
for $E$ a birational sheaf with transfers.

On the other hand, we can also define twisted Chow groups. One could define
these to be equal to $H^{2i}_{\M}(X,E(i))$; but we consider
a different notion, inspired by Rost's ideas, which mixes the \etale
and Zariski topologies. Then it becomes an interesting question
to compare twisted motivic cohomology and twisted Chow groups;
see the examples below.

The idea is that \etale sheafification gives a functor
from $HI(X)$ to $HI_{\et}(X)$, the category of homotopy invariant
\etale sheaves with transfers over $X$. (For $X=\Spec k$,
this is \cite[Theorem 6.17]{MVW}.) Moreover, $(E_{\et})_{-1}
\cong (E_{-1})_{\et}$, and so this functor takes birational Nisnevich
sheaves with transfer to birational \etale sheaves with transfer.
There is a tensor product on the abelian category
of \etale sheaves with transfer, and hence a derived tensor product
on the derived category of \etale sheaves with transfer,
written $\otimes^{\tr}_{L,\et}$, or $\otimes^{\tr}$ for short.
(These tensor products are part of the structure of ``premotivic
category'' constructed
in \cite[Corollary 2.1.12 and section 2.2.4]{CDetale}.)

Suppose that $X$ is a separated scheme of finite type over a field $k$.
In this case, Rost defined the abelian category of cycle modules
over $X$ \cite{Rost}. 

\begin{conjecture}
\label{cyclemodule}
Let $E$ be a birational Nisnevich sheaf with transfers over $X$.
For every field $F$ over $X$, define
$$H^*[E](F)=\oplus_{j\geq 0}H^j_{\et}(F,E(j)).$$
Then this is a cycle module over $X$.
\end{conjecture}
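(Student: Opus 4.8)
The plan is to verify that $H^*[E]$ satisfies Rost's axioms: those of a cycle premodule (the relations R1--R3) and then the two geometric axioms, FD (finite support of divisors) and C (the residue complex is a complex) \cite[definitions 1.2 and 2.1]{Rost}. Throughout one uses that $e$ is invertible on $E$, so that one may work inside Cisinski--D\'eglise's category of \etale motives over $X$ and its six-functor formalism. First I would replace $E$ by its \etale sheafification, which is again a birational sheaf with transfers since $(E_{\et})_{-1}\cong (E_{-1})_{\et}$, and I would record that for $j<0$ the object $E(j)=E\otimes^{\tr}_{L,\et}\Z[1/e](j)$ is torsion and concentrated in positive cohomological degrees, so that $H^*[E]$ is supported in non-negative degrees, exactly as in Rost's torsion case.

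Next come the four operations. Pullback $\varphi_*$ for an inclusion of fields is restriction in \etale cohomology, and the transfer $\varphi^*$ for a finite extension $F_1\subset F_2$ is the trace map arising from the transfer structure that $E(j)$ already carries as an \etale sheaf with transfers (equivalently, from the counit $f_!f^!\to\mathrm{id}$ together with purity $f^!\simeq f^*$ for the finite morphism $f$). The $K^M_*F$-module structure is the composite $K^M_nF\to H^n_{\M}(F,\Z(n))\to H^n_{\et}(F,\Z(n))$ followed by cup product through the pairing $\Z(n)\otimes^{\tr}E(m)\to E(n+m)$. For a valuation $v$ over $X$, with $i\colon\Spec k(v)\to\Spec O_v$ and $j\colon\Spec F\to\Spec O_v$, the residue
\[
\partial_v\colon H^a(F,E(a))\to H^{a-1}(k(v),E(a-1))\qquad(a\geq 1)
\]
is the connecting homomorphism of the localization triangle $i_*i^!E(a)\to E(a)\to Rj_*j^*E(a)$, once one identifies $i^!E(a)$ with $E(a-1)_{k(v)}[-2]$ in $D_{\et}(k(v))$; on the degree-$0$ summand $H^0(F,E)$ one sets $\partial_v=0$, which is coherent because the same triangle together with the vanishing of $\Z[1/e](-1)$ in non-positive degrees forces $H^0(O_v,E)\xrightarrow{\sim}H^0(F,E)$.

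The verification then splits in two. The premodule relations --- functoriality and composition for pullback and transfer, the projection formula, compatibility of the connecting homomorphism with pullback, transfer and the $K^M_*$-action, and the computation of $\partial_v$ on $\{\pi\}\cdot\rho$ and $\{u\}\cdot\rho$ for a uniformizer $\pi$ and a unit $u$ --- are naturality and base-change statements in the Cisinski--D\'eglise formalism, proved as for locally constant coefficients in section \ref{definition}, with the birationality of $E$ supplying the coherence of the degree-$0$ conventions. Axioms FD and C would then be obtained by Rost's reductions: FD from purity (a class over the generic point extends to an open model, where all residues vanish), and C by reducing, via homotopy invariance of \etale motivic cohomology and the projective bundle formula, to a residue identity on $\mathbf{P}^1$. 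Alternatively, one could try to deduce the statement in one stroke from D\'eglise's equivalence between homotopy modules and cycle modules, by exhibiting $j\mapsto E(j)$ as the Tate-twist tower of a homotopy module.

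The step I expect to be the real obstacle --- and the reason the statement is only conjectural --- is the purity isomorphism $i^!E(a)\cong E(a-1)[-2]$ for a birational sheaf with transfers $E$ that need not be locally constant (so that Corollary \ref{Ecodim} does not apply), and, relatedly, the reciprocity axiom C: in the torsion case Rost establishes C by an honest computation after reduction to a curve, and it is not clear that the specialization and norm compatibilities needed for that computation persist for arbitrary birational coefficients without any torsion hypothesis. Establishing the projection-formula identity $E\otimes^L_{\Z}Rj_*\Z(a)_F\cong Rj_*E(a)_F$ in the required generality --- equivalently, a suitable form of Voevodsky's cancellation theorem for $E$-twists --- is where I would concentrate the effort.
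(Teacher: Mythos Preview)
This statement is a \emph{Conjecture} in the paper and is not proved there; there is no paper proof to compare against. Your proposal is accordingly not a proof but a strategy sketch, and you are candid about this in your final paragraph.

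Your outline --- verify Rost's premodule axioms using the six-functor formalism for \etale motives with $e$ inverted, then handle FD and C by Rost's standard reductions --- is a reasonable plan of attack, and your identification of purity $i^!E(a)\cong E(a-1)[-2]$ for a general birational sheaf with transfers (rather than a locally constant one) as the crux is plausible. The paper locates the difficulty slightly differently: it remarks that the relation between cycle modules and the derived category of motives, i.e.\ D\'eglise's equivalence between cycle modules and homotopy modules, has only been established over a field, not over a general base scheme $X$. That is precisely the obstruction to your ``alternative'' one-stroke approach via D\'eglise. So you and the paper are pointing at related but not identical gaps: you emphasize the purity and reciprocity inputs needed for a direct axiom-by-axiom verification, while the paper emphasizes the missing structural result over a base that would bypass such a verification entirely.

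In short, neither you nor the paper proves the conjecture; your sketch is a sensible outline of what a proof would require, and your identification of the obstacles is consistent with, and somewhat more specific than, the paper's own brief remark on the difficulty.
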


Here $\Z(j)$ denotes Voevodsky's motivic
cohomology complex, and $E(j):=E\otimes^{\tr}\Z(j)$. (The derived
tensor product is meant there, since $\Z(j)$ with $j\geq 0$ is a complex
of sheaves with transfer, not just a sheaf.
Indeed, $\Z(j)$ itself can be defined
as the derived tensor product of $j$ copies of $\Z(1)\cong G_m[-1]$,
$\Z(j)=\Z(1)\otimes^{\tr}\cdots\otimes^{\tr}\Z(1)$. Conjecture
\ref{cyclemodule} involves the \etale sheafification of these objects,
considered over fields. Part of the difficulty for the conjecture
is that the relation
between Rost's cycle modules and the derived category of motives
has only been worked out over a field, not over a more general
scheme $X$ \cite{Deglisegenerique, Deglisegeneric}.

Given the conjecture,
Rost's theory gives a definition of twisted Chow groups
$CH_i(X,E)$, meaning $A_i(X,H^*[E])_{-i}$ in Rost's notation. That is,
$CH_i(X,E)$ is defined as the cokernel of the residue homomorphism
$$\oplus_{x\in X_{(i+1)}}H^1_{\et}(k(x),E(1))
\to \oplus_{x\in X_{(i)}}H^0_{\et}(k(x),E).$$
When $X$ is smooth over $k$, we define $CH^i(X,E)$ likewise
in terms of codimension; so $CH^i(X,E)\cong CH_{n-i}(X,E)$
if $X$ is smooth of dimension $n$ everywhere.

One could define a different notion of twisted Chow groups
using the Nisnevich
rather than the \etale topology on fields; but that would be less interesting,
as it would always coincide with twisted motivic cohomology (in bidegree
$(2i,i)$). Our definition of twisted
Chow groups sits between twisted motivic cohomology
and twisted \etale cohomology, as follows. Given Conjecture
\ref{cyclemodule}, the proof is the same
as that of Corollary \ref{surjective} and Theorem \ref{etalecycle}.

\begin{lemma}
Let $X$ be a smooth scheme of finite type over a field $k$.
Assume Conjecture \ref{cyclemodule}. Then
for every birational Nisnevich sheaf with transfers $E$ over $X$,
we have natural homomorphisms
$$H^{2i}_{\M}(X,E(i))\to CH^i(X,E)\to H^{2i}_{\et}(X,E(i)),$$
the first of which is surjective.
\end{lemma}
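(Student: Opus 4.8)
The plan is to imitate the proofs of Corollary~\ref{surjective} and Theorem~\ref{etalecycle}(1), using Conjecture~\ref{cyclemodule} to know that $H^*[E]$ is a cycle module, so that $CH^i(X,E)$ makes sense as the cokernel of the residue map $\oplus_{x\in X^{(i-1)}}H^1_{\et}(k(x),E(1))\to\oplus_{x\in X^{(i)}}H^0_{\et}(k(x),E)$.

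First I would produce the surjection $H^{2i}_{\M}(X,E(i))\to CH^i(X,E)$, generalizing Lemma~\ref{overfield} and Corollary~\ref{higher}. Because $E_{-1}=0$, the object $E$ of $DM(X)$ coincides with its own $0$-slice $s_0E$ (cf.\ the proof of Lemma~\ref{overfield}), so Levine's homotopy coniveau tower identifies $H^{2i}_{\M}(X,E(i))$ with the degree-$0$ homology $CH^i(X,E,0)$ of a Bloch-type twisted cycle complex $z^i(X,E)$. The group of cycles of $z^i(X,E)$ is $\oplus_{x\in X^{(i)}}H^0_{\et}(k(x),E)$, which is exactly the group of generators of $CH^i(X,E)$ (note $H^0$ does not depend on the topology, and $E$ being birational makes its value at a residue field unambiguous), while each relation in $z^i(X,E)$ coming from the boundary of a $1$-simplex is a residue of a product of an element of $H^0(F,E)$ with an element of $F^*=H^1_{\et}(F,\Z(1))$, for a finite extension $F$ of a codimension-$(i-1)$ residue field, hence lies among the residues of $\oplus_{x\in X^{(i-1)}}H^1_{\et}(k(x),E(1))$. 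So the resulting map $H^{2i}_{\M}(X,E(i))=CH^i(X,E,0)\to CH^i(X,E)$ is well-defined and surjective.

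Next I would construct the cycle map $CH^i(X,E)\to H^{2i}_{\et}(X,E(i))$ verbatim as in the proof of Theorem~\ref{etalecycle}(1): on a generator given by a codimension-$r$ subvariety $Y\subset X$ and $u\in H^0(k(Y),E)$, shrink $X$ to remove the singular locus of $Y$ (of codimension $\geq r+1$), regard $u$ as an element of $H^0_{\et}(Y,E)$ using that $Y$ is now regular, hence normal, and push it forward along the Gysin maps $H^0_{\et}(Y,E)\to H^{2r}_{Y,\et}(X,E(r))\to H^{2r}_{\et}(X,E(r))$; then check vanishing on the residues of $H^1_{\et}(k(Y'),E(1))$ for codimension-$(r-1)$ subvarieties $Y'$, using the same localization sequences and divisor-wise computation as in Theorem~\ref{etalecycle}(1). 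This needs an analogue of Corollary~\ref{Ecodim} for the \'etale sheafification of $E$: the Gysin morphism $E(a-r)[-2r]\to\tau_{\leq a+r}i^!E(a)$ should be an isomorphism for $i\colon Y\hookrightarrow X$ a regular closed immersion of codimension $r$, and $\tau_{\leq 2a+1}i^!E(a)$ should vanish when $Y$ has codimension $>a$ in $X$. Granting this, the truncation arguments at the singular loci and the passage from the map on generators to well-definedness go through unchanged; naturality in $X$ and in $E$ is built into the construction, and the composite $H^{2i}_{\M}(X,E(i))\to H^{2i}_{\et}(X,E(i))$ is the map induced by \'etale sheafification.

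The hard part is exactly that purity statement replacing Corollary~\ref{Ecodim}. Since in this section $e$ acts invertibly on $E$, the natural route is to deduce it from the localization and purity of Cisinski and D\'eglise for $\Z[1/e](a)$ recalled in Section~\ref{definition}, by identifying $i^!E(a)$ with a shift and twist of $i^!\Z[1/e](a)$ assembled from $E$. The obstruction is that $i^!$ does not commute with $\otimes^{\tr}$ for general motives, so one must exploit the birational, weight-$0$ character of $E$ --- its description on regular schemes as generic ``unramified'' data --- to reduce to the known case, or else invoke a form of absolute purity for $h$-motives that applies to birational sheaves with transfers. This is entangled with the still-unestablished comparison between Rost's cycle modules and $DM(-)$ over a non-field base noted after Conjecture~\ref{cyclemodule}; everything else in the argument is formal.
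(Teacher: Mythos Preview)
Your proposal is essentially the paper's own argument: the paper's entire proof of this lemma is the one-line assertion that ``the proof is the same as that of Corollary~\ref{surjective} and Theorem~\ref{etalecycle},'' and you have carried out exactly that imitation. Your identification of the purity analogue of Corollary~\ref{Ecodim} as the one nontrivial ingredient is accurate, and your observation that the standing hypothesis in this section ($e$ invertible on $E$) lets one appeal to the Cisinski--D\'eglise six-functor formalism for $h$-motives (rather than the characteristic-$p$ arguments of Appendix~\ref{puritysection}) is the intended route; in fact this makes the approach of Theorem~\ref{etalecycle}(2), via \'etale Borel--Moore homology and its localization sequence, slightly more direct than the part~(1) argument you chose, since the localization/purity there is built into the formalism and requires no separate truncation estimate for $i^!E(a)$. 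Either way, the paper does not spell out this step, and your caveat that it is ``entangled'' with the base-change issues flagged after Conjecture~\ref{cyclemodule} is a fair reading of why the lemma is placed in a conjectural section rather than proved in full.
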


Neither map is an isomorphism, in general.
In the following examples, let $X$ be a smooth scheme over $k$.

\begin{itemize}
\item
Let $E$ be the constant sheaf $\Z_X$. Then both $H^{2i}_{\M}(X,\Z(i))$
and $CH^i(X,\Z_X)$ can be identified with the usual Chow group, $CH^i(X)$.
(For $CH^i(X,\Z_X)$, this follows from the definition by generators
and relations: $H^0_{\et}(k(x),\Z_X)\cong \Z$ and
$H^1_{\et}(k(x),\Z_X(1))\cong H^0_{\et}(k(x),G_m)\cong k(x)^*$.)
The homomorphism from $CH^i(X)$ to $H^{2i}_{\et}(X,\Z(i))$
is rationally an isomorphism, but not integrally, in general.
For example, there are smooth complex varieties $X$
with $CH^2(X)/l$ infinite for a prime number $l$ \cite{Schoen,
TotaroChow},
whereas $H^{4}_{\et}(X,\Z(2))/l$ is
contained in $H^4_{\et}(X,\Z/l(2))$,
which is finite.
\item
Let $A$ be an Azumaya algebra over $X$, and let $E=\Z^A$
be the Nisnevich sheaf over $X$ associated to $K_0^A$.
By Kahn--Levine and Elmanto--Nardin--Yakerson, this can also be described
as the subsheaf
of $\Z_X$ that is the image of the rank homomorphism $K_0^A\to \Z_X$
\cite[Lemma 2.17]{ENY}. Then the \etale sheafification
of $\Z^A$ is simply $\Z_X$, because the Azumaya algebra $A$ is \'etale-locally
trivial. So the homomorphism $H^{2i}_{\M}(X,\Z^A(i))\to CH^i(X,\Z^A)$
is a homomorphism to the usual Chow groups, $CH^iX$. This homomorphism
was considered by Kahn and Levine \cite[section 5.9]{KL}.
It is not always
an isomorphism, even for $i=0$. Namely, for a smooth variety $X$
over $k$, the image of $H^0_{\M}(X,\Z^A(0))\to CH^0(X)=\Z$
is a subgroup of index equal to the index of $A$
over the function field $k(X)$, which can be greater than 1.
\item
Let $E$ be a birational {\it \'etale }sheaf with transfers over $X$.
Then $E$ is in particular a birational Nisnevich sheaf
with transfers over $X$, and so the definitions above apply.
In this case, $H^{2i}_{\M}(X,E(i))\to CH^i(X,E)$ is surjective,
by inspection of the generators of $CH^i(X,E)$. One main result
of this paper is that this surjection need not be
an isomorphism, even for $i=1$.

In particular, let $E$ be a locally constant \etale sheaf
over $X$. (We only consider sheaves of abelian groups.)
Then $E$ has transfers in a natural way;
for $X=\Spec k$, this is \cite[Lemma 6.11]{MVW}.
So every locally constant \etale sheaf $E$
can be viewed as a birational \etale sheaf
with transfers. We have seen that $H^2_{\M}(X,E(1))\to CH^1(X,E)$
need not be an isomorphism, even in this special case
(Theorem \ref{counterexample}).
\end{itemize}


\small \sc UCLA Mathematics Department, Box 951555,
Los Angeles, CA 90095-1555

totaro@math.ucla.edu
\end{document}